\newtheorem{theorem}{Theorem}[section]
\newtheorem{proposition}[theorem]{Proposition}
\newtheorem{corollary}[theorem]{Corollary}
\newtheorem{lemma}[theorem]{Lemma}
\theoremstyle{definition}
\newtheorem{definition}[theorem]{Definition}
\newtheorem{example}[theorem]{Example}
\newtheorem{problem}[theorem]{Problem}
\newtheorem{remark}[theorem]{Remark}
\begin{document}

\title[MCNN Structure: Complexity Between Two Layers]{On the structure of multi-layer cellular neural networks: Complexity between two layers} %

\keywords{Sofic shift,  finite equivalence, Hausdorff dimension, measure of maximal entropy, hidden Markov measure}

\subjclass{Primary 34A33, 37B10; Secondary 11K55, 47A35}

\author{Jung-Chao Ban}
\address[Jung-Chao Ban]{Department of Applied Mathematics, National Dong Hwa University, Hualien 970003, Taiwan, ROC.}
\email{jcban@mail.ndhu.edu.tw}

\author{Chih-Hung Chang}
\address[Chih-Hung Chang]{Department of Applied Mathematics, National University of Kaohsiung, Kaohsiung 81148, Taiwan, ROC.}
\email{chchang@nuk.edu.tw}

\maketitle

% -------------------------------------------------------------
\begin{abstract}
Let $\mathbf{Y}$ be the solution space of an $n$-layer cellular neural network, and let $\mathbf{Y}^{(i)}$ and $\mathbf{Y}^{(j)}$ be the hidden spaces, where $1 \leq i, j \leq n$. ($\mathbf{Y}^{(n)}$ is called the output space.) The classification and the existence of factor maps between two hidden spaces, that reaches the same topological entropies, are investigated in [Ban et al., J.~Differential Equations \textbf{252}, 4563-4597, 2012]. This paper elucidates the existence of factor maps between those hidden spaces carrying distinct topological entropies. For either case, the Hausdorff dimension $\dim \mathbf{Y}^{(i)}$ and $\dim \mathbf{Y}^{(j)}$ can be calculated. Furthermore, the dimension of $\mathbf{Y}^{(i)}$ and $\mathbf{Y}^{(j)}$ are related upon the factor map between them.
\end{abstract}

% -------------------------------------------------------------
\section{Introduction}

Multi-layer cellular neural networks (MCNNs) are large aggregates of analogue circuits presenting themselves as arrays of identical cells which are locally coupled. MCNNs have been widely applied in studying the signal propagation between neurons, and in image processing, pattern recognition and information technology \cite{ABFM-ITCSIFTA1998,CR-2002,CY-ITCS1988a,CC-ITCS1995,CRC-ITCS1993,Li-NC2009,Mur-IJCM2010,PZL-FI2009,XYSV-ITCSIRP2004,YNU-ITF2002}. A One-dimensional MCNN is realized as
\begin{equation}\label{eq-general-system}
\frac{d x^{(\ell)}_i}{dt} = - x^{(\ell)}_i  + \sum_{|k| \leq d} a^{(\ell)}_k y^{(\ell)}_{i+k} + \sum_{|k| \leq d} b^{(\ell)}_k u^{(\ell)}_{i+k} + z^{(\ell)},
\end{equation}
for some $d \in \mathbb{N}, 1 \leq \ell \leq n \in \mathbb{N}, i \in \mathbb{Z}$, where
\begin{equation}
u^{(\ell)}_i = y^{(\ell-1)}_i \text{ for } 2 \leq \ell \leq n, \quad u^{(1)}_i = u_i, \quad x_i^{(\ell)}(0) = x_{i,0}^{(\ell)},
\end{equation}
and
\begin{equation}
y=f(x) = \frac{1}{2} (|x+1|-|x-1|)
\end{equation}
is the output function.

The stationary solutions $\bar{x} = (\bar{x}^{(\ell)}_i)$ of (\ref{eq-general-system}) are essential for understanding the system, and their outputs $\bar{y}^{(\ell)}_i = f(\bar{x}^{(\ell)}_i)$ are called \emph{output patterns}. A \emph{mosaic solution} $(\bar{x}^{(\ell)}_i)$ is a stationary solution satisfying $|\bar{x}^{(\ell)}_i| > 1$ for all $i, \ell$ and the output of a mosaic solution is called a mosaic output pattern. Mosaic solutions are crucial for studying the complexity of \eqref{eq-general-system} due to their asymptotical stability \cite{CM-ITCSIFTaA1995,CMV-IJBCASE1996,CMV-RCD1996,CS-SJAM1995,CCTS-IJBCASE1996,GBC-ITCSIRP2004,IC-IJBCASE2004,JL-SJAM2000,KM-IJBCASE2010,LH-AMAS2003,LS-IJBCASE1999,Shi-SJAM2000}. In a MCNN system, the ``status" of each cell is taken as an input for a cell in the next layer except for those cells in the last layer. The results that can be recorded are the output of the cells in the last layer. Since the phenomena that can be observed are only the output patterns of the $n$th layer, the $n$th layer of \eqref{eq-general-system} is called the \emph{output layer}, while the other $n-1$ layers are called \emph{hidden layers}.

We remark that, except from mosaic solutions exhibiting key features of MCNNs, mosaic solutions themselves are constrained by the so-called ``separation property'' (cf.~\cite{BC-IJBCASE2009, BC-NPL2014}). This makes the investigation more difficult. Furthermore, the output patterns of mosaic solutions of a MCNN can be treated as a cellular automaton. For the discussion of systems satisfying constraints and cellular automata, readers are referred to Wolfram's celebrated book \cite{Wol-2002}. (The discussion of constrained systems is referred to chapter 5.)

Suppose $\mathbf{Y}$ is the solution space of a MCNN. For $\ell = 1, 2, \ldots, n$, let
$$
\mathbf{Y}^{(\ell)} = \{\cdots y_{-1}^{(\ell)} y_0^{(\ell)} y_1^{(\ell)} \cdots\}
$$
be the space which consists of patterns in the $\ell$th layer of $\mathbf{Y}$, and let $\phi^{(\ell)}: \mathbf{Y} \to \mathbf{Y}^{(\ell)}$ be the projection map. Then $\mathbf{Y}^{(n)}$ is called the \emph{output space} and $\mathbf{Y}^{(\ell)}$ is called the ($\ell$th) \emph{hidden space} for $\ell = 1, 2, \ldots, n-1$. It is natural to ask whether there exists a relation between $\mathbf{Y}^{(i)}$ and $\mathbf{Y}^{(j)}$ for $1 \leq i \neq j \leq n$. Take $n = 2$ for instance; the existence of a map connecting $\mathbf{Y}^{(1)}$ and $\mathbf{Y}^{(2)}$ that commutes with $\phi^{(1)}$ and $\phi^{(2)}$ means the \emph{decoupling} of the solution space $\mathbf{Y}$. More precisely, if there exists $\pi_{12}: \mathbf{Y}^{(1)} \to \mathbf{Y}^{(2)}$ such that $\pi_{12} \circ \phi^{(1)} = \phi^{(2)}$, then $\pi_{12}$ enables the investigation of structures between the output space and hidden space. A serial work is contributed for this purpose.

At the very beginning, Ban \emph{et al.}~\cite{BCLL-JDE2009} demonstrated that the output space $\mathbf{Y}^{(n)}$ is topologically conjugated to a one-dimensional sofic shift. This result is differentiated from earlier research which indicated that the output space of a $1$-layer CNN without input is topologically conjugated to a Markov shift (also known as a shift of finite type). Some unsolved open problems, either on the mathematical or on the engineering side, have drawn interest since then. An analogous argument asserts that every hidden space $\mathbf{Y}^{(\ell)}$ is also topologically conjugated to a sofic shift for $1 \leq \ell \leq n-1$, and the solution space $\mathbf{Y}$ is topologically conjugated to a subshift of finite type. More than that, the topological entropy and dynamical zeta function of $\mathbf{Y}^{(\ell)}$ and $\mathbf{Y}$ are capable of calculation. A novel phenomenon, the asymmetry of topological entropy. It is known that a nonempty insertive and extractive language $\mathcal{L}$ is regular if and only if $\mathcal{L}$ is the language of a sofic shift; namely, $\mathcal{L} \subseteq \bigcup_{i \geq 0} B_i(X)$ for some sofic shift $X$, where
$$
B_i(X) = \{x_1 x_2 \cdots x_i: (x_k)_{k \in \mathbb{Z}} \in X\}.
$$
Therefore, elucidating sofic shifts is equivalent to the investigation of regular languages. Readers are referred to \cite{BP-2011} and the references therein for more details about the illustration of languages and sofic shifts.

Followed by \cite{BCL-JDE2012}, the classification of the hidden and output spaces is revealed for those spaces reaching the same topological entropy. Notably, the study of the existence of $\pi_{ij}: \mathbf{Y}^{(i)} \to \mathbf{Y}^{(j)}$ for some $i,j$ is equivalent to illustrating whether there is a map connecting two sofic shifts. Mostly it is difficult to demonstrate the existence of such maps. The authors have provided a systematic strategy for determining whether there exists a map between $\mathbf{Y}^{(i)}$ and $\mathbf{Y}^{(j)}$. More than that, the explicit expression of $\pi_{ij}$ is unveiled whenever there is a factor-like matrix $E$ (defined later).

The present paper, as a continuation of \cite{BCL-JDE2012, BCLL-JDE2009}, is devoted to investigating the Hausdorff dimension of the output and hidden spaces. We emphasize that, in this elucidation, those spaces need not attain the same topological entropy. In addition to examining the existence of maps between $\mathbf{Y}^{(i)}$ and $\mathbf{Y}^{(j)}$ (for the case where the topological entropies of two spaces are distinct), the complexity of the geometrical structure is discussed. The Hausdorff dimension of a specified space is an icon that unveils the geometrical structure and helps with the description of the complexity. This aim is the target of this study.

Furthermore, aside from the existence of factor maps between $\mathbf{Y}^{(i)}$ and $\mathbf{Y}^{(j)}$, the correspondence of the Hausdorff dimension is of interest to this study. Suppose there exists a factor map $\pi_{ij}: \mathbf{Y}^{(i)} \to \mathbf{Y}^{(j)}$, the Hausdorff dimension of $\mathbf{Y}^{(i)}$ and $\mathbf{Y}^{(j)}$ are related under some additional conditions (see Theorems \ref{main-thm-FSE} and \ref{main-thm-ITO}). More explicitly, it is now known that in many examples the calculation of the Hausdorff dimension of a set is closely related to the maximal measures (defined later) of its corresponding symbolic dynamical system (cf.~\cite[Theorem 13.1]{P-1997} for instance). Theorems \ref{main-thm-FSE} and \ref{main-thm-ITO} also indicate that the Hausdorff dimension of $\mathbf{Y}^{(j)}$ is the quotient of the measure-theoretic entropy $h_{\pi_{ij} \nu^{(i)}}(\mathbf{Y}^{(j)})$ and the metric of $\mathbf{Y}^{(j)}$, where $\nu^{(i)}$ is the maximal measure of $\mathbf{Y}^{(i)}$. Notably, such a result relies on whether the push-forward measure $\pi_{ij} \nu^{(i)}$ of $\nu^{(i)}$ under the factor map $\pi_{ij}$ remains a maximal measure. We propose a methodology so that all the conditions are checkable, and the Hausdorff dimension $\dim \mathbf{Y}^{(\ell)}$ can be formulated accurately for $1 \leq \ell \leq n$.

\begin{figure}
\begin{center}
\includegraphics[scale=0.7]{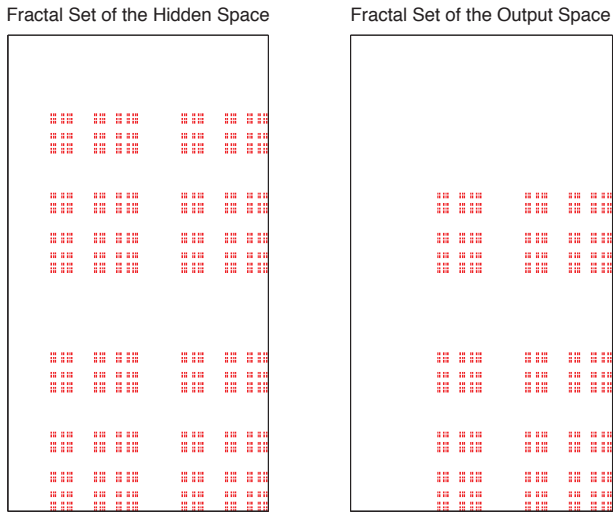}
\end{center}
\caption{The fractal sets of the hidden and output spaces of a MCNN with templates $[a^{(1)}, a_r^{(1)}, z^{(1)}] = [2.9, 1.7, 0.1]$ and $[a^{(2)}, a_r^{(2)}, b^{(2)}, b_r^{(2)}, z^{(2)}] = [-0.3, -1.2, 0.7, 2.3, 0.9]$. Each fractal set is a subspace of  $[0, 1] \times [0, 1]$, and is derived from the expansion $\Phi^{(i)}(x) = (\Sigma_{k \geq 0} \frac{x_k}{m_i^{k+1}}, \Sigma_{k \leq 0} \frac{x_k}{m_i^{|k|+1}})$ for $x \in \mathbf{Y}^{(i)}$, $m_i = |\mathcal{A}(\mathbf{Y}^{(i)})|$, and $i = 1, 2$. It is seen that the expansion map $\Phi^{(i)}: \mathbf{Y}^{(i)} \to [0, 1] \times [0, 1]$ is one-to-one almost everywhere, and hence does not make an impact on the discussion of the Hausdorff dimension. The figures come from repeating $9$ operations based on the basic set of admissible local patterns. See Example \ref{eg-Y1Y2-SFT} for more discussion.}
\label{fig-551429}
\end{figure}

Figure \ref{fig-551429} illustrates the fractal sets of the hidden and output spaces (namely, $\mathbf{Y}^{(1)}$ and $\mathbf{Y}^{(2)}$) of a two-layer CNN. It is seen that $\mathbf{Y}^{(1)}$ and $\mathbf{Y}^{(2)}$ are entirely two different spaces. Aside from calculating the Hausdorff dimension of these spaces, it is interesting to investigate whether there is a map connecting $\mathbf{Y}^{(1)}$ and $\mathbf{Y}^{(2)}$, and how $\dim \mathbf{Y}^{(1)}$ is related to $\dim \mathbf{Y}^{(2)}$. See Example \ref{eg-Y1Y2-SFT} for more details.

In the mean time, we want to mention some further issues that are related to our elucidation and which have caused widespread attention recently. One of them is the investigation of the so-called \emph{sofic measure} or \emph{hidden Markov measure}. Let $\mu$ be a Markov measure on $\mathbf{Y}$. The push-forward measure $\phi^{(\ell)} \mu$, defined by $(\phi^{(\ell)} \mu) (\mathcal{O}) = \mu ((\phi^{(\ell)})^{-1} \mathcal{O})$ for all Borel set $\mathcal{O}$ in $\mathbf{Y}^{(\ell)}$, is called a sofic measure or a hidden Markov measure. There have been piles of papers about sofic measures written in the past decades. A concerned question is under what condition the push-forward measure of a Markov measure remains a Markov measure. To be more specific, we are interested in which properties a sofic measure would satisfy. This elucidation focuses on the study of the measures on the hidden/output space. Recalling that the hidden/output space is a factor of the solution space, it follows that the investigation of the  measures on the hidden/output space is equivalent to the investigation of sofic measures. We propose a methodology to verify when a sofic measure is reduced to be Markov. In this case, the explicit form of a maximal measure and the Hausdorff dimension of the hidden/output space are formulated. For more discussion of the hidden Markov measures, the reader is referred to \cite{BCC-2012,BP-2011,KT-MAMS1985} and the references therein.

It is known that the tiling problem is undecidable. As an application, it is of interest to investigate the decidability of the language of a sofic shift which can be realized as a hidden or output space of a MCNN. The related work is in preparation.

The rest of this investigation is organized as follows. A brief recall of \cite{BCL-JDE2012, BCLL-JDE2009} and some definitions and notations are given in Section 2. The main theorems (Theorems \ref{main-thm-FSE} and \ref{main-thm-ITO}) for $2$-layer CNNs are also stated therein. Section 3 analyzes the existence of factor maps that connect two spaces and the hidden Markov measures. The proofs of the main theorems are illustrated there. Some examples are given in Section 4. We generalize Theorems \ref{main-thm-FSE} and \ref{main-thm-ITO} to general MCNNs in Section 5. Figure \ref{fig-flow-chart} provides the flow chart of the present investigation. Section 6 is saved for the conclusion and further problems.

% -------------------------------------------------------------
\section{Main Results and Preliminaries}

Due to this paper being a continuation of \cite{BCL-JDE2012}, the upcoming section intends to give a brief review of \cite{BCL-JDE2012} and illustrates the main results of our study. For the self-containment of the present investigation, we recall some definitions and known results for symbolic dynamical systems and MCNNs. The reader is referred to \cite{BCL-JDE2012, BCLL-JDE2009, LM-1995} and the references therein for more details.

% -------------------------------------------------------------
\subsection{Multi-layer Cellular Neural Networks}

Since an elucidation of two-layer CNNs is essential for the study of MCNNs, we refer MCNNs to two-layer CNNs and focus on them in the rest of this paper unless otherwise stated. A two-layer cellular neural network is realized as
\begin{equation}\label{eq-2layer-mcnn}
\left\{
\begin{split}
\frac{d x_i^{(1)}}{dt} &= - x_i^{(1)} + \sum_{|k| \leq d} a^{(1)}_k y_{i+k}^{(1)} + \sum_{|\ell| \leq d} b^{(1)}_{\ell} u_{i+\ell}^{(1)}  + z^{(1)}, \\
\frac{d x_i^{(2)}}{dt} &= - x_i^{(2)} + \sum_{|k| \leq d} a^{(2)}_k y_{i+k}^{(2)} + \sum_{|\ell| \leq d} b^{(2)}_{\ell} u_{i+\ell}^{(2)} + z^{(2)},
\end{split}
\right.
\end{equation}
for some $d \in \mathbb{N}$, and $u_{i}^{(2)} = y_{i}^{(1)}$ for $i \in \mathbb{Z}$; $\mathbb{N}$ denotes the positive integers and $\mathbb{Z}$ denotes the integers. The prototype of \eqref{eq-2layer-mcnn} is
$$
\frac{d x_i}{dt} = -x_i + \sum_{|k| \leq d} a_k y_{i+k} + \sum_{|\ell| \leq d} b_{\ell} u_{i+\ell} + z.
$$
Here $A = [-a_d, \cdots, a_d], B = [-b_d, \cdots, b_d]$ are the \emph{feedback} and \emph{controlling templates}, respectively. $z$ is the \emph{threshold}, and $y_i = f(x_i) = \frac{1}{2} (|x_i+1| - |x_i-1|)$ is the output of $x_i$. The quantity $x_i$ represents the state of the cell at $i$ for $i \in \mathbb{Z}$. The output of a stationary solution $\bar{x} = (\bar{x}_i)_{i \in \mathbb{Z}}$ is called a output pattern. A \emph{mosaic solution} $\bar{x}$ satisfies $|\bar{x}_i| > 1$ and its corresponding pattern $\bar{y}$ is called a \emph{mosaic output pattern}. Consider the mosaic solution $\bar{x}$, the necessary and sufficient condition for state ``$+$" at cell $C_i$, i.e., $\bar{y}_i = 1$, is
\begin{equation}\label{eq-cnn-state+}
a - 1 + z > -(\sum_{0 < |k| \leq d} a_k \bar{y}_{i+k} + \sum_{|\ell| \leq d} b_{\ell} u_{i+\ell}),
\end{equation}
where $a = a_0$. Similarly, the necessary and sufficient conditions for state ``$-$" at cell $C_i$, i.e., $\bar{y}_i = -1$, is
\begin{equation}\label{eq-cnn-state-}
a - 1 - z > \sum_{0 < |k| \leq d} a_k \bar{y}_{i+k} + \sum_{|\ell| \leq d} b_{\ell} u_{i+\ell}.
\end{equation}

For simplicity, denoting $\bar{y}_i$ by $y_i$ and rewriting the output patterns $y_{-d} \cdots y_0 \cdots y_d$ coupled with input $u_{-d} \cdots u_0 \cdots u_d$ as
\begin{equation}
\boxed{y_{-d} \cdots y_{-1} y_0 y_1 \cdots y_d \atop \displaystyle u_{-d} \cdots u_{-1} u_0 u_1 \cdots u_d} \equiv y_{-d} \cdots y_d \diamond u_{-d} \cdots u_d \in \{-1, 1\}^{\mathbb{Z}_{(2d+1)\times 2}}.
\end{equation}
Let
$$
V^n = \{ v \in \mathbb{R}^n : v = (v_1, v_2, \cdots, v_n), \text{ and } |v_i| = 1, 1 \leq i \leq n \},
$$
where $n = 4d+1$, \eqref{eq-cnn-state+} and \eqref{eq-cnn-state-} can be rewritten in a compact form by introducing the following notation.

Denote $\alpha = (a_{-d}, \cdots, a_{-1}, a_1, \cdots, a_d)$, $\beta = (b_{-d}, \cdots, b_d)$. Then, $\alpha$ can be used to represent $A'$, the surrounding template of $A$ without center, and $\beta$ can be used to represent the template $B$. The basic set of admissible local patterns with ``$+$" state in the center is defined as
\begin{equation}
\mathcal{B}(+, A, B, z) = \{v \diamond w \in V^n : a - 1 + z > -(\alpha \cdot v + \beta \cdot w) \},
\end{equation}
where ``$\cdot$" is the inner product in Euclidean space. Similarly, the basic set of admissible local patterns with ``$-$" state in the center is defined as
\begin{equation}
\mathcal{B}(-, A, B, z) = \{v' \diamond w' \in V^n : a - 1 - z > \alpha \cdot v + \beta \cdot w \}.
\end{equation}
Furthermore, the admissible local patterns induced by $(A, B, z)$ can be denoted by
\begin{equation}
\mathcal{B}(A, B, z) = (\mathcal{B}(+, A, B, z), \mathcal{B}(-, A, B, z)).
\end{equation}
It is shown that the parameter space can be partitioned into finite equivalent subregions, that is, two sets of parameters induce identical basic sets of admissible local patterns if they belong to the same partition in the parameter space. Moreover, the parameter space of a MCNN is also partitioned into finite equivalent subregions \cite{BCLL-JDE2009}.

Suppose a partition of the parameter space is determined, that is, the templates
$$
A^{(\ell)} = [a^{(\ell)}_{-d}, \cdots, a^{(\ell)}_{d}], \quad B^{(\ell)} = [b^{(\ell)}_{-d}, \cdots, b^{(\ell)}_{d}], \quad z^{(\ell)} \qquad \ell = 1, 2
$$
are given. A stationary solution
$
\mathbf{x} = \mathbf{x}^{(2)} \diamond \mathbf{x}^{(1)}
= \begin{pmatrix}
    x^{(2)}_i \\
    x^{(1)}_i \\
  \end{pmatrix}_{i \in \mathbb{Z}}
$
is called mosaic if $|x^{(\ell)}_i| > 1$ for $\ell = 1, 2$ and $i \in \mathbb{Z}$. The output
$
\mathbf{y} = \mathbf{y}^{(2)} \diamond \mathbf{y}^{(1)}
= \begin{pmatrix}
    y^{(2)}_i \\
    y^{(1)}_i \\
  \end{pmatrix}_{i \in \mathbb{Z}}
$
of a mosaic solution $\mathbf{x}$ is called a mosaic pattern.

Suppose $\mathcal{B}$ is the basic set of admissible local patterns of a MCNN. Since \eqref{eq-2layer-mcnn} is spatial homogeneous, that is, the templates of \eqref{eq-2layer-mcnn} are fixed for each cell, the \emph{solution space} $\mathbf{Y} \subseteq \{-1, 1\}^{\mathbb{Z}_{\infty \times 2}}$ is determined by $\mathcal{B}$ as
$$
\mathbf{Y}= \left\{\mathbf{y}^{(2)} \diamond \mathbf{y}^{(1)}: y^{(2)}_{i-d} \cdots y^{(2)}_i \cdots y^{(2)}_{i+d} \diamond
y^{(1)}_{i-d} \cdots y^{(1)}_i \cdots y^{(1)}_{i+d} \in \mathcal{B} \text{ for } i \in \mathbb{Z}\right\}.
$$
Moreover, the \emph{output space} $\mathbf{Y}^{(2)}$ and the \emph{hidden space} $\mathbf{Y}^{(1)}$ are defined by
$$
\mathbf{Y}^{(2)} = \left\{\mathbf{y} \in \{-1, 1\}^{\mathbb{Z}} : \mathbf{y} \diamond \mathbf{u} \in \mathbf{Y} \text{ for some } \mathbf{u}\right\}
$$
and
$$
\mathbf{Y}^{(1)} = \left\{\mathbf{u} \in \{-1, 1\}^{\mathbb{Z}} : \mathbf{y} \diamond \mathbf{u} \in \mathbf{Y} \text{ for some } \mathbf{y}\right\}
$$
respectively. In \cite{BCL-JDE2012, BCLL-JDE2009}, the authors demonstrated that $\mathbf{Y}$ is a \emph{shift of finite type} (SFT) and $\mathbf{Y}^{(1)}, \mathbf{Y}^{(2)}$ are both sofic shifts. In general, for $i = 1, 2$, the factor $\phi^{(i)}: \mathbf{Y} \to \mathbf{Y}^{(i)}$ is not even a finite-to-one surjective map. Furthermore, for $i = 1, 2$, there is a \emph{covering space} $W^{(i)}$ of $\mathbf{Y}^{(i)}$ and a finite-to-one factor $\phi^{(i)}: W^{(i)} \to \mathbf{Y}^{(i)}$ with $W^{(i)}$ being a SFT. (We abuse the finite-to-one factor $\phi^{(i)}$ rather than $\widehat{\phi}^{(i)}$ to ease the use of notation.) For a topological space $Y$, we say that $X$ is a covering space of $Y$ if there exists a continuous onto map $\phi: X \to Y$ which is locally homeomorphic.

A quantity that describes the complexity of a system is \emph{topological entropy}. Suppose $X$ is a shift space. Denote $\Gamma_k(X)$ the cardinality of the collection of words of length $k$. The topological entropy of $X$ is then defined by
$$
h(X) = \lim_{k \to \infty} \frac{\log \Gamma_k(X)}{k}.
$$
Whenever the hidden space $\mathbf{Y}^{(1)}$ and the output space $\mathbf{Y}^{(2)}$ reach the same topological entropy, $\mathbf{Y}^{(1)}$ and $\mathbf{Y}^{(2)}$ are \emph{finite shift equivalent} (FSE) \cite{BCL-JDE2012}. Herein two spaces $X$ and $Y$ are FSE if there is a triple $(Z, \phi_X, \phi_Y)$ such that $Z$ is a SFT and $\phi_X: Z \to X, \phi_Y: Z \to Y$ are both finite-to-one factors. Ban et al.\ \cite{BCL-JDE2012} asserted that the existence of a \emph{factor-like} matrix helps in determining whether or not there is a map between $\mathbf{Y}^{(1)}$ and $\mathbf{Y}^{(2)}$. A nonnegative $m \times n$ integral matrix $E$ is called factor-like if, for each fixed row, the summation of all entries is equal to $1$.

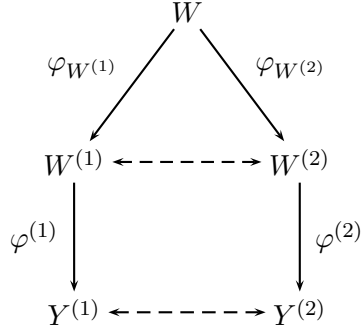
\begin{figure}
\begin{center}
\begin{pspicture}(3,4)
\psset{nodesep=0.1cm}
\rput(1.5,4){\rnode{A}{$W$}}
\rput(0,2){\rnode{B}{$W^{(1)}$}}\rput(3,2){\rnode{C}{$W^{(2)}$}}
\rput(0,0){\rnode{D}{$Y^{(1)}$}}\rput(3,0){\rnode{E}{$Y^{(2)}$}}
\ncline{->}{A}{B}\Bput{$\varphi_{W^{(1)}}$}\ncline{->}{A}{C}\Aput{$\varphi_{W^{(2)}}$}\ncline[linestyle=dashed]{<->}{B}{C}
\ncline{->}{B}{D}\Bput{$\varphi^{(1)}$}\ncline{->}{C}{E}\Aput{$\varphi^{(2)}$}
\ncline[linestyle=dashed]{<->}{D}{E}
\end{pspicture}
\end{center}
\caption{For the case that $h(\mathbf{Y}^{(1)}) = h(\mathbf{Y}^{(2)})$, we get a trianglular structure. Whether the dash lines can be replaced by solid lines infers whether the structure of the hidden and output spaces are related.}
\label{fig-Y1-Y2-triangle}
\end{figure}

\begin{proposition}[See {\cite[Proposition 3.15, Theorem 3.17]{BCL-JDE2012}}]\label{prop-factor-like-embed}
Let $T^{(i)}$ be the transition matrix of $W^{(i)}$ for $i = 1, 2$. Suppose $E$ is a factor-like matrix such that $T^{(i)}E = E T^{(\overline{i})}$, then there is a map $\pi: W^{(i)} \to W^{(\overline{i})}$ which preserves topological entropy, where $i + \overline{i} = 3$. Furthermore, if $\phi^{(i)}$ is a conjugacy, then there is a map $\overline{\pi}: \mathbf{Y}^{(i)} \to \mathbf{Y}^{(\overline{i})}$ which preserves topological entropy.
\end{proposition}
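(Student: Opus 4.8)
The plan is to read the factor-like matrix $E$ as a map on vertices and to promote it, via the intertwining relation, to a one-block sliding block code; the second assertion then follows by conjugating through the factor maps $\phi^{(i)}$ and $\phi^{(\overline{i})}$. Write $T^{(i)}$ as an $m\times m$ matrix and $T^{(\overline{i})}$ as an $n\times n$ matrix, so that $E$ is $m\times n$. Because $E$ is nonnegative, integral, and each row sums to $1$, every row of $E$ has exactly one nonzero entry, equal to $1$; hence $E$ is the $0$--$1$ incidence matrix of a vertex map $\varepsilon\colon\{1,\dots,m\}\to\{1,\dots,n\}$, with $E_{ab}=1$ iff $\varepsilon(a)=b$. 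I regard $W^{(i)}$ and $W^{(\overline{i})}$ as the SFTs presented by the graphs with adjacency matrices $T^{(i)}$ and $T^{(\overline{i})}$ on these vertex sets.

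First I would unpack $T^{(i)}E=ET^{(\overline{i})}$ entrywise. The $(a,b)$ entry gives
\[
\sum_{c\,:\,\varepsilon(c)=b} T^{(i)}_{ac} \;=\; T^{(\overline{i})}_{\varepsilon(a),\,b}
\]
for all $a,b$. In words: for each vertex $a$ of $W^{(i)}$ the out-edges from $a$ whose terminal vertex is carried by $\varepsilon$ onto $b$ are in bijection with the out-edges from $\varepsilon(a)$ to $b$ in $W^{(\overline{i})}$. Two consequences follow. On one hand, taking $b=\varepsilon(a')$ shows that whenever $T^{(i)}_{aa'}\geq 1$ we have $T^{(\overline{i})}_{\varepsilon(a)\varepsilon(a')}\geq 1$, so $\varepsilon$ carries admissible paths of $W^{(i)}$ to admissible paths of $W^{(\overline{i})}$; applying $\varepsilon$ coordinatewise therefore defines a continuous, shift-commuting one-block code $\pi\colon W^{(i)}\to W^{(\overline{i})}$. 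On the other hand, the identity says precisely that $\pi$ is right-covering: the edges out of $a$ map bijectively onto the edges out of $\varepsilon(a)$. This right-covering property is what I would use to lift any bi-infinite path of $W^{(\overline{i})}$ (forward from a chosen preimage vertex), giving surjectivity of $\pi$, and to control fibres so that $\pi$ is finite-to-one.

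For the entropy I would argue through the Perron value. Let $\lambda=\lambda(T^{(\overline{i})})$ be the spectral radius of $T^{(\overline{i})}$ and $w>0$ a corresponding (Perron) right eigenvector, $T^{(\overline{i})}w=\lambda w$. Then $T^{(i)}(Ew)=ET^{(\overline{i})}w=\lambda(Ew)$, while $(Ew)_a=w_{\varepsilon(a)}>0$, so $Ew$ is a strictly positive eigenvector of the nonnegative matrix $T^{(i)}$ with eigenvalue $\lambda$. By the Collatz--Wielandt characterization of the spectral radius, a nonnegative matrix possessing a strictly positive eigenvector has that eigenvalue equal to its spectral radius; hence $\lambda(T^{(i)})=\lambda$ and
\[
h\big(W^{(i)}\big)=\log\lambda\big(T^{(i)}\big)=\log\lambda=h\big(W^{(\overline{i})}\big).
\]
Combined with the fact that a finite-to-one factor code between SFTs preserves topological entropy, this shows $\pi$ preserves entropy. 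The second assertion is then immediate: if $\phi^{(i)}\colon W^{(i)}\to\mathbf{Y}^{(i)}$ is a conjugacy, set $\overline{\pi}=\phi^{(\overline{i})}\circ\pi\circ(\phi^{(i)})^{-1}\colon\mathbf{Y}^{(i)}\to\mathbf{Y}^{(\overline{i})}$. This is a composition of sliding block codes, hence a sliding block code; and since a conjugacy (here $(\phi^{(i)})^{-1}$), the entropy-preserving map $\pi$, and the finite-to-one factor $\phi^{(\overline{i})}$ each preserve topological entropy, so does $\overline{\pi}$.

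I expect the main obstacle to be the rigorous verification of the two structural properties of $\pi$ hidden inside the single matrix identity, namely surjectivity and finite-to-oneness, especially if the covering SFTs $W^{(i)}$ fail to be irreducible: the lifting argument and the fibre-counting must then be carried out on the essential (recurrent) part of each graph, and one must check that $\varepsilon$ is surjective onto the relevant vertices, or else restrict the codomain accordingly. The entropy computation itself is robust — the Perron/Collatz--Wielandt step needs no irreducibility — so the delicate point is purely the combinatorial bookkeeping that upgrades $T^{(i)}E=ET^{(\overline{i})}$ into a genuine finite-to-one factor map rather than merely an entropy coincidence.
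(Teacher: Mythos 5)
The paper supplies no proof of Proposition \ref{prop-factor-like-embed} at all: it is imported verbatim from \cite{BCL-JDE2012}, so your attempt can only be judged on its own merits. On those merits the main line is sound and is surely the intended one: read the factor-like matrix $E$ as a vertex map $\varepsilon$, extract from the entrywise form of $T^{(i)}E = ET^{(\overline{i})}$ both the admissibility of $\varepsilon$-images (giving the one-block code $\pi$) and the right-covering property (giving finite-to-one-ness and path lifting), then transport through $\overline{\pi} = \phi^{(\overline{i})} \circ \pi \circ (\phi^{(i)})^{-1}$ for the second assertion. Moreover, the obstacle you flag at the end largely dissolves once irreducibility is in force: taking $b$ outside the range of $\varepsilon$ in the identity $\sum_{c:\varepsilon(c)=b} T^{(i)}_{ac} = T^{(\overline{i})}_{\varepsilon(a),b}$ shows that the range of $\varepsilon$ is forward-invariant in the graph of $T^{(\overline{i})}$, so irreducibility of $T^{(\overline{i})}$ forces $\varepsilon$ to be onto; surjectivity of $\pi$ then follows either from your lifting-plus-compactness argument or, more cheaply, from the fact that a proper subshift of an irreducible SFT has strictly smaller entropy.

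The one genuine flaw is that your closing paragraph has the role of irreducibility exactly backwards, and as written this leaves a hole in the entropy step. It is the combinatorial part (one-block code, right-covering, finite fibres) that needs no irreducibility; the Perron/Collatz--Wielandt step is precisely where irreducibility of $T^{(\overline{i})}$ enters, because without it there need not exist a strictly positive right eigenvector $w$, and then $Ew$ can fail to be positive---it can even vanish identically---and the conclusion is simply false. Concretely, take
\[
T^{(i)} = \begin{pmatrix} 1 \end{pmatrix}, \qquad
T^{(\overline{i})} = \begin{pmatrix} 1 & 0 & 0 \\ 0 & 1 & 1 \\ 0 & 1 & 1 \end{pmatrix}, \qquad
E = \begin{pmatrix} 1 & 0 & 0 \end{pmatrix}.
\]
Then $E$ is factor-like and $T^{(i)}E = ET^{(\overline{i})} = \begin{pmatrix} 1 & 0 & 0 \end{pmatrix}$, yet the corresponding entropies are $0$ and $\log 2$: no entropy equality can be extracted from the matrix identity alone. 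So the proposition tacitly relies on the irreducibility that this paper carries as a standing hypothesis on the covering spaces $W^{(i)}$ (as in Theorems \ref{main-thm-FSE} and \ref{main-thm-ITO}), and a correct proof must invoke it exactly in the entropy step: irreducibility of $T^{(\overline{i})}$ yields $w > 0$, hence $Ew > 0$, hence $\rho(T^{(i)}) = \rho(T^{(\overline{i})})$ by Collatz--Wielandt, the last implication being the only part that is hypothesis-free. With the dependency repositioned in this way, your argument is complete.
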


Proposition \ref{prop-factor-like-embed} infers a criterion for the existence of maps between $W^{(1)}, W^{(2)}$ and $\mathbf{Y}^{(1)}, \mathbf{Y}^{(2)}$ when $\mathbf{Y}^{(1)}$ and $\mathbf{Y}^{(2)}$ are FSE. Figure \ref{fig-Y1-Y2-triangle} illustrates a triangular structure between $W^{(i)}$ and $\mathbf{Y}^{(i)}$. The structures of the hidden and output spaces are related if the dash lines can be replaced by solid lines. Some natural questions follow immediately.

\begin{problem}\label{prob-dim-W1W2}
Suppose $\pi: W^{(i)} \to W^{(\overline{i})}$ exists.
\begin{enumerate}[a.]
\item Let $\mu$ be a Markov measure on $W^{(i)}$. Is $\pi \mu$ a Markov measure on $W^{(\overline{i})}$, where $\pi \mu := \mu \circ \pi^{-1}$ is the push-forward measure of $\mu$?

\item Suppose $\pi$ is surjective. For each Markov measure $\mu'$ on $W^{(\overline{i})}$, does there exist a $\mu$ on $W^{(i)}$ such that $\pi \mu = \mu'$?

\item How is the Hausdorff dimension $\dim W^{(\overline{i})}$ related to the Hausdorff dimension $\dim W^{(i)}$?
\end{enumerate}
\end{problem}

Problem \ref{prob-dim-W1W2} considers whether or not a topological map connects the measures and the Hausdorff dimension of two spaces. Notably, $W^{(1)}, W^{(2)}$ are topological Markov chains. It is getting more complicated when investigating the hidden and output spaces.

\begin{problem}\label{prob-dim-Y1Y2}
Suppose $\overline{\pi}: \mathbf{Y}^{(i)} \to \mathbf{Y}^{(\overline{i})}$ exists.
\begin{enumerate}[a.]
\item Let $\nu$ be a maximal measure on $\mathbf{Y}^{(i)}$. Is $\overline{\pi} \nu$ a maximal measure on $\mathbf{Y}^{(\overline{i})}$?

\item Suppose $\overline{\pi}$ is surjective. For each Markov measure $\nu'$ on $\mathbf{Y}^{(\overline{i})}$, does there exist a $\nu$ on $\mathbf{Y}^{(i)}$ such that $\overline{\pi} \nu = \nu'$?

\item Is the Hausdorff dimension $\dim \mathbf{Y}^{(\overline{i})}$ related to the Hausdorff dimension $\dim \mathbf{Y}^{(i)}$?
\end{enumerate}
\end{problem}

% -------------------------------------------------------------
\subsection{Shift Spaces and Hausdorff Dimension}

In this subsection, we recall some definitions and properties of shift spaces and the Hausdorff dimension for the reader's convenience. The detailed information is referred to in \cite{LM-1995, P-1997}. Let $\mathcal{A}$ be a finite set with cardinality $|\mathcal{A}| = n$, which we consider to be an alphabet of symbols. Without the loss of generality, we usually take $\mathcal{A} = \{0, 1, \ldots, n-1\}$. The full $\mathcal{A}$-shift $\mathcal{A}^{\mathbb{Z}}$ is the collection of all bi-infinite sequences with entries from $\mathcal{A}$. More precisely,
$$
\mathcal{A}^{\mathbb{Z}} = \{\alpha = (\alpha_i)_{i \in \mathbb{Z}}: \alpha_i \in \mathcal{A} \text{ for all } i \in \mathbb{Z}\}.
$$
The shift map $\sigma$ on the full shift $\mathcal{A}^{\mathbb{Z}}$ is defined by
$$
\sigma(\alpha)_i = \alpha_{i+1} \quad \text{for} \quad i \in \mathbb{Z}.
$$
A \emph{shift space} $X$ is a subset of $\mathcal{A}^{\mathbb{Z}}$ such that $\sigma(X) \subseteq X$. $\mathcal{A}^{\mathbb{Z}}$ is a compact metric space endowed with the metric
$$
d(x, y) = \sum_{i \in \mathbb{Z}} \frac{|x_i - y_i|}{n^{|i|+1}}, \quad x, y \in \mathcal{A}^{\mathbb{Z}}.
$$

Two specific types of shift spaces that are related to our investigation are subshifts of finite type and sofic shifts. First we introduce the former. For each $k \in \mathbb{N}$, let
$$
\mathcal{A}_k = \{w_0 w_1 \cdots w_{k-1}: w_i \in \mathcal{A}, 0 \leq i \leq k-1\}
$$
denote the collection of words of length $k$ and let $\mathcal{A}_0$ denote the empty set. A \emph{cylinder} $I \subset \mathcal{A}^{\mathbb{Z}}$ is
$$
I = \{x \in \mathcal{A}^{\mathbb{Z}}: x_i x_{i+1} \cdots x_{i+k-1} = \omega_0 \omega_1 \cdots \omega_{k-1}\},
$$
for some $i \in \mathbb{Z}, k \in \mathbb{N}$, and $\omega_0 \omega_1 \cdots \omega_{k-1} \in \mathcal{A}_k$. (Sometimes we also write $I = [\omega_0, \omega_1, \cdots, \omega_{k-1}]$.) If $X$ is a shift space and there exists $L \geq 0$ and $\mathcal{F} \subseteq \cup_{0 \leq k \leq L} \mathcal{A}_k$ such that
$$
X = \{(\alpha_i)_{i \in \mathbb{Z}}: \alpha_i \alpha_{i+1} \cdots \alpha_{i+k-1} \notin \mathcal{F} \text{ for } k \leq L, i \in \mathbb{Z}\}
$$
then we say that $X$ is a SFT. The SFT is \emph{$L$-step} if words in $\mathcal{F}$ have length at most $L+1$.

Notably, it is known that, without the loss of generality, SFTs can be defined by $0, 1$ transition matrices. For instance, let $T$ be an $n \times n$ matrix with rows and columns indexed by $\mathcal{A}$ and entries from $\{0, 1\}$. Then
$$
X = \{x \in \mathcal{A}^{\mathbb{Z}}: T(x_i, x_{i+1}) = 1 \text{ for all } i \in \mathbb{Z}\}
$$
is a one-step SFT. (It is also known as a \emph{topological Markov chain} by Parry.) A topological Markov chain is called irreducible/mixing if its transition matrix is irreducible/mixing.

An extended concept of SFTs is called \emph{sofic shifts}. A sofic shift is a subshift which is the image of a SFT under a factor map. Suppose $X$ and $Y$ are two shift spaces. A \emph{factor map} is a continuous onto map $\pi: X \to Y$ such that $\pi \circ \sigma_{X} = \sigma_{Y} \circ \pi$. A one-to-one factor map is called a topological conjugacy. A sofic shift is \emph{irreducible} if it is the image of an irreducible SFT.

In the previous subsection we mentioned that the topological entropy illustrates the complexity of the topological behavior of a system. Aside from the topological entropy, the Hausdorff dimension characterizes its geometrical structure. The concept of the Hausdorff dimension generalizes the notion of the dimension of a real vector space and helps to distinguish the difference of measure zero sets. We recall the definition of the Hausdorff dimension for reader's convenience.

Given $\epsilon > 0$, an $\epsilon$-cover $\{U_i\}$ of $X$ is a cover such that the diameter of $U_i$ is less than $\epsilon$ for each $i$. Putting
\begin{equation}
\mathcal{H}^s(X) = \liminf_{\epsilon \to 0} \sum_{i=1}^{\infty} \delta(U_i)^s,
\end{equation}
where $\delta(U_i)$ denotes the diameter of $U_i$. The Hausdorff dimension of $X$ is defined by
\begin{equation}
\dim X = \inf \{s: \mathcal{H}^s(X) = 0\}.
\end{equation}

For subsets that are invariant under a dynamical system we can pose the problem of the Hausdorff dimension of an invariant measure. To be precise let us consider a map $g: X \to X$ with invariant probability measure $\mu$. The stochastic properties of $g$ are related to the topological structure of $X$. A relevant quantitative characteristic, which can be used to describe the  complexity of the topological structure of $X$, is the Hausdorff dimension of the measure $\mu$. The Hausdorff dimension of a probability measure $\mu$ on $X$ is defined by
$$
\dim \mu = \inf \{\dim Z: Z \subset X \text{ and } \mu(Z) = 1\}.
$$
$\mu$ is called a \emph{measure of full Hausdorff dimension} (MFHD) if $\dim \mu = \dim X$. A MFHD is used for the investigation of the Hausdorff dimension $\dim X$, and the computation of the Hausdorff dimension of a MFHD corresponds to the computation of the \emph{measure-theoretic entropy}, an analogous quantity as the topological entropy that illustrates the complexity of a physical system, of $X$ with respect to the MFHD \cite{Bow-PMI1979, P-1997}. This causes the discussion of measure-theoretic entropy to play an important role in this elucidation.

Given a shift space $(X, \sigma)$, we denote by $\mathcal{M}(X)$ the set of $\sigma$-invariant Borel Probability measures on $X$. Suppose $P$ is a irreducible stochastic matrix and $p$ a stochastic row vector such that $pP = p$, that is, the summation of entries in each row of $P$ is $1$, and the summation of the entries of $p$ is $1$. Notably such $p$ is unique due to the irreducibility of $P$. Define a $0, 1$ matrix $T$ by $T(i, j) = 1$ if and only if $P(i, j) > 0$. (The matrix $T$ is sometimes known as the \emph{incidence matrix} of $P$.) Denote the space of \emph{right-sided} SFT $X_T^+$ by
$$
X_T^+ = \{x \in \mathcal{A}^{\mathbb{Z}^+}: T(x_i, x_{i+1}) = 1 \text{ for all } i \geq 0\}.
$$
It is seen that $X_T^+$ is embedded as a subspace of $X_T$. The metric on $X_T^+$ is endowed with
$$
d^+(x, y) = \sum_{i \geq 0} \frac{|x_i - y_i|}{n^{i+1}}, \quad x, y \in X_T^+.
$$
Then $(p, P)$ defines an invariant measure $\mu^+$ on $X_T^+$ as
$$
\mu^+([\omega_0, \omega_1, \cdots, \omega_{k-1}]) = p(\omega_0) P(\omega_0, \omega_1) \cdots P(\omega_{k-2}, \omega_{k-1})
$$
for each cylinder set $I^+ = [\omega_0, \omega_1, \cdots, \omega_{k-1}] \subset X_T^+$ by the Kolmogorov Extension Theorem. Moreover, a measure $\mu^+$ on $X_T^+$ is Markov if and only if it is determined by a pair $(p, P)$ as above.

Similar to the above, we define the \emph{left-sided} SFT $X_T^-$ by
$$
X_T^- = \{x \in \mathcal{A}^{\mathbb{Z}^-}: T(x_{-i}, x_{-i+1}) = 1 \text{ for all } i \in \mathbb{N}\}.
$$
Then $X_T^-$ is a subspace of $X_T$, and the metric on $X_T^-$ is endowed with
$$
d^-(x, y) = \sum_{i \leq 0} \frac{|x_i - y_i|}{n^{-i+1}}, \quad x, y \in X_T^-.
$$
Let $Q$ be the transpose of $P$ and $q$ is the stochastic row vector such that $qQ = q$. Then $(q, Q)$ defines an invariant measure $\mu^-$ on $X_T^-$ as
$$
\mu^-([\omega_{-k+1}, \cdots, \omega_{-1}, \omega_0]) = q(\omega_0) Q(\omega_0, \omega_{-1}) \cdots Q(\omega_{-k+2}, \omega_{-k+1})
$$
for each cylinder set $I^- = [\omega_{-k+1}, \cdots, \omega_{-1}, \omega_0] \subset X_T^-$.

Notice that given a cylinder $I = [\omega_{-\ell+1}, \ldots, \omega_0, \ldots, \omega_{k-1}] \subset X_T$, $\ell, k \geq 1$, $I$ can be identified with the direct product $I^+ \times I^-$, where $I^+ = [\omega_0, \cdots, \omega_{k-1}] \subset X_T^+$ and $I^- = [\omega_{-\ell+1}, \cdots, \omega_0] \subset X_T^-$. Furthermore, $(p, P)$ defines an invariant measure $\mu$ on $X_T$ as $\mu(I) \approx \mu^+(I^+) \mu^-(I^-)$ for any cylinder $I \subset X_T$. To be precise, there exist positive constants $A_1$ and $A_2$ such that for integers $k, \ell \geq 0$, and any cylinder $I = [\omega_{-\ell+1}, \ldots, \omega_0, \ldots, \omega_{k-1}] \subset X_T$,
\begin{equation}\label{eq-mu-equal-mu+mu-}
A_1 \leq \dfrac{\mu(I)}{\mu^+(I^+) \mu^-(I^-)} \leq A_2.
\end{equation}
Combining \eqref{eq-mu-equal-mu+mu-} with the fact that every cylinder $I \in X_T$ is identified with $I^+ \times I^-$ infers that the study of the measure-theoretic entropy of one-sided subspace $X_T^+$/$X_T^-$ is significant for investigating the measure-theoretic entropy of $X_T$. What is more, the computation of the Hausdorff dimension of $X$ is closely related to the computation of the Hausdorff dimension of $X_T^+$/$X_T^-$. The reader is referred to \cite{Kit-1998} for more details.

Now we are ready to introduce the general definition of the measure-theoretic entropy. Given a shift space $X \subseteq \mathcal{A}^{\mathbb{Z}}$ and an invariant probability measure $\mu$ on $X$, the measure-theoretic entropy of $X$ with respect to $\mu$ is given by
$$
h_{\mu}(X) = - \lim_{n \to \infty} \frac{1}{n} \sum_{I \in X_n} \mu(I) \log \mu(I),
$$
where $X_n$ denotes the collection of cylinders of length $n$ in $X$. The concepts of the measure-theoretic and topological entropies are connected by the \emph{Variational Principle}:
$$
h(X) = \sup \{h_{\mu}(X): \mu \in \mathcal{M}(X)\}.
$$
$\mu$ is called a \emph{measure of maximal entropy} (also known as maximal measure) if $h_{\mu}(X) = h(X)$. Notably, suppose $X_T^+$/$X_T^-$/$X_T$ is a SFT determined by $T$, which is the incidence matrix of an irreducible stochastic matrix $P$. It is well-known that the Markov measure $\mu^+$/$\mu^-$/$\mu$, derived from the pair $(p, P)$, is the \emph{unique} measure of maximal entropy.

% -------------------------------------------------------------
\subsection{Results}

This subsection is devoted to illustrating the main results of the present elucidation. First we recall a well-known result.

\begin{theorem}[See {\cite[Theorem 4.1.7]{Kit-1998}}]\label{thm-diamond-inf2one}
Suppose $\phi: X \to Y$ is a one-block factor map between mixing SFTs, and $X$ has positive entropy. Then either
\begin{enumerate}
\item $\phi$ is uniformly bounded-to-one,

\item $\phi$ has no diamond,

\item $h(X)=h(Y)$
\end{enumerate}
or
\begin{enumerate}\setcounter{enumi}{3}
\item $\phi$ is uncountable-to-one on some point,

\item $\phi$ has diamond,

\item $h(X)>h(Y)$.
\end{enumerate}
\end{theorem}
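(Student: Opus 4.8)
The plan is to show that the two triples of conditions are each internally equivalent and that the first triple is the exact logical negation of the second, so that the statement is a genuine dichotomy. Since $\phi$ is one-block, it is induced by a symbol map $\Phi:\mathcal{A}(X)\to\mathcal{A}(Y)$, and I recall that a \emph{diamond} is a pair of distinct $X$-words $u=u_0\cdots u_\ell$ and $v=v_0\cdots v_\ell$ of the same length with $u_0=v_0$, $u_\ell=v_\ell$, $u\neq v$, and $\Phi(u_i)=\Phi(v_i)$ for all $i$. Thus items (2) and (5) are literal negations of one another, and it suffices to prove the cycle of implications (2) $\Rightarrow$ (1) $\Rightarrow$ (3) $\Rightarrow$ (2) for the first triple; the second triple (5) $\Rightarrow$ (4) $\Rightarrow$ (6) $\Rightarrow$ (5) then follows by contraposition together with one fiber-size upgrade.

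First I would prove (2) $\Rightarrow$ (1). Assuming no diamond, the $X$-lift of any $Y$-word is determined by its first and last $X$-symbols: two $X$-words over the same $Y$-word that share their initial and terminal symbols must coincide. A pigeonhole argument on a bounded central window then caps the number of $\phi$-preimages of any point of $Y$ by a constant depending only on $|\mathcal{A}(X)|$; indeed, if some $y$ had more preimages than can be distinguished by such a window, two of them would agree at two positions bracketing a disagreement, manufacturing a diamond. Hence $\phi$ is uniformly bounded-to-one. Next, (1) $\Rightarrow$ (3) is the standard entropy count: a factor map always gives $h(Y)\le h(X)$, while a uniform bound $M$ on fibers yields $|B_n(X)|\le C\,M\,|B_n(Y)|$, whence $h(X)\le h(Y)$ and equality holds.

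The remaining and most delicate implication is (3) $\Rightarrow$ (2), which I would establish in contrapositive form as (5) $\Rightarrow$ (6), simultaneously obtaining the upgrade (5) $\Rightarrow$ (4). Given a diamond $(u,v)$ with common endpoints $p=u_0=v_0$ and $q=u_\ell=v_\ell$, I would invoke mixing of $X$ to choose a connecting path $c$ from $q$ back to $p$, producing a loop based at $p$ that admits two distinct $X$-realizations with identical $Y$-image $\bar\gamma$. Concatenating $k$ copies and choosing each branch independently gives $2^k$ distinct $X$-words over the single $Y$-word $\bar\gamma^{\,k}$; passing to the bi-infinite periodic point $y^\ast=\bar\gamma^{\,\infty}$ then exhibits a Cantor set of preimages, which is (4). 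For the entropy gap (6), the same independent-branch device embeds a full two-shift of choices along the fibers over a positive-entropy base, forcing positive relative entropy, so that $h(X)\ge h(Y)+c>h(Y)$ for some $c>0$; positivity of $h(X)$ and mixing guarantee the construction is non-degenerate.

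The main obstacle is precisely this last step: converting a single fat fiber into a strict drop of exponential growth rate of $B_n(X)$ over $B_n(Y)$, rather than merely one large preimage set. I expect to handle it by splicing the two diamond-branches into a positive density of disjoint windows spaced by the mixing time, thereby multiplying the $X$-block count by a factor $2^{\Theta(n)}$ beyond the $Y$-block count; alternatively one may quote the Coven--Paul theorem that a factor map between irreducible SFTs preserves entropy exactly when it is finite-to-one, combined with the equivalence (1) $\Leftrightarrow$ (2) proved above. Finally, since (2) and (5) are negations and each triple is internally equivalent, exactly one alternative holds, which completes the dichotomy.
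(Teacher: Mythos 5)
First, a point of comparison: the paper does not prove this statement at all — it is quoted as Theorem 4.1.7 of Kitchens' book \cite{Kit-1998} and used as a black box — so your proposal has to stand entirely on its own. Most of it does: the pigeonhole argument that no diamond bounds every fiber by a constant depending only on $|\mathcal{A}(X)|$ is correct (the bound is $|\mathcal{A}(X)|^2$, since a lift of a $Y$-word is determined by its first and last symbols); the entropy equality then follows from the \emph{word}-fiber bound $|B_n(X)|\le |\mathcal{A}(X)|^2|B_n(Y)|$ (note you should derive this from the absence of diamonds rather than from bounded point-fibers, since a bound on point-fibers does not formally bound word-fibers except by going back through the no-diamond property); and the upgrade ``diamond $\Rightarrow$ uncountably many preimages of the periodic point $\bar\gamma^{\,\infty}$'' is correct.

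The genuine gap is exactly where you flagged it: diamond $\Rightarrow h(X)>h(Y)$, and your splicing device does not close it. Count what it yields: distributing $k$ diamond windows (each of image length $L$) among lifts of $Y$-words of length $n$ produces at least $|B_n(Y)|\cdot 2^k$ distinct $X$-words of length $N=n+kL+O(k)$, hence
\[
h(X)\;\ge\;\sup_{\delta>0}\,\frac{h(Y)+\delta\log 2}{1+\delta L},
\]
and this exceeds $h(Y)$ if and only if $h(Y)<(\log 2)/L$. Each inserted window forces the fixed image word $\bar\gamma$, destroying about $L\,h(Y)$ of $Y$-entropy while the binary branch choice recoups only $\log 2$; when $h(Y)\ge(\log 2)/L$ the ledger is negative and the bound is vacuous. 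The same defect undercuts your relative-entropy phrasing: the full $2$-shift of branch choices lives in the fiber over the \emph{single} periodic orbit of $\bar\gamma^{\,\infty}$, a zero-entropy base, so the relative variational principle gives only $h(X)\ge(\log 2)/L$, never $h(X)\ge h(Y)+c$. The standard repair is different in kind: normalize the diamond so that the two loops $uc\ne vc$ at $p$ differ in their first symbol, let $X'\subset X$ be the subshift of points in which $vc$ never occurs, show $\phi(X')=Y$ (replace occurrences of $vc$ by $uc$; the replacement process on a finite window terminates because each step strictly decreases the word lexicographically, and a compactness/diagonal argument passes to bi-infinite points), and then invoke the fact that a \emph{proper} subshift of an irreducible SFT has strictly smaller entropy, giving $h(Y)\le h(X')<h(X)$. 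It is this last fact — not any positive-density insertion — that produces the strict gap. Your fallback of quoting Coven--Paul is sound but concedes rather than proves the key implication; it is the same move the paper itself makes by citing Kitchens.
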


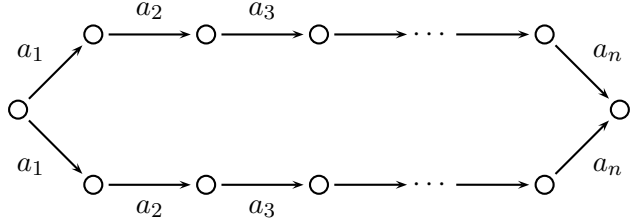
\begin{figure}
\begin{center}
\begin{pspicture}(13,4)\psset{nodesep=2pt}
\rput(1.5,2){\ovalnode{A}{\Tn}}\rput(9.5,2){\ovalnode{B}{\Tn}}
\rput(2.5,3){\ovalnode{C}{\Tn}}\rput(2.5,1){\ovalnode{D}{\Tn}}
\rput(4,3){\ovalnode{E}{\Tn}}\rput(4,1){\ovalnode{F}{\Tn}}
\rput(5.5,3){\ovalnode{G}{\Tn}}\rput(5.5,1){\ovalnode{H}{\Tn}}
\rput(7,3){\rnode{I}{\Tn{$\cdots$}}}\rput(7,1){\rnode{J}{\Tn{$\cdots$}}}
\rput(8.5,3){\ovalnode{K}{\Tn}}\rput(8.5,1){\ovalnode{L}{\Tn}}
\ncline{->}{A}{C}\naput{$a_1$}\ncline{->}{C}{E}\naput{$a_2$}\ncline{->}{E}{G}\naput{$a_3$}
\ncline{->}{G}{I}\ncline{->}{I}{K}\ncline{->}{K}{B}\naput{$a_n$}
\ncline{->}{A}{D}\nbput{$a_1$}\ncline{->}{D}{F}\nbput{$a_2$}\ncline{->}{F}{H}\nbput{$a_3$}
\ncline{->}{H}{J}\ncline{->}{J}{L}\ncline{->}{L}{B}\nbput{$a_n$}
\end{pspicture}
\end{center}
\caption{A factor map $\phi: X \to Y$ has a diamond infers that there exists a pair of distinct points in $X$ differing in only finitely many coordinates with the same image under $\phi$. It is named after the shape of its labeled graph representation.}
\label{fig-diamond}
\end{figure}

A \emph{diamond} for $\phi: X \to Y$ is a pair of distinct points in $X$ differing in only a finite number of coordinates with the same image under $\phi$ (cf.~Figure \ref{fig-diamond}). Theorem \ref{thm-diamond-inf2one} reveals that the investigation of the  existence of diamonds is equivalent to the study of infinite-to-one factor maps.

Without the loss of generality, we may assume that every factor map $\phi$ is a one-block code. That is, there exists $\Phi: \mathcal{A}(X) \to \mathcal{A}(Y)$ such that $\phi(x)_i = \Phi(x_i)$ for $i \in \mathbb{Z}$. Theorem \ref{thm-diamond-inf2one}, in other words, indicates that every factor map is either finite-to-one or infinite-to-one. In \cite{BCL-JDE2012}, the authors investigated those finite-to-one factor maps. The infinite-to-one factor maps are examined in this study. Once a factor map exists, we can use it to formulate the Hausdorff dimension of these spaces.

We start with considering the case that $\mathbf{Y}^{(1)}$ is finitely shift equivalent to $\mathbf{Y}^{(2)}$. Two spaces are FSE infers that a factor map between them, if it exists, is finite-to-one. Let $X$ be a shift space. A point $x \in X$ is said to be \emph{doubly transitive} if, for every $k \in \mathbb{N}$ and word $w$ in $X$, there exist $\overline{\ell} < 0 < \ell$ with $|\overline{\ell}|, \ell > k$ such that
$$
x_{\overline{\ell}-|w|+1} \cdots x_{\overline{\ell}} = w \quad \text{and} \quad x_{\ell} \cdots x_{\ell + |w|-1} = w.
$$
Suppose $\phi: X \to Y$ is a factor map. If there is a positive integer $K$ such that every doubly transitive point of $Y$ has exactly $K$ preimages under $\phi$. Such $K$ is called the \emph{degree} of $\phi$ and we define $d_{\phi} = K$ \cite{LM-1995}.

Let $w = w_1 \cdots w_n$ be a word in $Y$. For $1 \leq i \leq n$, define $d^*_{\phi}(w, i)$ to be the number of alphabets at coordinate $i$ in the preimages of $w$. In other words,
$$
d^*_{\phi}(w, i) = \# \{u_i \in \mathcal{A}(X): u = u_1 \cdots u_n \in X_n, \Phi(u_i) = w_i \text{ for } 1 \leq i \leq n\}.
$$
Denote
$$
d^*_{\phi} = \min \{d^*_{\phi}(w, i): w \in B(Y), 1 \leq i \leq |w|\},
$$
where $B(Y)$ indicates the collection of words in $Y$.

\begin{definition}
We say that $w \in B(Y)$ is a \emph{magic word} if $d^*_{\phi}(w, i) = d^*_{\phi}$ for some $i$. Such an index $i$ is called a \emph{magic coordinate}.
\end{definition}

We say a factor map $\phi$ has a \emph{synchronizing word} if there is a finite block $y_1 y_2 \cdots y_n \in \mathcal{A}_n(Y)$ such that, each element in $\phi^{-1}(y_1 y_2 \cdots y_n)$ admits the same terminal entry. A finite-to-one factor map $\phi$ has a synchronizing word indicating that the push-forward measure of a measure of maximal entropy under a finite-to-one factor map is still a measure of maximal entropy. The following is our first main result.

\begin{theorem}\label{main-thm-FSE}
Suppose the hidden space $\mathbf{Y}^{(1)}$ and the output space $\mathbf{Y}^{(2)}$ are FSE. Let $W^{(i)}$ be irreducible with finite-to-one factor map $\phi^{(i)}: W^{(i)} \to \mathbf{Y}^{(i)}$ for $i = 1, 2$. If $\phi^{(i)}$ has a synchronizing word, then
\begin{enumerate}[\bf i)]
  \item There is a one-to-one correspondence between $\mathcal{M}_{\max}(W^{(i)})$ and $\mathcal{M}_{\max}(\mathbf{Y}^{(i)})$, where $\mathcal{M}_{\max}(X)$ indicates the set of measures of maximal entropy.

  \item Let $m_i = |\mathcal{A}(W^{(i)})|, n_i = |\mathcal{A}(\mathbf{Y}^{(i)})|$, and $\mu^{(i)}$ a maximal measure of $W^{(i)}$. Then
  \begin{align*}
  \dim W^{(i)} &= \frac{h_{\mu^{(i)}}(W^{(i)})}{\log m_i} = 2 \frac{h_{\mu^{(i),+}}(W^{(i),+})}{\log m_i} \\
  \intertext{and}
  \dim \mathbf{Y}^{(i)} &=  \frac{h_{\phi^{(i)}\mu^{(i)}}(\mathbf{Y}^{(i)})}{\log n_i} =  \frac{h_{\mu^{(i)}}(W^{(i)})}{\log n_i} = 2  \frac{h_{\mu^{(i),+}}(W^{(i),+})}{\log n_i},
  \end{align*}
  where $\mu^{(i),+}$ is the maximal measure of the right-sided subspace $W^{(i),+}$ of $W^{(i)}$.

  \item Suppose $\pi: W^{(i)} \to W^{(\overline{i})}$ is a factor map and $\nu^{(i)} = \phi^{(i)} \mu^{(i)}$ for $i = 1, 2$, where $i + \overline{i} = 3$. If
  $$
  \dim \mathbf{Y}^{(i)} = \frac{h_{\nu^{(i)}}(\mathbf{Y}^{(i)})}{\log n_i},
  $$
  then
  $$
  \dim \mathbf{Y}^{(\overline{i})} = \frac{h_{\overline{\pi}\nu^{(i)}}(\mathbf{Y}^{(\overline{i})})}{\log n_{\overline{i}}} = \frac{h_{\nu^{(\overline{i})}}(\mathbf{Y}^{(\overline{i})})}{\log n_{\overline{i}}}
  $$
  for some $\overline{\pi}$.
\end{enumerate}
\end{theorem}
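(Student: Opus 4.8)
The plan is to handle the three assertions in turn, the unifying principle being that a finite-to-one factor map between irreducible systems preserves topological entropy and, at the level of invariant measures, the measure-theoretic entropy, so that both maximality and full-dimensionality are transported across such maps. For i), I would first observe that irreducibility of $W^{(i)}$ forces $\mathcal{M}_{\max}(W^{(i)}) = \{\mu^{(i)}\}$, the Parry (Markov) measure, while $\mathbf{Y}^{(i)} = \phi^{(i)}(W^{(i)})$ is an irreducible sofic shift and therefore also carries a unique measure of maximal entropy; so both sides are singletons and the asserted correspondence amounts to showing that push-forward $\mu \mapsto \phi^{(i)}\mu$ sends one to the other. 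The key step is that, $\phi^{(i)}$ being finite-to-one, Theorem \ref{thm-diamond-inf2one} gives $h(W^{(i)}) = h(\mathbf{Y}^{(i)})$ and the fiber (relative) entropy of $\phi^{(i)}$ vanishes, whence $h_{\phi^{(i)}\mu}(\mathbf{Y}^{(i)}) = h_{\mu}(W^{(i)})$ for every $\mu \in \mathcal{M}(W^{(i)})$. Thus $\mu$ is maximal iff $\phi^{(i)}\mu$ is maximal, and the synchronizing word is precisely what guarantees that $\phi^{(i)}\mu^{(i)}$ is again Markov and maximal rather than merely a sofic measure; uniqueness on both sides upgrades this to a bijection.

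For ii), I would invoke the standard dimension--entropy formula for a subshift sitting inside a full shift on $m$ symbols with the given metric. Applying the mass distribution principle to the Parry measure together with the Shannon--McMillan--Breiman theorem, the pointwise dimension of the one-sided Parry measure $\mu^{(i),+}$ at a typical point equals $h_{\mu^{(i),+}}(W^{(i),+})/\log m_i$, and this measure has full Hausdorff dimension (cf.\ \cite[Theorem 13.1]{P-1997}), so $\dim W^{(i),+} = h_{\mu^{(i),+}}(W^{(i),+})/\log m_i$. The factor of $2$ then arises from the product identification $I \approx I^+\times I^-$ and the measure factorization \eqref{eq-mu-equal-mu+mu-}: up to the almost-everywhere injective expansion map of Figure \ref{fig-551429}, the two-sided fractal is a product of the forward and backward one-sided fractals, whose dimensions add and, by the symmetry between $\mu^{(i),+}$ and $\mu^{(i),-}$, are equal, giving $\dim W^{(i)} = 2\dim W^{(i),+} = h_{\mu^{(i)}}(W^{(i)})/\log m_i$, the last equality being the additive decomposition of the two-sided entropy into its forward and backward parts. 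For $\mathbf{Y}^{(i)}$ the identical computation runs with $m_i$ replaced by the alphabet size $n_i$, while the numerators are unchanged because $\phi^{(i)}$ preserves entropy, namely $h_{\phi^{(i)}\mu^{(i)}}(\mathbf{Y}^{(i)}) = h_{\mu^{(i)}}(W^{(i)})$ by i); only the denominators differ, reflecting the different alphabets and hence different metrics of the two systems.

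For iii), I would first argue that the entropy-preserving $\pi: W^{(i)} \to W^{(\overline{i})}$ supplied by Proposition \ref{prop-factor-like-embed} is itself finite-to-one: equal entropy forces the first branch of the dichotomy in Theorem \ref{thm-diamond-inf2one}. Consequently $\pi\mu^{(i)}$ is a maximal measure on the irreducible SFT $W^{(\overline{i})}$, and by uniqueness $\pi\mu^{(i)} = \mu^{(\overline{i})}$. I would then produce $\overline{\pi}$ on the sofic level from the commuting relation $\overline{\pi}\circ\phi^{(i)} = \phi^{(\overline{i})}\circ\pi$, which forces
$$
\overline{\pi}\nu^{(i)} = \overline{\pi}\phi^{(i)}\mu^{(i)} = \phi^{(\overline{i})}\pi\mu^{(i)} = \phi^{(\overline{i})}\mu^{(\overline{i})} = \nu^{(\overline{i})},
$$
so the two numerators in the claim agree and equal $h(\mathbf{Y}^{(\overline{i})})$; applying ii) to the index $\overline{i}$ then yields the dimension formula. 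The hypothesis $\dim \mathbf{Y}^{(i)} = h_{\nu^{(i)}}(\mathbf{Y}^{(i)})/\log n_i$ enters only to certify that $\nu^{(i)}$ is a measure of full dimension, a property carried to $\nu^{(\overline{i})}$ by the finite-to-one map $\overline{\pi}$.

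The main obstacle is twofold. In ii), the delicate point is making the factor-of-$2$ product estimate rigorous for the sofic image $\mathbf{Y}^{(i)}$, where the coding $\phi^{(i)}$ is only finite-to-one: one must verify that $\phi^{(i)}$ does not distort Hausdorff dimension and that the product-of-measures approximation of \eqref{eq-mu-equal-mu+mu-} passes through the almost-everywhere injective expansion. In iii), the harder issue is that Proposition \ref{prop-factor-like-embed} only guarantees $\overline{\pi}$ when $\phi^{(i)}$ is a conjugacy, so in general one must check that $\overline{\pi}$ is well-defined and finite-to-one, i.e.\ that $\pi$ respects the fibers of $\phi^{(i)}$ and $\phi^{(\overline{i})}$ so that the induced map descends to $\mathbf{Y}^{(i)}$. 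This is exactly where the synchronizing word and the factor-like matrix structure must be used.
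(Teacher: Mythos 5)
Your parts (i) and (ii) reach the right conclusions, but your route through (i) differs from the paper's and misattributes the role of the hypothesis. You obtain maximality of $\phi^{(i)}\mu^{(i)}$ from vanishing fiber entropy of finite-to-one maps plus intrinsic ergodicity of irreducible sofic shifts; the paper instead proves that a synchronizing word makes $\phi^{(i)}$ almost invertible (Lemma \ref{lem-ai-sync-word}) and then invokes Theorem \ref{thm-MS-max2max} to get simultaneously the bijection $\mathcal{M}_{\max}(W^{(i)}) \to \mathcal{M}_{\max}(\mathbf{Y}^{(i)})$ and the entropy identity $h_{\mu}(W^{(i)}) = h_{\phi^{(i)}\mu}(\mathbf{Y}^{(i)})$. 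Your claim that the synchronizing word is what guarantees $\phi^{(i)}\mu^{(i)}$ is \emph{Markov} is off target: Markovianity of a sofic measure is governed by the separate Markov-condition machinery (Theorem \ref{thm-sofic-measure-THM4}) and is not needed anywhere in this theorem; in the paper the synchronizing word enters only through almost invertibility. (Also, Theorem \ref{thm-diamond-inf2one} concerns maps between mixing SFTs, so you cannot apply it literally to $\phi^{(i)}: W^{(i)} \to \mathbf{Y}^{(i)}$ with sofic target; entropy preservation under finite-to-one factors must be cited in its general form.) Part (ii) is essentially the paper's argument: the paper applies Theorem \ref{thm-pesin-dim} directly with $\lambda_1 = \cdots = \lambda_{m_i} = 1/m_i$, $K_1 = 1$, $K_2 = 3$, so your mass-distribution and product-structure discussion amounts to re-proving that cited theorem, after which both you and the paper transport the entropy through $\phi^{(i)}$.

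The genuine gap is in (iii). You insist on a pointwise factor map $\overline{\pi}: \mathbf{Y}^{(i)} \to \mathbf{Y}^{(\overline{i})}$ satisfying the intertwining relation $\overline{\pi}\circ\phi^{(i)} = \phi^{(\overline{i})}\circ\pi$, and, as you concede in your closing paragraph, you never verify that such a map exists: an arbitrary factor map $\pi: W^{(i)} \to W^{(\overline{i})}$ has no reason to respect the fibers of $\phi^{(i)}$, and neither the synchronizing word nor a factor-like matrix is shown to force this. The paper sidesteps the issue entirely: its $\overline{\pi}$ is not a point map but a correspondence between the sets of maximal measures (compare Theorem \ref{main-thm-ITO}, where $\overline{\pi}$ is written explicitly as a map $\mathcal{M}_{\max}(\mathbf{Y}^{(i)}) \to \mathcal{M}_{\max}(\mathbf{Y}^{(\overline{i})})$). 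Concretely, the hypothesis $\dim \mathbf{Y}^{(i)} = h_{\nu^{(i)}}(\mathbf{Y}^{(i)})/\log n_i$ certifies that $\nu^{(i)}$ is maximal; since all four maximal measures are unique (irreducible SFTs and their irreducible sofic images) and since $\phi^{(i)}$, $\pi$, $\phi^{(\overline{i})}$ all push maximal measures to maximal measures, one simply \emph{defines} $\overline{\pi}\nu^{(i)} := \nu^{(\overline{i})} = \phi^{(\overline{i})}\pi\mu^{(i)}$; your chain of equalities then holds by construction rather than as a consequence of an intertwining identity, and the final appeal to part (ii) at the index $\overline{i}$ finishes the proof. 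With that measure-level reading your argument closes; without it, the construction of $\overline{\pi}$ stalls exactly at the point you flagged.
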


In contrast with the map connecting $\overline{\pi}: \mathbf{Y}^{(i)} \to \mathbf{Y}^{(\overline{i})}$, if it exists, being finite-to-one when $h(\mathbf{Y}^{(1)}) = h(\mathbf{Y}^{(2)})$, Theorem \ref{thm-diamond-inf2one} indicates that $\overline{\pi}$ must be infinite-to-one for the case where $h(\mathbf{Y}^{(1)}) \neq h(\mathbf{Y}^{(2)})$. Intuitively, the number of infinite-to-one factor maps is much larger than the number of finite-to-one factor maps. Computer assisted examination serves affirmative results for MCNN \cite{BC-AMC2013}.

Suppose $\phi: X \to Y$ is a factor map and $h(X) \neq h(Y)$. Intuitively there is a maximal measure in $Y$ with infinite preimage. It is natural to ask whether these preimages are isomorphic to one another. The isomorphism of two measures demonstrates their measure-theoretic entropies coincide with the same value. In \cite{BT-TAMS1984}, Boyle and Tuncel indicated that any two Markov measures associated with the same image are isomorphic to each other if $\phi$ is a \emph{uniform factor}. We say that $\phi$ is uniform if $\phi \mu \in \mathcal{M}_{\max}(Y)$ for every $\mu \in \mathcal{M}_{\max}(X)$. $\phi$ is a uniform factor indicating $\dim Y$ is related to $\dim X$.

\begin{theorem}\label{main-thm-ITO}
Assume that $h(\mathbf{Y}^{(1)}) \neq h(\mathbf{Y}^{(2)})$. Let $W^{(i)}$ be irreducible with finite-to-one factor map $\phi^{(i)}: W^{(i)} \to \mathbf{Y}^{(i)}$ for $i = 1, 2$.
\begin{enumerate}[\bf i)]
  \item Suppose $\pi: W^{(i)} \to W^{(\overline{i})}$ is a uniform factor. Let $m_i = |\mathcal{A}(W^{(i)})|, n_i = |\mathcal{A}(\mathbf{Y}^{(i)})|$, and $\mu^{(i)}$ be a maximal measure of $W^{(i)}$. If
  $$
  \dim W^{(i)} = \frac{h_{\mu^{(i)}}(W^{(i)})}{\log m_i},
  $$
  then
  $$
  \dim W^{(\overline{i})} = \frac{h_{\mu^{(\overline{i})}}(W^{(\overline{i})})}{\log m_{\overline{i}}} = \frac{h_{\pi\mu^{(i)}}(W^{(\overline{i})})}{\log m_{\overline{i}}}.
  $$
\end{enumerate}
Furthermore, suppose $h(\mathbf{Y}^{(i)}) > h(\mathbf{Y}^{(\overline{i})})$ and $\phi^{(i)}$ has a synchronizing word, then
\begin{enumerate}[\bf i)]\setcounter{enumi}{1}
  \item There exists a factor map $\overline{\pi}: \mathcal{M}_{\max}(\mathbf{Y}^{(i)}) \to \mathcal{M}_{\max}(\mathbf{Y}^{(\overline{i})})$.

  \item If
  $$
  \dim \mathbf{Y}^{(i)} = \dfrac{h_{\nu^{(i)}}(\mathbf{Y}^{(i)})}{\log n_i},
  $$
  then
  $$
  \dim \mathbf{Y}^{(\overline{i})} = \dfrac{h_{\overline{\pi} \nu^{(i)}}(\mathbf{Y}^{(\overline{i})})}{\log n_{\overline{i}}}.
  $$
\end{enumerate}
\end{theorem}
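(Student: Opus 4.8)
The plan is to factor every assertion through two standard facts about the symbolic metric. First, for an ergodic invariant measure $\mu$ on a shift over an alphabet of cardinality $N$, the local dimension is constant almost everywhere and equals $h_{\mu}/\log N$ (Shannon--McMillan--Breiman together with the uniform contraction rate $\log N$ of the metric), so that the \emph{unique} maximal measure of an irreducible SFT or sofic shift is automatically its measure of full Hausdorff dimension; call this fact (a). Second, every push-forward occurring in the statement carries a maximal measure to a maximal measure; call this fact (b). Granting (a) and (b), each dimension identity reduces to identifying the transported measure on the target space with that target's unique maximal measure and then invoking $h_{\max}=h(\text{target})$.

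For item i), uniformity of $\pi$ gives $\pi\mu^{(i)}\in\mathcal{M}_{\max}(W^{(\overline{i})})$; since $W^{(\overline{i})}$ is an irreducible topological Markov chain, its maximal measure is unique, so $\pi\mu^{(i)}=\mu^{(\overline{i})}$ and the two right-hand quotients coincide. The remaining equality $\dim W^{(\overline{i})}=h_{\mu^{(\overline{i})}}(W^{(\overline{i})})/\log m_{\overline{i}}$ is fact (a) applied to the irreducible SFT $W^{(\overline{i})}$; the hypothesis on $W^{(i)}$ serves only to record that the embedding of $W^{(i)}$ realizes this exact value, and the identical statement holds verbatim for $W^{(\overline{i})}$.

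For items ii) and iii) I would build $\overline{\pi}$ as the composite $\phi^{(\overline{i})}\circ\pi\circ(\phi^{(i)})^{-1}$ read on measures. Because $\phi^{(i)}$ is finite-to-one and carries a synchronizing word, the argument behind Theorem \ref{main-thm-FSE}(i) applies to this single map and yields a bijection $\mathcal{M}_{\max}(W^{(i)})\leftrightarrow\mathcal{M}_{\max}(\mathbf{Y}^{(i)})$; hence $\nu^{(i)}$ has a unique maximal lift $\mu^{(i)}$ with $\phi^{(i)}\mu^{(i)}=\nu^{(i)}$. Uniformity of $\pi$ places $\pi\mu^{(i)}$ in $\mathcal{M}_{\max}(W^{(\overline{i})})$, and since $\phi^{(\overline{i})}$ is finite-to-one it preserves measure-theoretic entropy, so $\overline{\pi}\nu^{(i)}:=\phi^{(\overline{i})}\pi\mu^{(i)}$ is maximal on $\mathbf{Y}^{(\overline{i})}$, which proves ii). (Both $\mathcal{M}_{\max}$ are singletons by irreducibility, so the content of ii) is that this particular transport realizes the correspondence.) For iii), $\overline{\pi}\nu^{(i)}$ is then the unique maximal measure of the irreducible sofic shift $\mathbf{Y}^{(\overline{i})}$, and fact (a) --- applied exactly as in the stated hypothesis, now to $\mathbf{Y}^{(\overline{i})}$ through its almost-everywhere one-to-one presentation --- yields $\dim\mathbf{Y}^{(\overline{i})}=h_{\overline{\pi}\nu^{(i)}}(\mathbf{Y}^{(\overline{i})})/\log n_{\overline{i}}$.

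The crux is fact (b) along the middle arrow. Pushing a maximal measure through the finite-to-one maps $\phi^{(i)}$ and $\phi^{(\overline{i})}$ is harmless, since such maps neither gain nor lose measure-theoretic entropy, but the passage $W^{(i)}\to W^{(\overline{i})}$ sits in the regime $h(\mathbf{Y}^{(i)})>h(\mathbf{Y}^{(\overline{i})})$, where by Theorem \ref{thm-diamond-inf2one} the map is infinite-to-one and a generic factor strictly drops entropy; it is precisely the uniform-factor hypothesis on $\pi$, with the Boyle and Tuncel isomorphism \cite{BT-TAMS1984} controlling the now infinite fibres, that rescues maximality. I therefore expect the real work to lie in checking that the synchronizing word produces a genuinely well-defined maximal lift $(\phi^{(i)})^{-1}$ and that it is compatible with the uniform factor $\pi$, everything else reducing to the uniform symbolic dimension formula and the uniqueness of maximal measures on irreducible shifts.
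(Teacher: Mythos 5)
Your proposal is correct and follows essentially the same route as the paper: the paper explicitly reduces Theorem \ref{main-thm-ITO} (once a factor map $\pi: W^{(i)} \to W^{(\overline{i})}$ is in hand, here hypothesized) to ``the analogous method as the proof of Theorem \ref{main-thm-FSE}'', namely the symbolic dimension formula of Theorem \ref{thm-pesin-dim} (your fact (a)), uniqueness of maximal measures on irreducible shifts, the synchronizing-word/almost-invertibility correspondence of Lemma \ref{lem-ai-sync-word} and Theorem \ref{thm-MS-max2max} on the $\phi^{(i)}$ side, and uniformity of $\pi$ to preserve maximality across the entropy-dropping middle arrow, with finite-to-one-ness of $\phi^{(\overline{i})}$ handling the last leg. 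Your composite $\phi^{(\overline{i})}\circ\pi\circ(\phi^{(i)})^{-1}$ read on measures is precisely the transport the paper intends, so the two arguments coincide in substance.
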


We postpone the proof of Theorems \ref{main-thm-FSE} and \ref{main-thm-ITO} to the following section. In the mean time, we will introduce the factor maps between the solution, hidden, and output spaces.

% -------------------------------------------------------------
\section{Existence of Factors}

The existence of factor maps plays an important role in the proof of Theorems \ref{main-thm-FSE} and \ref{main-thm-ITO}. First we focus on whether or not a factor map between two spaces exists, and, if it exists, the possibility of finding out an explicit form.

% -------------------------------------------------------------
\subsection{Classification of Solution Spaces}

To clarify the discussion, we consider a simplified case. A simplified MCNN (SMCNN) is unveiled as
\begin{equation}\label{eq-2layer-mcnn-aar}
\left\{
\begin{split}
\frac{d x_i^{(1)}}{dt} &= - x_i^{(1)} + a^{(1)} y_i^{(1)} + a^{(1)}_r y_{i+1}^{(1)} + z^{(1)}, \\
\frac{d x_i^{(2)}}{dt} &= - x_i^{(2)} + a^{(2)} y_i^{(2)} + a^{(2)}_r y_{i+1}^{(2)} + b^{(2)} u_i^{(2)} + b^{(2)}_r u_{i+1}^{(2)} + z^{(2)}.
\end{split}
\right.
\end{equation}
Suppose $\mathbf{y} = \left({\cdots y_{-1}^{(2)} y_0^{(2)} y_1^{(2)} \cdots \atop \cdots y_{-1}^{(1)} y_0^{(1)} y_1^{(1)} \cdots}\right)$ is a mosaic pattern. For $i \in \mathbb{Z}$, $y_i^{(1)} = 1$ if and only if
\begin{equation}
a^{(1)} + z^{(1)} - 1 > -a^{(1)}_r y^{(1)}_{i+1}.\label{eq-2layer-ineq1}
\end{equation}
Similarly, $y_i^{(1)} = -1$ if and only if
\begin{equation}
a^{(1)} - z^{(1)} - 1 > a^{(1)}_r y^{(1)}_{i+1}.\label{eq-2layer-ineq2}
\end{equation}
The same argument asserts
\begin{align}
a^{(2)} + z^{(2)} - 1 &> -a^{(2)}_r y^{(2)}_{i+1} - (b^{(2)} u^{(2)}_i + b^{(2)}_r u^{(2)}_{i+1}),\label{eq-2layer-ineq3} \\
\intertext{and}
a^{(2)} - z^{(2)} - 1 &> a^{(2)}_r y^{(2)}_{i+1} + (b^{(2)} u^{(2)}_i + b^{(2)}_r u^{(2)}_{i+1})\label{eq-2layer-ineq4}
\end{align}
are the necessary and sufficient conditions for $y_i^{(2)} = -1$ and $y_i^{(2)} = 1$, respectively. Note that the quantity $u^{(2)}_i$ in \eqref{eq-2layer-ineq3} and \eqref{eq-2layer-ineq4} satisfies $|u^{(2)}_i| = 1$ for each $i$. Define $\xi_1: \{-1, 1\} \to \mathbb{R}$ and $\xi_2: \{-1, 1\}^{\mathbb{Z}_{3 \times 1}} \to \mathbb{R}$ by
$$
\xi_1(w) = a^{(1)}_r w, \qquad \xi_2(w_1, w_2, w_3) = a^{(2)}_r w_1 + b^{(2)} w_2 + b^{(2)}_r w_3.
$$
Set
\begin{align*}
\mathcal{B}^{(1)} &= \left\{\boxed{y^{(1)} y_r^{(1)}}: y^{(1)}, y_r^{(1)} \in \{-1, 1\} \text{ satisfy } \eqref{eq-2layer-ineq1}, (\ref{eq-2layer-ineq2})\right\}, \\
\mathcal{B}^{(2)} &= \left\{\boxed{y^{(2)} y_r^{(2)} \atop \displaystyle u^{(2)} u_r^{(2)}}: y^{(2)}, y_r^{(2)}, u^{(2)}, u_r^{(2)} \in \{-1, 1\} \text{ satisfy } \eqref{eq-2layer-ineq3}, \eqref{eq-2layer-ineq4}\right\}.
\end{align*}
That is,
\begin{align*}
\boxed{y^{(1)} y_r^{(1)}} \in \mathcal{B}^{(1)} &\Leftrightarrow \left\{
                                              \begin{array}{ll}
                                                a^{(1)} + z^{(1)} - 1 > - \xi_1(y_r^{(1)}), & \hbox{if $y^{(1)}=1$;} \\
                                                a^{(1)} - z^{(1)} - 1 > \xi_1(y_r^{(1)}), & \hbox{if $y^{(1)}=-1$.}
                                              \end{array}
                                            \right. \\
\boxed{y^{(2)} y_r^{(2)} \atop \displaystyle u^{(2)} u_r^{(2)}} \in \mathcal{B}^{(2)} &\Leftrightarrow \left\{
                                                                                        \begin{array}{ll}
                                                                                          a^{(2)} + z^{(2)} - 1 > - \xi_2(y_r^{(2)}, u^{(2)}, u_r^{(2)}), & \hbox{if $y^{(2)}=1$;} \\
                                                                                          a^{(2)} - z^{(2)} - 1 > \xi_2(y_r^{(2)}, u^{(2)}, u_r^{(2)}), & \hbox{if $y^{(2)}=-1$.}
                                                                                        \end{array}
                                                                                      \right.
\end{align*}
The set of admissible local patterns $\mathcal{B}$ of (\ref{eq-2layer-mcnn-aar}) is then
$$
\mathcal{B} = \left\{\boxed{y y_r \atop \displaystyle u u_r}: \boxed{y y_r \atop \displaystyle u u_r} \in \mathcal{B}^{(2)} \text{ and } \boxed{u u_r} \in \mathcal{B}^{(1)}\right\}
$$
The authors indicated in \cite{BCL-JDE2012} that there exists $139,968$ regions in the parameter space $\mathcal{P}$ of SMCNNs such that any two sets of templates that are located in the same region infer the same solution spaces. The partition of the parameter space is determined as follows.

Since $y^{(1)}, y^{(2)}, y_r^{(1)}, y_r^{(2)}, u^{(2)}, u_r^{(2)} \in \{-1, 1\}$, $a^{(1)} + z^{(1)} - 1 = - \xi_1(y_r^{(1)})$ and $a^{(1)} + z^{(1)} - 1 = \xi_1(y_r^{(1)})$ partition $a^{(1)}\ z^{(1)}$-plane into $9$ regions, the ``order" of lines $a^{(1)} + z^{(1)} - 1 = (-1)^{\ell} \xi_1(y_r^{(1)})$, $\ell = 0, 1$, comes from the sign of $a_r^{(1)}$. Thus the parameter space $\{(a^{(1)}, a^{(1)}_r, z^{(1)})\}$ is partitioned into $2 \times 9 = 18$ regions. Similarly, $a^{(2)} + z^{(2)} - 1 > - \xi_2(y_r^{(2)}, u^{(2)}, u_r^{(2)})$ and $a^{(2)} + z^{(2)} - 1 > \xi_2(y_r^{(2)}, u^{(2)}, u_r^{(2)})$ partition $a^{(2)}\ z^{(2)}$-plane into $81$ regions. The order of $a^{(2)} + z^{(2)} - 1 > (-1)^{\ell} \xi_2(y_r^{(2)}, u^{(2)}, u_r^{(2)})$ can be uniquely determined according to the following procedures.
\begin{enumerate}[(i)]
  \item The signs of $a_r^{(2)}, b^{(2)}, b_r^{(2)}$.
  \item The magnitude of $a_r^{(2)}, b^{(2)}, b_r^{(2)}$.
  \item The competition between the parameters with the largest magnitude and the others. In other words, suppose $m_1 > m_2 > m_3$ represent $|a_r^{(2)}|, |b^{(2)}|, |b_r^{(2)}|$. We need to determine whether $m_1 > m_2 + m_3$ or $m_1 < m_2 + m_3$.
\end{enumerate}
This partitions the parameter space $\{(a^{(2)}, a^{(2)}_r, b^{(2)}, b^{(2)}_r, z^{(2)})\}$ into $8 \times 6 \times 2 \times 81 = 7776$ regions. Hence the parameter space $\mathcal{P}$ is partitioned into $81 \times 7776 = 139,968$ equivalent subregions.

Since the solution space $\mathbf{Y}$ is determined by the basic set of admissible local patterns, these local patterns play an essential role for investigating SMCNNs. Substitute mosaic patterns $-1$ and $1$ as symbols $-$ and $+$, respectively. Define the ordering matrix of $\{-, +\}^{\mathbb{Z}_{2 \times 2}}$ by
$$
\mathbb{X} = \bordermatrix{
  & \fbox{$\overset{\displaystyle-}{-}$} & \fbox{$\overset{\displaystyle-}{+}$} & \fbox{$\overset{\displaystyle+}{-}$} & \fbox{$\overset{\displaystyle+}{+}$} \vspace{1mm}\cr
\fbox{$\overset{\displaystyle-}{-}$} & \fbox{$\overset{\displaystyle--}{--}$} & \fbox{$\overset{\displaystyle--}{-+}$} & \fbox{$\overset{\displaystyle-+}{--}$} & \fbox{$\overset{\displaystyle-+}{-+}$} \vspace{1mm}\cr
\fbox{$\overset{\displaystyle-}{+}$} &\fbox{$\overset{\displaystyle--}{+-}$} & \fbox{$\overset{\displaystyle--}{++}$} & \fbox{$\overset{\displaystyle-+}{+-}$} & \fbox{$\overset{\displaystyle-+}{++}$} \vspace{1mm}\cr
%                 \hdashline\vspace{-3.5mm}\\
\fbox{$\overset{\displaystyle+}{-}$} &\fbox{$\overset{\displaystyle+-}{--}$} & \fbox{$\overset{\displaystyle+-}{-+}$} & \fbox{$\overset{\displaystyle++}{--}$} & \fbox{$\overset{\displaystyle++}{-+}$} \vspace{1mm}\cr
\fbox{$\overset{\displaystyle+}{+}$} &\fbox{$\overset{\displaystyle+-}{+-}$} & \fbox{$\overset{\displaystyle+-}{++}$} & \fbox{$\overset{\displaystyle++}{+-}$} & \fbox{$\overset{\displaystyle++}{++}$}}
= (x_{pq})_{1 \leq p, q \leq 4}
$$
Notably each entry in $\mathbb{X}$ is a $2 \times 2$ pattern since $\mathcal{B}$ consists of $2 \times 2$ local patterns. Suppose that $\mathcal{B}$ is given. The transition matrix $T \equiv T(\mathcal{B})  \in \mathcal{M}_4(\{0, 1\})$ is defined by
$$
T(p,q) = \left\{
           \begin{array}{ll}
             1, & \hbox{if $x_{pq} \in \mathcal{B}$;} \\
             0, & \hbox{otherwise.}
           \end{array}
         \right.
$$

Let $\mathcal{A} = \{\alpha_0, \alpha_1, \alpha_2, \alpha_3\}$, where
$$
\alpha_0 = --, \quad \alpha_1 = -+, \quad \alpha_2 = +- \quad \text{and} \quad \alpha_3 = ++
$$
Define $\mathcal{L}^{(1)}, \mathcal{L}^{(2)}$ by
$$
\mathcal{L}^{(1)}(y_0 y_1 \diamond u_0 u_1) = u_0 u_1 \quad \text{and} \quad \mathcal{L}^{(2)}(y_0 y_1 \diamond u_0 u_1) = y_0 y_1
$$
respectively. It is known that $T$ determines a graph while $(T, \mathcal{L}^{(i)})$ determines a labeled graph for $i = 1, 2$. As we mentioned in last section that the transition matrix $T$ determines the solution space $\mathbf{Y}$, $T$ not describe the hidden and output spaces $\mathbf{Y}^{(1)}$ and $\mathbf{Y}^{(2)}$, though. Instead, $\mathbf{Y}^{(1)}, \mathbf{Y}^{(2)}$ are illustrated by the symbolic transition matrices. The symbolic transition matrix $S^{(i)}$ is defined by
\begin{equation}\label{eq-symbolic-matrix}
S^{(i)}(p,q) = \left\{
           \begin{array}{ll}
             \alpha_j, & \hbox{if $T(p,q)=1$ and $\mathcal{L}^{(i)}(x_{pq}) = \alpha_j$ for some $j$;} \\
             \varnothing, & \hbox{otherwise.}
           \end{array}
         \right.
\end{equation}
Herein $\varnothing$ means there exists no local pattern in $\mathcal{B}$ related to its corresponding entry in the ordering matrix. A labeled graph is called \emph{right-resolving} if, for every fixed row of its symbolic transition matrix, the multiplicity of each symbol is $1$. With a little abuse of notations, $\mathbf{Y}^{(i)}$ can be described by $S^{(i)}$ which is right-resolving for $i = 1, 2$. Let $T^{(i)}$ be the incidence matrix of $S^{(i)}$, that is, $T^{(i)}$ is of the same size of $S^{(i)}$ and is defined by
$$
T^{(i)}(p,q) = \left\{
           \begin{array}{ll}
             1, & \hbox{if $S^{(i)}(p,q) \neq \varnothing$;} \\
             0, & \hbox{otherwise.}
           \end{array}
         \right.
$$
Then $W^{(i)}$ is determined by $T^{(i)}$ for $i = 1, 2$. The reader is referred to \cite{BCL-JDE2012, BCLL-JDE2009} for more details.

% -------------------------------------------------------------
\subsection{Sofic Measures and Linear Representable Measures}

Theorems \ref{main-thm-FSE} and \ref{main-thm-ITO} investigate the Hausdorff dimension of $W^{(i)}$ and $\mathbf{Y}^{(i)}$ and see if they are related. The proof relies on two essential ingredients: the existence of maximal measures and factor maps. The upcoming subsection involves the former while the latter is discussed in the next two subsections separately.

Let $X$ and $Y$ be subshifts and $\phi: X \rightarrow Y$ be a factor map. Suppose $\mu$ is a Markov measure on $X$, then $\phi \mu $ is called a \emph{sofic measure} (also known as a \emph{hidden Markov measure}, cf.~\cite{BP-2011}). Let $B \in \mathbb{R}^{m \times m}$ be an irreducible matrix with spectral radius $\rho_{B}$ and positive right eigenvector $r$; the \emph{stochasticization} of $B$ is the stochastic matrix
\begin{equation*}
\mathbb{B}:= stoch(B)=\frac{1}{\rho_B }D^{-1}BD,
\end{equation*}
where $D$ is the diagonal matrix with diagonal entries $D(i,i)=r(i)$. A measure $\mu$ on $X$ is called \emph{linear representable} with dimension $m$ if there exists a triple $(x,P,y)$ with $x$ being a $1\times m$ row vector, $y$ being a $m \times 1$ column
vector and $P=(P_{i})_{i\in \mathcal{A}(X)}$, where $P_{i} \in \mathbb{R}^{m \times m}$ such that for all $I=[i_{0},\ldots ,i_{n-1}] \in
X_{n} $, the measure $\mu$ can be characterized as the following form:
\begin{equation*}
\mu (\left[ I\right] )=xP_{I}y = x P_{i_{0}}P_{i_{1}} \cdots P_{i_{n-1}} y.
\end{equation*}%
The triple $(x,P,y)$ is called the \emph{linear representation} of the measure $\mu$. The reader is referred to \cite{BP-2011} for more details.

\begin{proposition}[See {\cite[Theorem 4.20]{BP-2011}}]\label{prop-LR-pushforward-LR}
Let $X$ be an irreducible SFT with transition matrix $T \in \mathbb{R}^{m \times m}$ and $\phi: X \rightarrow Y$ be a one-block factor map. Let $\mathbb{T}=stoch(T)$ and $l$ be the probability left eigenvector of $\mathbb{T}$. Then
\begin{enumerate}[(i)]
\item The Markov measure $\mu$ on $X$ is the linear representable measure with respect to the triple $(l, P, \mathbf{1}_m)$, where $\mathbf{1}_m$ is the column vector with each entry being $1$ and $P = (\mathbb{T}_{i}) _{i\in \mathcal{A}(X)}$ for which
\begin{equation*}
P_{I} = \mathbb{T}_{i_{0}}\cdots \mathbb{T}_{i_{n-1}}, \quad \text{for all} \quad I=[i_{0},\ldots ,i_{n-1}] \in X_{n}
\end{equation*}%
here $\mathbb{T}_{k}(i,j)=\mathbb{T}( i,j) $ if $j=k$ and $\mathbb{T}_{k}(i,j)=0$ otherwise.

\item The push-forward measure $\nu =\phi \mu$ is linear representable with respect to the triple $(l,Q,\mathbf{1}_m)$, where $Q$ is generated by $(Q_{j}) _{j\in \mathcal{A}(Y)}=\left( \mathbb{T}_{j}\right) _{j\in \mathcal{A}(Y)}$ for which $\mathbb{T}_{k}(u,v)=\mathbb{T}(u,v) $ if $\phi(v)=k$ and $\mathbb{T}_{k}(u,v)=0$ otherwise.
\end{enumerate}
\end{proposition}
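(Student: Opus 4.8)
The plan is to verify both linear representations directly from the defining cylinder formulas, the whole computation resting on one algebraic observation: for each symbol $k$ the column-restricted matrix $\mathbb{T}_k$ factors as $\mathbb{T}_k = \mathbb{T} E_k$, where $E_k = e_k e_k^{\mathsf T}$ is the rank-one diagonal projection onto the $k$th coordinate ($e_k$ being the $k$th standard column vector). Indeed $(\mathbb{T} E_k)(u,v) = \mathbb{T}(u,k)\,\delta_{v,k}$, which coincides with $\mathbb{T}_k(u,v)$ since $\delta_{v,k}\mathbb{T}(u,v)=\delta_{v,k}\mathbb{T}(u,k)$. The second ingredient is that $l\mathbb{T}=l$, which is what makes the long matrix product telescope.

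For part (i), I would proceed by induction on the cylinder length. Recall that $\mathbb{T}=stoch(T)$ is row-stochastic, so $\mathbb{T}\mathbf{1}_m=\mathbf{1}_m$, and by irreducibility $l$ is the unique probability vector with $l\mathbb{T}=l$; thus $(l,\mathbb{T})$ determines the Markov measure $\mu$ with $\mu([i_0,\ldots,i_{n-1}])=l(i_0)\,\mathbb{T}(i_0,i_1)\cdots\mathbb{T}(i_{n-2},i_{n-1})$. Using $\mathbb{T}_{i_0}=\mathbb{T} E_{i_0}$ together with $l\mathbb{T}=l$ gives $l\,\mathbb{T}_{i_0}=lE_{i_0}=l(i_0)\,e_{i_0}^{\mathsf T}$. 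Multiplying on the right by $\mathbb{T}_{i_1}=\mathbb{T} E_{i_1}$ and using that $e_{i_0}^{\mathsf T}\mathbb{T}$ is the $i_0$th row of $\mathbb{T}$ yields $l(i_0)\mathbb{T}(i_0,i_1)\,e_{i_1}^{\mathsf T}$, and iterating one obtains
\[
l\,\mathbb{T}_{i_0}\mathbb{T}_{i_1}\cdots\mathbb{T}_{i_{n-1}} = l(i_0)\,\mathbb{T}(i_0,i_1)\cdots\mathbb{T}(i_{n-2},i_{n-1})\,e_{i_{n-1}}^{\mathsf T}.
\]
Since $e_{i_{n-1}}^{\mathsf T}\mathbf{1}_m=1$, right-multiplication by $\mathbf{1}_m$ recovers exactly $\mu([i_0,\ldots,i_{n-1}])$, proving that $(l,P,\mathbf{1}_m)$ with $P=(\mathbb{T}_i)_i$ is a linear representation of $\mu$.

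For part (ii), I would first record that the one-block factor $\phi$ is induced by $\Phi:\mathcal{A}(X)\to\mathcal{A}(Y)$, so by definition of the push-forward $\nu([j_0,\ldots,j_{n-1}])=\mu\big(\phi^{-1}[j_0,\ldots,j_{n-1}]\big)=\sum \mu([v_0,\ldots,v_{n-1}])$, the sum running over all words with $\Phi(v_t)=j_t$ for $0\le t\le n-1$. The key identity is $Q_j=\sum_{v:\,\Phi(v)=j}\mathbb{T}_v$, immediate from $Q_j=\mathbb{T}E^{(j)}$ with $E^{(j)}=\sum_{\Phi(v)=j}E_v$. Expanding the product $Q_{j_0}\cdots Q_{j_{n-1}}$ by multilinearity distributes it into $\sum \mathbb{T}_{v_0}\cdots\mathbb{T}_{v_{n-1}}$ over the same index set, and applying part (i) term by term gives $l\,Q_{j_0}\cdots Q_{j_{n-1}}\mathbf{1}_m=\sum \mu([v_0,\ldots,v_{n-1}])=\nu([j_0,\ldots,j_{n-1}])$, which is the assertion.

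The computation is essentially bookkeeping, so the main thing to watch is consistency rather than a deep obstacle. I would take care to confirm at the outset that $\mathbb{T}=stoch(T)$ really is stochastic with stationary distribution $l$ — this follows from $T r=\rho_T r$ via $\mathbb{T}\mathbf{1}_m=\tfrac{1}{\rho_T}D^{-1}T D\mathbf{1}_m=\mathbf{1}_m$ — so that $\mu$ is a genuine Markov measure and the cylinder values are Kolmogorov-consistent. The one genuinely easy-to-confuse point is that the symbol $\mathbb{T}_k$ carries two meanings: in (i) it is column-restriction to a single symbol of $\mathcal{A}(X)$, whereas in (ii) the matrices $Q_j$ are column-restrictions to the whole fiber $\Phi^{-1}(j)$; making the relation $Q_j=\sum_{\Phi(v)=j}\mathbb{T}_v$ explicit is what cleanly bridges the two parts.
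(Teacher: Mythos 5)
Your proof is correct, but there is nothing in the paper to measure it against: the paper does not prove this proposition at all --- it is imported verbatim from Theorem 4.20 of \cite{BP-2011} and used as a black box in the later discussion of sofic measures. Your argument is a clean, self-contained substitute. The decomposition $\mathbb{T}_k=\mathbb{T}E_k$ with $E_k=e_k^{}e_k^{\mathsf T}$, which makes $l\,\mathbb{T}_{i_0}\cdots\mathbb{T}_{i_{n-1}}$ collapse to $l(i_0)\,\mathbb{T}(i_0,i_1)\cdots\mathbb{T}(i_{n-2},i_{n-1})\,e_{i_{n-1}}^{\mathsf T}$, is exactly the right mechanism for (i), and the identity $Q_j=\sum_{\Phi(v)=j}\mathbb{T}_v$ reduces (ii) to (i) by multilinearity together with the observation that $\phi^{-1}$ of a $Y$-cylinder is the disjoint union of the $X$-cylinders over words in the fiber. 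One point is worth stating explicitly, though it costs nothing: the multilinear expansion of $Q_{j_0}\cdots Q_{j_{n-1}}$ runs over \emph{all} tuples $(v_0,\ldots,v_{n-1})$ with $\Phi(v_t)=j_t$, not only over admissible $X$-words, so you should note that inadmissible tuples contribute zero on both sides --- which holds because stochasticization preserves the zero pattern of $T$, so $\mathbb{T}(v_t,v_{t+1})=0$ whenever $T(v_t,v_{t+1})=0$. With that remark inserted, your two computations establish precisely the two claimed linear representations, and your preliminary check that $\mathbb{T}\mathbf{1}_m=\mathbf{1}_m$ with stationary vector $l$ correctly pins down which Markov measure the statement refers to (the Parry measure of $X$).
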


In the following we propose a criterion to determine whether a sofic measure is actually a Markov measure. The procedure of the criterion is systematic and is checkable which makes our method practical. Suppose the factor map $\phi: X \to Y$ is a one-block code. For $j \in \mathcal{A}(Y)$, define
$$
E_{j} = \{i: \phi(i) = j\} \quad \text{and} \quad e_j = \# E_j
$$
For each $j_1 j_2 \in Y_2$, let $N_{j_1 j_2} \in \mathbb{R}^{e_{j_1} \times e_{j_2}}$ be defined by
$$
N_{j_1 j_2}(p, q) = \left\{
\begin{array}{ll}
1, & p q \in X_2 \hbox{;} \\
0, & \hbox{otherwise.}
\end{array}\right.
$$
where $p \in E_{j_1}, q \in E_{j_2}$. Set $N = (N_{j_1 j_2})$ if $e_{j_1} = e_{j_2}$ for all $j_1 j_2 \in Y_2$. Otherwise, we enlarge the dimension of $N_{j_1 j_2}$ by inserting ``pseudo vertices" so that $N_{j_1 j_2}$ is a square matrix.

We say that $N$ satisfies the \textit{Markov condition of order $k$} if there exists a nontrivial subspace $\{V_J\}_{J \in Y_{k+1}}$ such that, for each $J \in Y_{k+1}$, there exists $m_{J(0, k-1), J(1, k)}$ such that $V_{J(0, k-1)} N_J = m_{J(0, k-1), J(1, k)} V_{J(1, k)}$, here $J(i_1, i_2) = j_{i_1} j_{i_1+1} \cdots j_{i_2}$ with $J = j_1 j_2 \cdots j_{k+1}$. For simplification, we say that $N$ satisfies the Markov condition if $N$ satisfies the Markov condition of order $k$ for some $k \in \mathbb{N}$.

At this point, a further question arises:
\begin{quote}
\it Suppose $N$ satisfies the Markov condition. What kind of Markov measure is $\nu$?
\end{quote}
To answer this question, we may assume $m(J( 0,k-1), J(1,k)) \in \mathbb{R}$ such that
\begin{equation}\label{eq:coeff-when-N-markov-condition}
V_{J(0, k-1) }N_{J(0, k)} = m(J(0, k-1), J(1, k)) V_{J(1, k)}.
\end{equation}
In \cite{BCC-2012}, the authors illustrated what kind of Markov measure $\nu$ is.

\begin{theorem}[See {\cite[Theorem 4]{BCC-2012}}]\label{thm-sofic-measure-THM4}
If $N$ satisfies the Markov condition of order $k$, then $Y$ is a SFT. Furthermore, $\nu$ is the unique maximal measure of $Y$ with transition matrix $M = [m(J,J')]_{J,J' \in Y_k}$.
\end{theorem}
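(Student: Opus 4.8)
The plan is to convert the linear-algebraic Markov condition on the combinatorial fibre matrices $N$ into a closed product formula for the cylinder measures of $\nu=\phi\mu$, to read that formula as the defining formula of a $k$-step Markov measure, and finally to identify this measure with the Parry (maximal) measure of $Y$. First I would record the cylinder values of $\nu$. Since $\mu$ is the maximal measure of the irreducible SFT $X$, it is the Markov measure attached to $\mathbb{T}=stoch(T)$ and its left eigenvector $l$, so by Proposition \ref{prop-LR-pushforward-LR} (equivalently, by the Parry formula directly) every word $w=w_0\cdots w_{n-1}\in Y_n$ satisfies
\[
\nu([w])=\rho^{-(n-1)}\,\mathbf{l}_{w_0}\,N_{w_0w_1}N_{w_1w_2}\cdots N_{w_{n-2}w_{n-1}}\,\mathbf{r}_{w_{n-1}},
\]
where $\rho=\rho(T)$, $r$ is the right Perron eigenvector, and $\mathbf{l}_{w_0}$, $\mathbf{r}_{w_{n-1}}$ are the restrictions of $l$, $r$ to the fibres $E_{w_0}$, $E_{w_{n-1}}$. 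Thus $\nu([w])$ is exactly the weighted count of the $\phi$-preimages of $w$, and the matrices $N$ record which fibre transitions are admissible.

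Next I would feed the Markov condition \eqref{eq:coeff-when-N-markov-condition} into this product. The relation $V_{J(0,k-1)}N_J=m\big(J(0,k-1),J(1,k)\big)V_{J(1,k)}$ allows a (weighted) boundary vector $V$ attached to a $k$-block to pass through one fibre-transition block at the cost of a single scalar $m$, the indexing window sliding by one symbol. Applying this repeatedly along $w$ collapses the bulk of the matrix product into the scalar product $\prod_t m(w_t\cdots w_{t+k-1},\,w_{t+1}\cdots w_{t+k})$; folding in $\rho^{-(n-1)}$ and contracting the two ends against $\mathbf{l}_{w_0}$ and $\mathbf{r}_{w_{n-1}}$ leaves a factor depending only on the first $k$-block of $w$, so that
\[
\nu([w_0\cdots w_{n-1}])=c(w_0\cdots w_{k-1})\prod_{t=0}^{n-k-1} m\big(w_t\cdots w_{t+k-1},\,w_{t+1}\cdots w_{t+k}\big).
\]
This is precisely the cylinder formula of a $k$-step Markov measure, i.e.\ of a $1$-step Markov measure on the $k$-block recoding $Y^{[k]}$ with transition matrix $M=[m(J,J')]_{J,J'\in Y_k}$; a routine check using $\sum\nu=1$ and shift-invariance shows that $M$ is stochastic and $c$ is its stationary initial weight.

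With $\nu$ in Markov form the two conclusions follow. Because $\mu$ is fully supported on $X$ and $\phi$ is onto, $\nu$ is fully supported on $Y$; but the support of a $k$-step Markov measure is exactly the $k$-step SFT cut out by its admissible $(k+1)$-blocks, so $\mathrm{supp}\,\nu=Y$ forces $Y$ to be that SFT. For maximality I would use that the relevant factor map (in the intended application, $\phi^{(i)}\colon W^{(i)}\to\mathbf{Y}^{(i)}$) is finite-to-one: a finite-to-one factor code preserves both measure-theoretic and topological entropy \cite{LM-1995}, whence $h_\nu(Y)=h_\mu(X)=h(X)=h(Y)$ and $\nu$ is a measure of maximal entropy. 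Since $Y$ is an irreducible SFT it has a \emph{unique} maximal measure; being Markov, that measure is the one coming from the stochasticization of the adjacency of $Y^{[k]}$, and comparison of cylinder values identifies its transition matrix with $M$. Hence $\nu$ is the unique maximal measure of $Y$ with transition matrix $M$.

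I expect the main obstacle to lie in the telescoping step, specifically in controlling the boundary. The relation \eqref{eq:coeff-when-N-markov-condition} only propagates cleanly once one knows that the edge contributions align with the family $\{V_J\}$ --- that $\mathbf{l}_{w_0}N_{w_0w_1}\cdots N_{w_{k-2}w_{k-1}}$ is proportional to $V_{w_0\cdots w_{k-1}}$ and that the resulting constant neither vanishes nor blows up as $n\to\infty$. This is where the nontriviality of the subspace $\{V_J\}$ and the irreducibility of $T$ must be invoked, and it is also the point at which one verifies that $\nu$ genuinely carries full entropy rather than being merely Markov of some order.
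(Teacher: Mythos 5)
The first thing to note is that the paper contains no proof of this theorem at all --- it is imported verbatim from \cite{BCC-2012} (``Theorem 4'' there) --- so your attempt can only be judged on its own merits, not against an internal argument. Your skeleton is the natural and correct one: the fibre-product formula $\nu([w])=\rho^{-(n-1)}\,\mathbf{l}_{w_0}N_{w_0w_1}\cdots N_{w_{n-2}w_{n-1}}\,\mathbf{r}_{w_{n-1}}$, telescoping via \eqref{eq:coeff-when-N-markov-condition}, and reading off a $k$-step Markov measure. However, the obstacle you flag at the end is not a technicality, and your proposed cure for it fails: nontriviality of $\{V_J\}$ plus irreducibility of $T$ cannot force the alignment $\mathbf{l}_{w_0}N_{w_0w_1}\cdots N_{w_{k-2}w_{k-1}}\propto V_{w_0\cdots w_{k-1}}$. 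In the paper's own Blackwell example, take $V_1=(0\ \ 1)$ and $V_2=(1\ \ {-1})$: then $V_1N_{11}=0\cdot V_1$, $V_1N_{12}=0\cdot V_2$, $V_2N_{21}=0\cdot V_1$, $V_2N_{22}=1\cdot V_2$, so this nontrivial family satisfies the order-$1$ Markov condition, yet its scalar matrix $M=\left(\begin{smallmatrix}0&0\\0&1\end{smallmatrix}\right)$ bears no relation to $\nu$. Hence, with the Markov condition as literally stated (mere existence of some nontrivial family), the conclusion is unprovable; one must either define the $V_J$ canonically as the induced vectors $\mathbf{l}_{j_0}N_{j_0j_1}\cdots N_{j_{k-2}j_{k-1}}$ or add their proportionality to $V_J$ as a hypothesis. (Separately, your telescoped formula drops the terminal factor $V_{w_{n-k}\cdots w_{n-1}}\mathbf{r}_{w_{n-1}}$; this is harmless --- normalization then gives $Md=\rho d$ for the vector $d$ of terminal factors, which is exactly what identifies the transition probabilities with $stoch(M)$.)

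The second gap concerns maximality: it cannot be proved from the stated hypotheses because it is false without entropy preservation, and the finite-to-one-ness you import ``from the intended application'' is precisely the missing hypothesis, not a dispensable convenience. Blackwell's example again: there $h(X)=\log 2>\log g=h(Y)$, the factor is infinite-to-one, the (correctly aligned) Markov condition holds with $M=\left(\begin{smallmatrix}0&1\\2&1\end{smallmatrix}\right)$, and $\nu$ is the Markov measure with transition probabilities $\left(\begin{smallmatrix}0&1\\1/2&1/2\end{smallmatrix}\right)$ --- which is \emph{not} the Parry measure $\left(\begin{smallmatrix}0&1\\2-g&g-1\end{smallmatrix}\right)$ of the golden-mean shift, so $\nu\notin\mathcal{M}_{\max}(Y)$ even though $Y$ is a SFT and $\nu$ is Markov. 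Under the added finite-to-one hypothesis your maximality argument is sound (finite-to-one factor codes preserve both measure-theoretic and topological entropy, so $h_\nu(Y)=h_\mu(X)=h(X)=h(Y)$, and an irreducible SFT has a unique measure of maximal entropy), as is your support argument showing $Y$ is the $k$-step SFT determined by the admissible blocks of $\nu$. In short: right strategy and correct supporting lemmas, but the two points where you expressed unease are exactly where the statement as transcribed in this paper must first be repaired --- by pinning down $\{V_J\}$ canonically and by assuming $\phi$ finite-to-one --- before any proof can close.
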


To clarify the construction of $N$ and Theorem \ref{thm-sofic-measure-THM4}, we introduce an example which was initiated by Blackwell.

\begin{example}[Blackwell {\cite{Bla-1957}}]
Let $\mathcal{A}(X)=\{1,2,3\}, \mathcal{A}(Y)=\{1,2\}$ and the one-block map $\Phi: \mathcal{A}(X) \rightarrow \mathcal{A}(Y)$ be defined by $\Phi(1)=1,$ $\Phi(2) = \Phi(3) =2$. Let $\phi: X \rightarrow Y$ be the factor induced from $\Phi$, and the transition matrix of $X$ be
\begin{equation*}
A=\left(
\begin{array}{ccc}
0 & 1 & 1 \\
1 & 1 & 0 \\
1 & 0 & 1%
\end{array}%
\right) \mbox{.}
\end{equation*}%
This factor has been proven ({\cite[Example 2.7]{BP-2011}}) to be Markovian. Here we use Theorem \ref{thm-sofic-measure-THM4} to give a criterion for this property. Since
$E_{1}=\left\{ 1\right\} $ and $E_{2}=\left\{ 2,3\right\}$, we see that $\mathbf{m}=e_{2}=2$, and an extra pseudo vertex is needed for $E_1$. For such reason we introduce the new symbols and the corresponding sets $\widehat{E}_{1}$ and $\widehat{E}_{2}$ are as
follows.
\begin{eqnarray*}
D &=&\left\{ 1,2\right\} \times \left\{ 1,2\right\} =\left\{ \left(
1,1\right) ,\left( 1,2\right) ,\left( 2,1\right) ,\left( 2,2\right)
\right\}
, \\
\mbox{ }\widehat{E}_{1} &=&\left\{ 1=\left( 1,1\right) ,\left(
2,1\right) \right\} \mbox{, }\widehat{E}_{2}=\left\{ 2=\left(
1,2\right) ,3=\left( 2,2\right) \right\} .
\end{eqnarray*}%
Therefore,
\begin{equation*}
B = \left(
\begin{array}{cc}
N_{11} & N_{12} \\
N_{21} & N_{22}%
\end{array}%
\right) =\left(
\begin{array}{cccc}
0 & 0 & 1 & 1 \\
0 & 0 & 0 & 0 \\
1 & 0 & 1 & 0 \\
1 & 0 & 0 & 1%
\end{array}%
\right) .
\end{equation*}%
\begin{equation*}
N_{11}=\left(
\begin{array}{cc}
0 & 0 \\
0 & 0%
\end{array}%
\right) \mbox{, }N_{12}=\left(
\begin{array}{cc}
1 & 1 \\
0 & 0%
\end{array}%
\right) \mbox{, }N_{21}=\left(
\begin{array}{cc}
1 & 0 \\
1 & 0%
\end{array}%
\right) \mbox{, }N_{22}=\left(
\begin{array}{cc}
1 & 0 \\
0 & 1%
\end{array}%
\right) \mbox{.}
\end{equation*}%
Taking $V_{1}=(1\ 0) $ and $V_{2}=(1\ 1) $, one can easily check that $N=(N_{ij})_{i,j=1}^{2}$ satisfies the Markov condition of order $1$. Thus Theorem \ref{thm-sofic-measure-THM4} is applied to show that the factor is a Markov map.
\end{example}

% -------------------------------------------------------------
\subsection{Proof of Theorem \ref{main-thm-FSE}}

Proposition \ref{prop-factor-like-embed} asserts that the existence of a factor-like matrix for $T^{(1)}, T^{(2)}$ together with the topological conjugacy of $\phi^{(i)}$ infers there is a map $\overline{\pi}: \mathbf{Y}^{(i)} \to \mathbf{Y}^{(\overline{i})}$ that preserves topological entropy, where $i = 1, 2$, and $i + \overline{i} = 3$. A natural question is whether or not we can find a map connecting $\mathbf{Y}^{(1)}$ and $\mathbf{Y}^{(2)}$ under the condition neither $\phi^{(1)}$ nor $\phi^{(2)}$ is topological conjugacy. The answer is affirmative. First we define the product of scalar and alphabet.

\begin{definition}
Suppose $\mathcal{A}$ is an alphabet set. Let $\mathbf{A}$ be the free abelian additive group generated by $\mathcal{A} \cup \{\varnothing\}$, here $\varnothing$ is the identity element. For $\mathbf{k} \in \mathbb{Z}, \mathbf{a} \in \mathcal{A} \cup \{\varnothing\}$, we define an commutative operator $*$ by
$$
\mathbf{a} * \mathbf{k} = \mathbf{k} * \mathbf{a} = \left\{
           \begin{array}{ll}
             \mathbf{k}\mathbf{a}, & \hbox{if $\mathbf{a} \neq \varnothing$ and $\mathbf{k} \neq 0$;} \\
             \varnothing, & \hbox{otherwise.}
           \end{array}
         \right.
$$
\end{definition}

Suppose $S$ is an $m \times n$ symbolic matrix and $A$ is an $n \times k$ integral matrix. The product $S*A$ is defined by $(S*A)(p, q) = \sum_{i=1}^n S(p, i) * A(i, q)$ for $1 \leq p \leq m, 1 \leq q \leq k$. For simplicity we denote $S*A$ by $SA$. Similarly, we can define $A*S$ and denote by $AS$ for $m \times n$ integral matrix $A$ and $n \times k$ symbolic matrix $S$.

The following proposition, which is an extension of Proposition \ref{prop-factor-like-embed}, can be verified with a little modification of the proof of Proposition 3.15 in \cite{BCL-JDE2012}. Hence we omit the detail.

\begin{proposition}\label{prop-factor-like-Y-W}
Let $S^{(i)}$ be the symbolic transition matrix of $\mathbf{Y}^{(i)}$ for $i = 1, 2$. Suppose $E$ is a factor-like matrix such that $S^{(i)}E = E S^{(\overline{i})}$, then there exist maps $\pi: W^{(i)} \to W^{(\overline{i})}$ and $\overline{\pi}: \mathbf{Y}^{(i)} \to \mathbf{Y}^{(\overline{i})}$  that both preserve topological entropy, where $i + \overline{i} = 3$.
\end{proposition}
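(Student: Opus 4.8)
The plan is to read off from the factor-like matrix $E$ an underlying map $\epsilon$ on states, to decode the symbolic identity $S^{(i)}E = ES^{(\overline{i})}$ entrywise, and then to feed the resulting numerical intertwining into Proposition \ref{prop-factor-like-embed} to produce $\pi$, while using the label information that $S^{(i)}$ retains (but $T^{(i)}$ forgets) to descend $\pi$ to $\overline{\pi}$. First I would exploit the definition of factor-like: since $E$ is a nonnegative integral matrix whose rows each sum to $1$, every row of $E$ is a standard basis vector, so $E$ is the matrix of a function $\epsilon\colon\{1,\dots,m_i\}\to\{1,\dots,m_{\overline{i}}\}$ with $E(p,k)=1$ iff $k=\epsilon(p)$. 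Unwinding the $*$-product, the $(p,q)$ entry of $S^{(i)}E$ is $\sum_{k\in\epsilon^{-1}(q)}S^{(i)}(p,k)$, a sum in the free abelian group $\mathbf{A}$, whereas the $(p,q)$ entry of $ES^{(\overline{i})}$ is the single entry $S^{(\overline{i})}(\epsilon(p),q)$. The hypothesis therefore becomes
\begin{equation*}
\sum_{k\in\epsilon^{-1}(q)}S^{(i)}(p,k)=S^{(\overline{i})}(\epsilon(p),q)\qquad\text{for all }p,q.
\end{equation*}

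Next I would extract the combinatorial content. Each entry of $S^{(\overline{i})}$ is a single alphabet symbol or $\varnothing$, and in $\mathbf{A}$ distinct generators are independent while all coefficients occurring on the left are nonnegative; hence a sum of generators can equal a single generator (or $\varnothing$) only if at most one summand is nonzero. Consequently, for each $p$ the restriction of $\epsilon$ to the successors of $p$ in $W^{(i)}$ is injective onto the successors of $\epsilon(p)$ in $W^{(\overline{i})}$, and moreover the labels match:
\begin{equation*}
S^{(i)}(p,k)=S^{(\overline{i})}(\epsilon(p),\epsilon(k))\qquad\text{whenever }S^{(i)}(p,k)\neq\varnothing.
\end{equation*}
Passing to incidence matrices turns the displayed identity into $T^{(i)}E=ET^{(\overline{i})}$, so Proposition \ref{prop-factor-like-embed} at once furnishes the entropy-preserving one-block code $\pi\colon W^{(i)}\to W^{(\overline{i})}$ given by $\pi(x)_j=\epsilon(x_j)$.

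Finally I would build $\overline{\pi}$ from the label bookkeeping. Since $\phi^{(i)}$ reads off the labels of $S^{(i)}$ along a path and $\phi^{(\overline{i})}$ reads off those of $S^{(\overline{i})}$, the label-matching identity yields $\phi^{(\overline{i})}\circ\pi=\phi^{(i)}$ as sequences over the common symbol set. In particular $\phi^{(\overline{i})}(\pi(x))$ depends only on $\phi^{(i)}(x)$, so setting $\overline{\pi}(\phi^{(i)}(x)):=\phi^{(\overline{i})}(\pi(x))$ is consistent \emph{even though $\phi^{(i)}$ need not be injective}; this produces a well-defined continuous, shift-commuting map $\overline{\pi}\colon\mathbf{Y}^{(i)}\to\mathbf{Y}^{(\overline{i})}$ with $\overline{\pi}\circ\phi^{(i)}=\phi^{(\overline{i})}\circ\pi$, which is in fact the label-identity inclusion of $\mathbf{Y}^{(i)}$ into $\mathbf{Y}^{(\overline{i})}$. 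Being injective, continuous and shift-commuting with compact image, $\overline{\pi}$ is a conjugacy onto its image and therefore preserves topological entropy; alternatively this follows from the commuting square together with the entropy-preservation of $\pi$ and the finite-to-one property of $\phi^{(i)},\phi^{(\overline{i})}$.

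I expect the only genuine difficulty to lie in the passage of the previous two paragraphs, namely justifying that the symbolic identity forces both the out-edge bijection and the exact matching of labels. This is precisely where the argument departs from Proposition \ref{prop-factor-like-embed}: there the descent to $\overline{\pi}$ required $\phi^{(i)}$ to be a conjugacy, whereas here it is the label data carried by $S^{(i)}$, rather than the purely numerical $T^{(i)}$, that allows $\pi$ to descend to $\mathbf{Y}^{(i)}$ with no injectivity hypothesis on $\phi^{(i)}$; verifying that this extra structure survives the symbolic matrix product is the crux, and it is exactly the modification of the proof of Proposition~3.15 in \cite{BCL-JDE2012} alluded to above.
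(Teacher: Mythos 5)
Your proof is correct and is essentially the argument the paper has in mind: the paper omits the details, stating only that the proposition follows by ``a little modification of the proof of Proposition 3.15 in \cite{BCL-JDE2012}'' (Proposition \ref{prop-factor-like-embed} here), and your decoding of the factor-like matrix $E$ as a vertex map $\epsilon$, the free-abelian-group argument forcing the out-edge bijection with matching labels, the resulting numerical intertwining $T^{(i)}E = ET^{(\overline{i})}$ fed into Proposition \ref{prop-factor-like-embed}, and the label-matching descent $\phi^{(\overline{i})}\circ\pi = \phi^{(i)}$ giving $\overline{\pi}$ with no conjugacy hypothesis on $\phi^{(i)}$ is precisely that modification. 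Your further observation that $\overline{\pi}$ is then the entropy-preserving inclusion of $\mathbf{Y}^{(i)}$ into $\mathbf{Y}^{(\overline{i})}$ (as label shifts) is consistent with how the proposition is invoked in Example \ref{eg-Y1Y2-SFT}, where the two sofic shifts indeed coincide.
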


A factor map $\phi$ is \emph{almost invertible} if every doubly transitive point has exactly one preimage. Lemma \ref{lem-ai-sync-word} shows that the existence of a synchronizing word is a necessary and sufficient criterion whether $\phi$ is almost invertible.

\begin{lemma}\label{lem-ai-sync-word}
Suppose $\phi : X \to Y$ is a one-block factor map. Then $\phi$ is almost invertible if and only if $\phi$ has a synchronizing word.
\end{lemma}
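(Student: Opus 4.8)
The plan is to prove both implications in the natural setting of the paper, where $X$ is an irreducible shift of finite type and $\phi$ is a finite-to-one one-block factor map (after the standard recoding I may assume $X$ is $1$-step). By Theorem \ref{thm-diamond-inf2one}, being finite-to-one is equivalent to having no diamond, and this no-diamond property will be the engine of the argument. I first record that almost invertibility by itself forces finiteness: if $\phi$ had a diamond, the two differing points could be transported (by irreducibility) onto a preimage of a doubly transitive point, producing a doubly transitive point with at least two preimages; hence an almost invertible $\phi$ has no diamond, so the forward direction needs no extra hypothesis, while for the reverse direction I use that $\phi$ is finite-to-one, as all the covering maps $\phi^{(i)}$ in this paper are.

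For $(\Leftarrow)$, suppose $\phi$ has a synchronizing word $w$, all of whose preimage blocks terminate in a single symbol $a$. Let $y\in Y$ be doubly transitive and let $x,x'\in\phi^{-1}(y)$. Since $y$ is doubly transitive, $w$ occurs infinitely often to the left and to the right; at the terminal coordinate $p$ of every such occurrence the block $x_{[\cdot,p]}$ is a preimage of $w$, whence $x_p=a$, and likewise $x'_p=a$. Thus $x$ and $x'$ agree at a bi-infinite set of coordinates. For two consecutive agreement coordinates $p<q$ I claim $x_{[p,q]}=x'_{[p,q]}$: otherwise, since $x_p=x'_p=a$ and $x_q=x'_q=a$ and $X$ is $1$-step, the spliced point $z:=\dots x_{p}\,x'_{p+1}\cdots x'_{q-1}\,x_q\dots$ lies in $X$, satisfies $\phi(z)=y$, differs from $x$ in only finitely many coordinates, and is distinct from $x$; then $(x,z)$ is a diamond, contradicting the no-diamond property. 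Hence $x=x'$, so $y$ has a unique preimage and $\phi$ is almost invertible.

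For $(\Rightarrow)$, assume $\phi$ almost invertible, fix a doubly transitive $y$ with its unique preimage $x$, and set $a=x_0$. For a two-sided window consider $S_{N,M}:=\{u_0:\ u\ \text{is a preimage block of } y_{[-N,M]}\}\subseteq\mathcal A(X)$. Restricting a longer window to a shorter one shows $S_{N,M}$ is nonincreasing in each of $N,M$, and a compactness (diagonal) argument identifies $\bigcap_{N,M}S_{N,M}$ with the set of $0$th coordinates of bi-infinite preimages of $y$, which is $\{a\}$ by uniqueness. Therefore there are finite $N,M$ with $S_{N,M}=\{a\}$, i.e.\ the word $v:=y_{[-N,M]}$ has all preimage blocks carrying $a$ at its central coordinate; thus $v$ is a magic word with interior magic value $1$.

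It remains to promote this interior magic coordinate to a terminal one, that is, to produce a word all of whose preimage blocks agree at the last coordinate; this is the step I expect to be the main obstacle. The plan is to grow the window only to the left, examining $S^L_N:=\{u_0:\ u\ \text{is a preimage block of } y_{[-N,0]}\}$, which again decreases to the set of terminal symbols of the left-infinite preimages of $y_{(-\infty,0]}$. The crux is to show this limit is $\{a\}$: one must verify that every left-infinite preimage of $y_{(-\infty,0]}$ extends to a bi-infinite preimage of $y$, forcing it to coincide with $x$ and hence to terminate in $a$. Here I will combine irreducibility of $X$ with the no-diamond property to exclude left-infinite preimages whose terminal symbol has no admissible continuation over $y_{[1,\infty)}$: any such alternative terminal symbol, spliced against $x$ using the magic word $v$ as a synchronizing anchor, would create either a second preimage of a doubly transitive point or a diamond, both impossible. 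Once the limit is $\{a\}$, some finite $S^L_N=\{a\}$ exhibits $y_{[-N,0]}$ as the desired synchronizing word, completing the proof.
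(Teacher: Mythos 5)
Your backward direction (synchronizing word $\Rightarrow$ almost invertible) is correct, and it is a genuinely different argument from the paper's: you splice two preimages of a doubly transitive point at the coordinates forced by occurrences of the synchronizing word and invoke the no-diamond property, whereas the paper argues in one line that right-resolvingness extends the synchronizing word to a magic word with $d^*_\phi=1$ and then quotes $d_\phi = d^*_\phi$. Your version has the mild virtue of not needing right-resolvingness at all, only finite-to-one plus irreducibility.

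The forward direction, however, has a fatal gap, and it sits exactly at the step you flag as ``the main obstacle'': promoting the interior magic coordinate to a terminal one. This step is not merely hard under your hypotheses (one-block, finite-to-one, $X$ an irreducible $1$-step SFT, no diamond) --- it is false. Take $Y$ the full shift on $\{\alpha,\beta\}$, let $X$ be its $2$-block presentation with alphabet $\{\alpha\alpha,\alpha\beta,\beta\alpha,\beta\beta\}$ and transitions $(uv)\to(vw)$, and let $\Phi((uv))=u$. Then $\phi$ is a one-block conjugacy, hence finite-to-one, diamond-free, and almost invertible in the strongest sense; yet every word $w_1\cdots w_n\in B(Y)$ has exactly the two preimage blocks $(w_1w_2)\cdots(w_{n-1}w_n)(w_n\alpha)$ and $(w_1w_2)\cdots(w_{n-1}w_n)(w_n\beta)$, which disagree at the terminal coordinate, so no synchronizing word exists. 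This example also shows your specific rescue plan cannot work: the left-infinite preimage of $y_{(-\infty,0]}$ ending in $(y_0\beta)$ with $\beta\neq y_1$ genuinely fails to extend to a preimage of $y$, and no combination of irreducibility and no-diamond can exclude it, since the example satisfies every hypothesis you allow yourself.

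What is missing is the hypothesis that the lemma's statement leaves implicit but the paper's proof uses explicitly: $\phi$ is \emph{right-resolving}. (In the paper this is automatic, since each covering map $\phi^{(i)}\colon W^{(i)}\to\mathbf{Y}^{(i)}$ comes from a right-resolving symbolic transition matrix $S^{(i)}$; note that the counterexample above is left-resolving but not right-resolving, which is exactly where it escapes.) With right-resolvingness the forward direction collapses to the paper's one-liner: almost invertibility gives $d^*_\phi=1$, so some word $w$ has all preimages agreeing at a magic coordinate $i$; right-resolvingness then propagates that agreement forward along $w_{i+1}\cdots w_{|w|}$, so all preimages of $w$ already share the same terminal symbol and $w$ itself is synchronizing. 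Your compactness argument producing the interior magic coordinate is fine, but without right-resolvingness there is no way to finish, and with it the compactness argument is unnecessary.
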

\begin{proof}
If $\phi$ is almost invertible, then $d_{\phi}^* = 1$. Let $w$ be a magic word and $i$ be a magic coordinate. In other words, $d_{\phi}(w, i) = 1$. The fact that $\phi$ is right-resolving infers that $d_{\phi}(w, |w|) = 1$. Hence $w$ is a synchronizing word.

On the other hand, suppose $w$ is a synchronizing word. $\phi$ is right-resolving indicates $d_{\phi}(wa, |wa|) = 1$ for some $a$ such that $wa \in B(Y)$. That is, $wa$ is a magic word and $d_{\phi}^* = 1$. Therefore, $\phi$ is almost invertible.
\end{proof}

The proof of the first statement of Theorem \ref{main-thm-FSE} is done by Lemma \ref{lem-ai-sync-word} and the following theorem.

\begin{theorem}[See {\cite[Theorem 3.4]{MS-PJM2001}}]\label{thm-MS-max2max}
Suppose $\phi: X \to Y$ is a factor map and $X$ is an irreducible SFT. If $\phi$ is almost invertible, then $\phi: \mathcal{M}_{\max}(X) \to \mathcal{M}_{\max}(Y)$ is a bijection. Moreover, $h_{\mu}(X) = h_{\phi \mu}(Y)$ for $\mu \in \mathcal{M}_{\max}(X)$.
\end{theorem}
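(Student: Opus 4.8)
The plan is to deduce the statement from three standard ingredients: first, that almost invertibility forces $\phi$ to have degree one and hence to be finite-to-one; second, that the measure of maximal entropy is unique on each side; and third, that almost invertibility upgrades $\phi$ to an almost-everywhere bijection, and thus to a measure-theoretic isomorphism between the two maximal measures. To begin, I would record that by the definitions recalled just above, $\phi$ almost invertible means precisely $d_\phi = 1$, so in particular $\phi$ is finite-to-one; consequently $\phi$ preserves topological entropy and $h(X) = h(Y)$. Since $X$ is an irreducible SFT it carries a unique maximal measure $\mu$ (the Parry measure), and since $Y = \phi(X)$ is an irreducible sofic shift it likewise carries a unique maximal measure $\nu$. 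Hence $\mathcal{M}_{\max}(X) = \{\mu\}$ and $\mathcal{M}_{\max}(Y) = \{\nu\}$, and the entire statement reduces to proving that $\phi\mu = \nu$ together with the entropy identity $h_\mu(X) = h_\nu(Y)$.

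The heart of the argument is to promote almost invertibility to almost-everywhere invertibility. Let $D_Y$ denote the set of doubly transitive points of $Y$. Because $\nu$ is ergodic and, being the maximal measure of an irreducible shift, assigns positive measure to every cylinder, Birkhoff's theorem gives $\nu(D_Y) = 1$: a generic point meets every word infinitely often on both sides. On $D_Y$ the map $\phi$ is, by the definition of almost invertibility, a bijection from the shift-invariant Borel set $\phi^{-1}(D_Y)$ onto $D_Y$; since $\phi$ is continuous and injective there, its inverse is Borel by Lusin--Souslin. I would then pull $\nu$ back along this Borel inverse to obtain a shift-invariant measure $\tilde\mu$ on $X$ with $\phi\tilde\mu = \nu$, for which $\phi$ realizes a measure-preserving isomorphism $(X, \tilde\mu) \cong (Y, \nu)$.

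Isomorphic systems have equal measure-theoretic entropy, so $h_{\tilde\mu}(X) = h_\nu(Y) = h(Y) = h(X)$; hence $\tilde\mu$ is a maximal measure of $X$, and uniqueness forces $\tilde\mu = \mu$. Therefore $\phi\mu = \phi\tilde\mu = \nu$, and the same isomorphism yields $h_\mu(X) = h_{\phi\mu}(Y)$. Assembling this, the assignment $\mu \mapsto \phi\mu$ carries the unique element of $\mathcal{M}_{\max}(X)$ to the unique element of $\mathcal{M}_{\max}(Y)$, so it is automatically a bijection between these singletons, and the entropy equality for the maximal measure is exactly what was just produced.

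I expect the main obstacle to lie in the measure-theoretic step of the second paragraph: verifying that the a.e.\ inverse of $\phi$ is a genuinely Borel, shift-invariant map, so that $(X,\tilde\mu) \cong (Y,\nu)$ is a bona fide isomorphism rather than merely an a.e.\ pointwise inverse, and confirming carefully that $\nu(D_Y)=1$. The remaining inputs---entropy preservation under finite-to-one factor maps, and the uniqueness of the maximal measure on an irreducible SFT and on an irreducible sofic shift---are classical and may simply be cited. As an alternative to the isomorphism route, one could instead invoke the Abramov--Rokhlin relative entropy formula, whose fiber term vanishes for a finite-to-one map, to obtain $h_\mu(X) = h_{\phi\mu}(Y)$ directly and then conclude $\phi\mu = \nu$ by uniqueness; I would keep this in reserve in case the measurability bookkeeping proves delicate.
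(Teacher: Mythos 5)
The paper does not prove this statement at all: it is quoted directly from \cite[Theorem 3.4]{MS-PJM2001} and used as a black box, so there is no internal proof to compare yours against --- your proposal supplies an argument the paper omits. Judged on its own, your proof is correct in the setting at hand ($X$ an irreducible SFT, $Y=\phi(X)$ its sofic image), and it in fact proves slightly more than asked: $\phi$ realizes a measure-theoretic isomorphism between $(X,\mu)$ and $(Y,\phi\mu)$. Two steps that you assert do carry real content and should be made explicit. First, ``almost invertible $\Rightarrow$ degree one $\Rightarrow$ finite-to-one'': degree one is immediate from the paper's definitions, but finite-to-one-ness needs the diamond dichotomy (cf.\ Theorem \ref{thm-diamond-inf2one}); if $\phi$ were infinite-to-one it would have a diamond, i.e.\ two distinct paths $u\neq u'$ with equal initial states, terminal states, and labels, and splicing $u'$ in place of an occurrence of $u$ in a doubly transitive point $x_0$ of $X$ produces two distinct preimages of the doubly transitive point $\phi(x_0)$, contradicting almost invertibility. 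Second, your claim $\nu(D_Y)=1$ needs $\nu$ to be ergodic and fully supported; this rests on intrinsic ergodicity of irreducible sofic shifts (Weiss) and on the fact that a proper subshift of an irreducible sofic shift has strictly smaller entropy, both classical but citable facts rather than self-evident ones. With these patches, the Lusin--Souslin pullback argument goes through and yields $\phi\mu=\nu$, the entropy equality $h_\mu(X)=h_{\phi\mu}(Y)$, and the bijection (trivial, since both sets of maximal measures are singletons). Your reserve route --- vanishing relative (fiber) entropy for finite-to-one maps plus uniqueness of the maximal measure on both sides --- is equally valid and closer to the standard treatment; note that it uses only finite-to-one-ness, so what almost invertibility genuinely buys beyond that is exactly the isomorphism statement your main argument constructs.
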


Next we continue the proof of Theorem \ref{main-thm-FSE}.

Fix $i \in \{1, 2\}$. Recall that the metric $d^{(i)}: W^{(i)} \times W^{(i)} \to \mathbb{R}$ is given by
$$
d^{(i)}(x, y) = \sum_{j \in \mathbb{Z}} \frac{|x_j - y_j|}{m_i^{|j|+1}},
$$
for $x, y \in W^{(i)}$, where $m_i = |\mathcal{A}(W^{(i)})|$.

To formulate the explicit form of the Hausdorff dimension of the hidden and output spaces, we introduce the following from Pesin's well-known work.% In \cite{Pes-1997}, Pesin demonstrated the following theorem.

\begin{theorem}[See {\cite[Theorems 13.1 and 22.2]{P-1997}}]\label{thm-pesin-dim}
Let $(X, \sigma)$ be a shift space with $|\mathcal{A}(X)| = m$, and $0 < \lambda_1, \ldots, \lambda_m < 1$. Suppose $d$ is a metric defined on $X$. If there exist $K_1, K_2 > 0$ such that
\begin{align*}
K_1 \prod_{j=0}^{n_2} \lambda_{i_j} < &\mathrm{diam} [i_0, \ldots, i_{n_2}] < K_2 \prod_{j=0}^{n_2} \lambda_{i_j}, \\
K_1 \prod_{j=0}^{n_1} \lambda_{i_j} < &\mathrm{diam} [i_{-n_1}, \ldots, i_0] < K_2 \prod_{j=0}^{n_1} \lambda_{i_j},
\end{align*}
for any cylinder $I = [i_{-n_1}, \ldots, i_{n_2}], n_1, n_2 \geq 0$, then
$$
\dim X = - \frac{h_{\mu_{\lambda}}(X)}{\int_X \log \lambda_{i_0} d \mu_{\lambda}} = - 2 \frac{h_{\mu^{\pm}_{\lambda}}(X)}{\int_X \log \lambda_{i_0} d \mu^{\pm}_{\lambda}},
$$
where $\mu_{\lambda}$ is a maximal measure on $X$ and $\mu^{\pm}_{\lambda}$ is a maximal measure on the right-sided subspace $X^+$/left-sided subspace $X^-$.
\end{theorem}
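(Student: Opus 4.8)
The plan is to recast the computation of $\dim X$ as a problem in the thermodynamic formalism of the geometric potential $\psi(x) = \log\lambda_{i_0}$, to invoke Bowen's equation, and to treat the two-sided space through the product identification $I = I^+ \times I^-$ already recorded in the excerpt. First I would work on the one-sided space $X^+$. The diameter hypothesis says precisely that for every cylinder $I^+ = [i_0,\ldots,i_{n}]$ one has $\mathrm{diam}\, I^+ \asymp \prod_{j=0}^{n}\lambda_{i_j}$ with the uniform constants $K_1,K_2$; hence the cylinders of length $n$ form a cover of $X^+$ whose mesh tends to $0$ and whose $s$-th power sum is comparable to the partition function $Z_n(s) = \sum_{[i_0\cdots i_{n}]}\prod_{j=0}^n \lambda_{i_j}^s$. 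Since $\frac{1}{n}\log Z_n(s) \to P(s\psi)$, the topological pressure of $s\psi$, the whole problem reduces to locating the root of the pressure.

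Next I would establish Bowen's equation $\dim X^+ = s^+$, where $s^+$ is the unique solution of $P(s\psi) = 0$; uniqueness holds because $s \mapsto P(s\psi)$ is strictly decreasing, as each $\lambda_k < 1$. For the upper bound, if $s > s^+$ then $P(s\psi) < 0$, so $Z_n(s)$ decays exponentially and the cylinder covers give $\mathcal{H}^s(X^+) = 0$, whence $\dim X^+ \le s^+$. For the lower bound I would use the equilibrium state $\nu_s$ of $s\psi$: its Gibbs property gives $\nu_s(I^+) \asymp e^{-nP(s\psi)}\prod_j \lambda_{i_j}^s$, which at $s = s^+$ reads $\nu_{s^+}(I^+)\asymp (\mathrm{diam}\, I^+)^{s^+}$, and a standard mass-distribution (Frostman) argument then forces $\mathcal{H}^{s^+}(X^+) > 0$ and $\dim X^+ \ge s^+$.

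I would then read off the dimension formula from the Variational Principle. At $s = s^+$ the equilibrium state $\mu_\lambda := \nu_{s^+}$ satisfies $0 = P(s^+\psi) = h_{\mu_\lambda}(X^+) + s^+\int_{X^+}\log\lambda_{i_0}\,d\mu_\lambda$, so that $\dim X^+ = s^+ = -h_{\mu_\lambda}(X^+)/\int\log\lambda_{i_0}\,d\mu_\lambda$. In the case of interest, where the contraction is uniform ($\lambda_k \equiv 1/m$, as for the metrics $d^{(i)}$ on $W^{(i)}$), the integral is the constant $-\log m$ independent of the measure, so $\dim X^+$ is maximized exactly by the measure of maximal entropy; this identifies $\mu_\lambda$ with the maximal measure and yields $\dim X^+ = h_{\mu_\lambda}(X^+)/\log m$.

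Finally, for the two-sided statement and the factor $2$, I would exploit the identification of a two-sided cylinder $I$ with the product $I^+\times I^-$ together with the near-factorization \eqref{eq-mu-equal-mu+mu-}. The left-sided space $X^-$ is handled symmetrically and carries the same one-sided dimension $s^+$, and the product metric gives $\dim X = \dim X^+ + \dim X^- = 2s^+$. Since $\int\log\lambda_{i_0}\,d\mu$ depends only on the distribution of the zeroth coordinate, it takes the same value against $\mu_\lambda$ and against $\mu^\pm_\lambda$; combined with the splitting $h_{\mu_\lambda}(X) = h_{\mu^+_\lambda}(X^+) + h_{\mu^-_\lambda}(X^-) = 2h_{\mu^\pm_\lambda}(X^\pm)$ coming from the product structure, this yields both displayed expressions simultaneously. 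The hard part will be the lower bound: one must control $\nu_{s}$ on arbitrary balls rather than only on cylinders, and must upgrade the general inequality $\dim(X^+\times X^-) \ge \dim X^+ + \dim X^-$ to an equality. It is exactly here that the two-directional comparison hypotheses and the measure bound \eqref{eq-mu-equal-mu+mu-} are indispensable, since products of fractal sets can drop dimension without such uniform Gibbs and bounded-distortion control.
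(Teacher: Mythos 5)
You should first be aware that the paper contains no proof of Theorem \ref{thm-pesin-dim}: it is quoted, with attribution, from Pesin's book \cite{P-1997}, so the only meaningful comparison is with the cited source --- and your reconstruction follows essentially Pesin's own route: locate the root $s^+$ of Bowen's equation $P(s\log\lambda_{i_0})=0$ via partition sums over cylinders, get the upper bound from cylinder covers, the lower bound from the Gibbs property of the equilibrium state plus a Frostman (mass distribution) argument, and produce the factor $2$ from the identification $I \cong I^+\times I^-$ together with \eqref{eq-mu-equal-mu+mu-}. Your proposal is sound in the setting where the paper actually invokes the theorem, and it correctly exposes a looseness in the statement itself: with $\mu_\lambda$ read literally as a measure of \emph{maximal entropy}, the displayed formula can hold only when the equilibrium state of $s^+\log\lambda_{i_0}$ coincides with that measure --- e.g.\ when $\lambda_1=\cdots=\lambda_m$, which is the only case the paper uses (Theorems \ref{main-thm-FSE} and \ref{main-thm-ITO} always take $\lambda_k\equiv 1/m_i$); for genuinely non-constant $\lambda$ the variational principle gives $-h_\mu/\int\log\lambda_{i_0}\,d\mu\le s^+$ for \emph{every} invariant $\mu$, with equality at the equilibrium state of $s^+\log\lambda_{i_0}$, not at the measure of maximal entropy, so the statement as phrased is false in full generality and your specialization is the right repair. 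Of the two difficulties you flag, neither is fatal in context: (i) the ball-versus-cylinder issue disappears for the metrics the paper works with, since a $d$-ball of radius $r$ is, up to bounded distortion, a two-sided cylinder with $n_1\approx n_2$, so the Frostman bound on cylinders already controls balls (in Pesin's generality one instead uses a Moran cover of bounded multiplicity); and (ii) the product inequality upgrades to an equality either because Hausdorff and upper box dimension coincide for these spaces (both equal $s^+$), or more directly because \eqref{eq-mu-equal-mu+mu-} gives $\mu(I)\asymp r^{2s^+}$ on square two-sided cylinders of diameter $\asymp r$, which yields the two-sided lower bound at exponent $2s^+$ with no abstract product theorem at all. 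The one hypothesis you use silently is irreducibility (or specification) of the shift, needed for the Gibbs equilibrium state to exist; the paper supplies it by assuming $W^{(i)}$ irreducible, but note that the theorem as stated speaks of an arbitrary shift space, where your lower-bound mechanism would not run as written.
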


Suppose $\mu^{(i)}$ is a maximal measure of $W^{(i)}$. For any cylinder $I=[i_{-n_1}, \ldots, i_{n_2}]$, the diameter of $[i_0, \ldots, i_{n_2}]$ and $[i_{-n_1}, \ldots, i_0]$ are $\dfrac{1}{m_i^{n_2+1}}$ and $\dfrac{1}{m_i^{n_1+1}}$ respectively. Let $K_1=1, K_2=3$, and $\lambda_1 = \cdots = \lambda_{m_i} = \dfrac{1}{m_i}$, apply Theorem \ref{thm-pesin-dim} we have
$$
\dim W^{(i)} = - \frac{h_{\mu^{(i)}}(W^{(i)})}{\displaystyle\int_{W^{(i)}} \log \dfrac{1}{m_i} d \mu^{(i)}} = \frac{h_{\mu^{(i)}}(W^{(i)})}{\log m_i}  = 2 \frac{h_{\mu^{(i),\pm}}(W^{(i)})}{\log m_i}.
$$
Moreover, the one-to-one correspondence between $\mathcal{M}_{\max}(W^{(i)})$ and $\mathcal{M}_{\max}(\mathbf{Y}^{(i)})$ demonstrates that
\begin{align*}
\dim \mathbf{Y}^{(i)} &= \sup \{\frac{h_{\nu}(\mathbf{Y}^{(i)})}{\log n_i}: \nu \text{ is invariant on } \mathbf{Y}^{(i)}\} \\
 &= \frac{h_{\phi^{(i)}\mu^{(i)}}(\mathbf{Y}^{(i)})}{\log n_i} = \frac{h_{\mu^{(i)}}(W^{(i)})}{\log n_i}.
\end{align*}
The last equality comes from Theorem \ref{thm-MS-max2max}. This completes the proof of Theorem \ref{main-thm-FSE} part (ii).

Observe that
$$
\dim \mathbf{Y}^{(i)} = \frac{h_{\nu^{(i)}}(\mathbf{Y}^{(i)})}{\log n_i}
$$
indicates $\nu^{(i)}$ is a maximal measure on $\mathbf{Y}^{(i)}$. Since $W^{(i)}$ is irreducible, the maximal measure $\mu^{(i)}$ is unique. Hence there is a bijection $\overline{\pi}: \mathcal{M}_{\max}(\mathbf{Y}^{(i)}) \to \mathcal{M}_{\max}(\mathbf{Y}^{(\overline{i})})$ such that $\overline{\pi} \nu^{(i)} = \nu^{(\overline{i})}$. Therefore,
$$
\dim \mathbf{Y}^{(\overline{i})} = \frac{h_{\overline{\pi}\nu^{(i)}}(\mathbf{Y}^{(\overline{i})})}{\log n_{\overline{i}}} = \frac{h_{\nu^{(\overline{i})}}(\mathbf{Y}^{(\overline{i})})}{\log n_{\overline{i}}}.
$$
This completes the proof of Theorem \ref{main-thm-FSE}.

% -------------------------------------------------------------
\subsection{Proof of Theorem \ref{main-thm-ITO}}

Whether there exists a factor map connecting two spaces is always a concerning issue. In general, it is difficult to construct or to say such factor maps exist for a given pair of spaces. Proposition \ref{prop-factor-like-Y-W} proposes a methodology for constructing a connection between two spaces. Notably a map constructed via Proposition \ref{prop-factor-like-Y-W} preserves topological entropy. In other words, it only works for those spaces reaching the same topological entropy if we restrict the factor maps. In this subsection, we turn our attention to the factor maps connecting spaces with non-equal topological entropies.

Similar to the proof of Theorem \ref{main-thm-FSE}, demonstrating Theorem \ref{main-thm-ITO} relies mainly on the existence of a factor map. Instead of $\mathbf{Y}^{(1)}, \mathbf{Y}^{(2)}$, we start with examining whether there is a factor map from $W^{(i)}$ to $W^{(\overline{i})}$; note here that $i + \overline{i} = 3$.

\begin{theorem}\label{thm-inf-to-one-on-W}
Suppose $W^{(1)}$ and $W^{(2)}$ are irreducible with $h(W^{(1)}) \neq h(W^{(2)})$. Suppose $h(W^{(i)}) > h(W^{(\overline{i})})$, where $i + \overline{i} = 3$. Then there exists an infinite-to-one map $\pi: W^{(i)} \to W^{(\overline{i})}$ if one of the following is satisfied.
\begin{enumerate}[\bf a)]
\item $h(W^{(i)}) = h(\mathbf{Y})$ and there is a factor-like matrix $F$ such that $T^{(i)} F = F T$, where $T$ is the transition matrix of $\mathbf{Y}$.

\item $h(W^{(i)}) < h(\mathbf{Y})$.
\end{enumerate}
\end{theorem}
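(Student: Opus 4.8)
The plan is to prove only the \emph{existence} of a factor map $\pi : W^{(i)} \to W^{(\overline{i})}$; that any such $\pi$ is automatically infinite-to-one then comes for free. Indeed, since the layer projection $\phi^{(i)} : \mathbf{Y} \to \mathbf{Y}^{(i)}$ is a factor and the cover $\phi^{(i)} : W^{(i)} \to \mathbf{Y}^{(i)}$ is finite-to-one, we have $h(W^{(i)}) = h(\mathbf{Y}^{(i)}) \le h(\mathbf{Y})$; hence the two hypotheses \textbf{a)} $h(W^{(i)}) = h(\mathbf{Y})$ and \textbf{b)} $h(W^{(i)}) < h(\mathbf{Y})$ are exhaustive. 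Moreover, since a finite-to-one factor map preserves topological entropy (cf.\ Theorem \ref{thm-diamond-inf2one}), the strict inequality $h(W^{(i)}) > h(W^{(\overline{i})})$ forces every factor map $W^{(i)} \to W^{(\overline{i})}$ to be infinite-to-one and to carry a diamond. Thus it only remains to manufacture a factor map in each of the two cases.

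The backbone of the construction is a single \emph{bridge} factor map $\Psi : \mathbf{Y} \to W^{(\overline{i})}$. I would obtain $\Psi$ by lifting the layer projection $\mathbf{Y} \to \mathbf{Y}^{(\overline{i})}$ through the right-resolving cover $\phi^{(\overline{i})} : W^{(\overline{i})} \to \mathbf{Y}^{(\overline{i})}$: reading the layer-$\overline{i}$ labels of a point of $\mathbf{Y}$ via $\mathcal{L}^{(\overline{i})}$ and tracking the corresponding state of the presentation $S^{(\overline{i})}$ in \eqref{eq-symbolic-matrix} defines a sliding block code into $W^{(\overline{i})}$, whose surjectivity follows from the irreducibility of $W^{(\overline{i})}$. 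Since $h(\mathbf{Y}) \ge h(W^{(i)}) > h(W^{(\overline{i})})$, this $\Psi$ is itself infinite-to-one.

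For case \textbf{a)}, the factor-like matrix $F$ with $T^{(i)} F = F T$ is exactly the hypothesis of Proposition \ref{prop-factor-like-embed}, now with the solution space $\mathbf{Y}$ (transition matrix $T$) playing the role of $W^{(\overline{i})}$; running that construction yields an entropy-preserving map $g : W^{(i)} \to \mathbf{Y}$, which is onto because $h(W^{(i)}) = h(\mathbf{Y})$ and $\mathbf{Y}$ is irreducible. Setting $\pi := \Psi \circ g$ gives the desired factor map $W^{(i)} \to W^{(\overline{i})}$, onto as a composite of onto maps and infinite-to-one by the reduction above. For case \textbf{b)}, we cannot lift $W^{(i)}$ up into $\mathbf{Y}$, since $h(W^{(i)}) < h(\mathbf{Y})$ rules out any entropy-preserving map $W^{(i)} \to \mathbf{Y}$; instead I would build the lower-entropy factor $W^{(i)} \to W^{(\overline{i})}$ directly from the strict gap $h(W^{(i)}) > h(W^{(\overline{i})})$, using the presence of a self-loop (fixed point) in $W^{(\overline{i})}$ to remove all periodic-point obstructions, via a Boyle-type marker-and-filling code.

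The hard part will be case \textbf{b)}: producing an \emph{onto} code $W^{(i)} \to W^{(\overline{i})}$ from an entropy inequality alone requires a marker construction that simultaneously respects the periodic-point data of $W^{(\overline{i})}$, and one must verify that $W^{(\overline{i})}$ indeed carries a fixed point so that the divisibility constraints on periodic orbits are met automatically. A secondary but genuine technical point is the justification that the bridge map $\Psi$ of the second paragraph is well defined as a sliding block code and is surjective; this rests on the right-resolving, follower-separated nature of the presentation $S^{(\overline{i})}$, and is where I would invoke the covering-space results of \cite{BCL-JDE2012, BCLL-JDE2009}.
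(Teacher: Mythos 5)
Your reduction (any factor map $W^{(i)} \to W^{(\overline{i})}$ is forced to be infinite-to-one by the entropy gap and Theorem \ref{thm-diamond-inf2one}) is fine, but the construction you build on it has a genuine gap: the ``bridge'' map $\Psi : \mathbf{Y} \to W^{(\overline{i})}$ does not exist as you describe it. Right-resolving means that from a \emph{given} state of $S^{(\overline{i})}$ and a label there is at most one outgoing edge with that label; it does not let you determine the state from the label sequence. For a bi-infinite point of $\mathbf{Y}$ there is no initial state from which to start ``tracking'': the subset-construction state at time $n$ is the follower set of the entire left-infinite label past, which is not a function of any bounded window of coordinates. Hence your rule is not a sliding block code (it is not even continuous), except when every sufficiently long word of $\mathbf{Y}^{(\overline{i})}$ is synchronizing --- which is essentially the case where $\mathbf{Y}^{(\overline{i})}$ is a shift of finite type. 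In the strictly sofic case (see Example \ref{eg-FSE-AI}, where no length $k$ makes all $k$-words synchronizing) no such lift of the layer projection through $\phi^{(\overline{i})}$ need exist, and that is precisely the case this paper cares about. The paper never writes down a concrete map $\mathbf{Y} \to W^{(\overline{i})}$; instead it invokes the abstract existence criterion of Theorem \ref{thm-inf-to-1-SFT}, and the substantive work is the verification of the factor periodic point condition $P(\mathbf{Y}) \searrow P(W^{(\overline{i})})$: every cycle in the graph $G$ underlying $\mathbf{Y}$ induces, through the subset construction, a cycle in $G^{(\overline{i})}$ whose length divides the original one. That cycle-lifting argument is the key idea missing from your proposal.

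Your case \textbf{b)} has a second unsupported step: to apply a Boyle/Kitchens lower-entropy factor theorem directly to $W^{(i)} \to W^{(\overline{i})}$ you need $P(W^{(i)}) \searrow P(W^{(\overline{i})})$, and you propose to get it from a fixed point of $W^{(\overline{i})}$. Nothing in the MCNN structure guarantees such a fixed point (e.g.\ the incidence matrix of $S^{(1)}$ in Example \ref{eg-FSE-AI} has zero diagonal), so this is an assumption, not a verification; note also that your case \textbf{b)} argument never uses the hypothesis $h(W^{(i)}) < h(\mathbf{Y})$, a sign that the structure the hypothesis provides is not being exploited. The paper uses that hypothesis exactly here: it embeds $W^{(i)}$ into $\mathbf{Y}$ by Krieger's Embedding Theorem (the embedding periodic point condition again coming from a cycle argument, this time lifting cycles of $G^{(i)}$ to cycles of $G$), and then composes with the infinite-to-one factor $\mathbf{Y} \to W^{(\overline{i})}$ obtained as above, so that all periodic-point obstructions are absorbed by the solution space $\mathbf{Y}$ rather than by an assumed fixed point.
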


\begin{remark}\label{rmk-for-thm-inf-to-one-on-W}
\begin{enumerate}[\bf (i)]
\item Suppose $X, Y$ are two irreducible SFTs with $h(X) > h(Y)$. In \cite{Kit-1998}, Kitchens showed that if there is an infinite-to-one factor map from $X^+$ to $Y^+$, then there exists an infinite-to-one factor map $\pi: X \to Y$. This reduces the investigation of Theorem \ref{thm-inf-to-one-on-W} to the existence of an infinite-to-one map between the right-sided subspaces of $W^{(1)}$ and $W^{(2)}$.

\item Theorem \ref{thm-inf-to-one-on-W} reveals the existence of an infinite-to-one map between the hidden and output spaces whenever these two spaces hit different topological entropies; however, there are an infinite number of such maps general. In addition, it is difficult to find the explicit form of an infinite-to-one map. This is an important issue and is still open in the field of symbolic dynamical systems. It helps for the investigation of MCNNs if one can propose a methodology to find a concrete expression of an infinite-to-one map.
\end{enumerate}
\end{remark}

The following corollary comes immediately after Theorem \ref{thm-inf-to-one-on-W}.

\begin{corollary}
Under the same assumption of Theorem \ref{thm-inf-to-one-on-W}. Suppose furthermore that $|\mathcal{A}(W^{(i)})| \geq |\mathcal{A}(\mathbf{Y})|$ and \textbf{a)} is satisfied. Then $\pi: W^{(i)} \to W^{(\overline{i})}$ is an infinite-to-one factor map.
\end{corollary}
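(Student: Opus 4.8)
The plan is to use Theorem~\ref{thm-inf-to-one-on-W} as a black box: under hypothesis \textbf{a)} it already produces an infinite-to-one map $\pi\colon W^{(i)}\to W^{(\overline i)}$, so the only thing left for the corollary is to check that this $\pi$ is onto. I would make this explicit by factoring $\pi$ through the solution space,
\[
W^{(i)}\xrightarrow{\ \psi\ }\mathbf{Y}\xrightarrow{\ \rho\ }W^{(\overline i)},
\]
where $\psi\colon W^{(i)}\to\mathbf{Y}$ is the entropy-preserving (hence bounded-to-one) map built from the factor-like matrix $F$ of hypothesis \textbf{a)} --- this is the same construction as in Proposition~\ref{prop-factor-like-embed}, now read with the solution space $\mathbf{Y}$, of transition matrix $T$, in the role of the target --- and $\rho\colon\mathbf{Y}\to W^{(\overline i)}$ is the infinite-to-one factor map supplied by the existence part of the theorem (possible because $h(\mathbf{Y})=h(W^{(i)})>h(W^{(\overline i)})$). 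As a factor map $\rho$ is by definition onto, so the corollary reduces to the single assertion that $\psi$ is surjective.

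To prove $\psi$ onto I would first analyse the symbol map it induces. Because $F$ is factor-like of size $m_i\times N$ (with $m_i=|\mathcal{A}(W^{(i)})|$ and $N=|\mathcal{A}(\mathbf{Y})|$), each of its rows has a single nonzero entry, equal to $1$, so $F$ is the incidence matrix of a map $f\colon\mathcal{A}(W^{(i)})\to\mathcal{A}(\mathbf{Y})$. I claim $f$ is onto. Indeed, if some state $q_0$ were omitted, the $q_0$-column of $F$ would vanish, and equating the $q_0$-columns on the two sides of $T^{(i)}F=FT$ gives $T(f(p),q_0)=0$ for every $p$; hence every predecessor of $q_0$ in the graph of $T$ also lies outside $\mathrm{im}(f)$. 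Thus $\mathcal{A}(\mathbf{Y})\setminus\mathrm{im}(f)$ is closed under taking predecessors, and irreducibility of $T$ forces this set to be empty --- a contradiction. The bound $|\mathcal{A}(W^{(i)})|\geq|\mathcal{A}(\mathbf{Y})|$ is precisely what allows $f$ to be surjective at the level of symbols.

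It remains to promote symbol-surjectivity of $f$ to genuine surjectivity of $\psi$, and here hypothesis \textbf{a)} does the work. The image $\psi(W^{(i)})$ is a subshift of $\mathbf{Y}$, and since $\psi$ is bounded-to-one, $h(\psi(W^{(i)}))=h(W^{(i)})=h(\mathbf{Y})$. Because a proper subshift of an irreducible SFT has strictly smaller entropy, $\psi(W^{(i)})$ of full entropy must equal all of $\mathbf{Y}$. Composing the two surjections, $\pi=\rho\circ\psi$ is a continuous, shift-commuting map onto $W^{(\overline i)}$, i.e.\ a factor map; that it is infinite-to-one is inherited from $\rho$ (equivalently, from the entropy drop together with Theorem~\ref{thm-diamond-inf2one}). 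This yields the corollary.

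The step I expect to be the main obstacle is this last promotion: the clean entropy argument of the previous paragraph presupposes that $\mathbf{Y}$ (equivalently $T$) is irreducible, which is not stated in the hypotheses. If $\mathbf{Y}$ is reducible, $\psi(W^{(i)})$ can only be its irreducible component of maximal entropy, and one must then verify that $\rho$ still surjects onto $W^{(\overline i)}$ when restricted to that component --- which amounts to checking that no symbol or transition of $W^{(\overline i)}$ is lost. A secondary subtlety is confirming that the infinite-to-one map delivered by Theorem~\ref{thm-inf-to-one-on-W} is indeed of the composed form $\rho\circ\psi$, rather than some other entropy-dropping map that might miss part of $W^{(\overline i)}$; tracing through the proof of the theorem, in which $F$ is exactly the datum used, should settle this.
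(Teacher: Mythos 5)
Your proposal is correct and is essentially the paper's own (implicit) argument: the paper states this corollary without a separate proof because, under hypothesis \textbf{a)}, the map constructed in the proof of Theorem \ref{thm-inf-to-one-on-W} is exactly your composition $\rho\circ\psi$ (there written $\Phi^{(1)}\circ\Phi^{(2)}$), with $\rho=\Phi^{(1)}$ onto by Theorem \ref{thm-inf-to-1-SFT}, so the only content of the corollary is surjectivity of the first leg $\psi$, which is what you supply. Two clarifications on the obstacles you flag. The irreducibility of $\mathbf{Y}$ is indeed absent from the stated hypotheses, but this is not a defect peculiar to your argument: the paper's own proof of Theorem \ref{thm-inf-to-one-on-W} already applies Theorem \ref{thm-inf-to-1-SFT} with $\mathbf{Y}$ as the domain, and that theorem requires both shifts to be irreducible SFTs, so irreducibility of $\mathbf{Y}$ is a standing implicit assumption of the theorem and corollary alike rather than a gap you must close. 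Separately, note that your symbol-surjectivity argument for $f$ uses only the relation $T^{(i)}F=FT$ together with irreducibility of $T$, and never the hypothesis $|\mathcal{A}(W^{(i)})|\geq|\mathcal{A}(\mathbf{Y})|$, which is merely a necessary consequence of surjectivity of $f$; so, as written, your proof actually shows the cardinality hypothesis is redundant once $\mathbf{Y}$ is irreducible --- logically harmless (you prove more with less), but your claim that the bound is ``precisely what allows $f$ to be surjective'' misdescribes your own argument.
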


Suppose $X$ is a shift space. Let $P(X)$ denote the collection of periodic points in $X$ and let $P_n(X)$ be the set of periodic points with period $n$. Given two shifts $X$ and $Y$, let $q_n(X)$ and $q_n(Y)$ be the cardinality of $\cup_{k \geq n} P_k(X)$ and $\cup_{k \geq n} P_k(Y)$, respectively. If $q_n(X) \leq q_n(Y)$ for $n \geq 1$, then we call it an \emph{embedding periodic point condition}, and write it as $P(X) \hookrightarrow P(Y)$. Embedding Theorem asserts a necessary and sufficient condition whether there exists an injective map between $X$ and $Y$.

\begin{theorem}[Embedding Theorem]
Suppose $X$ and $Y$ are irreducible SFTs. There is an embedding map $\phi: X \to Y$ if and only if $h(X) < h(Y)$ and $P(X) \hookrightarrow P(Y)$.
\end{theorem}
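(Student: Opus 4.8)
The plan is to identify this statement as Krieger's Embedding Theorem and to treat its two implications separately, since the forward direction is elementary while the converse carries essentially all the difficulty. Passing to higher block presentations if necessary, I may assume $X$ and $Y$ are given as irreducible $1$-step SFTs (vertex shifts). I read the periodic point condition $P(X)\hookrightarrow P(Y)$ in its standard form, namely $p_n(X)\le p_n(Y)$ for every $n\ge 1$, where $p_n$ counts the points of least period $n$; this is equivalent to the existence of a shift-commuting injection of the periodic points of $X$ into those of $Y$.

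For necessity, suppose $\phi\colon X\to Y$ is an embedding. Being injective, continuous and shift-commuting on a compact space, $\phi$ is a conjugacy onto its image $\phi(X)$, so $h(X)=h(\phi(X))$. Since the strict inequality $h(X)<h(Y)$ is what must be derived, I first observe that $\phi(X)$ is necessarily a \emph{proper} subshift of $Y$: were it all of $Y$, then $X$ and $Y$ would be conjugate and the stated strict inequality would fail, so the operative notion of embedding here is the proper one. A proper subshift of an irreducible SFT has strictly smaller entropy (a standard consequence of Perron--Frobenius applied to the transition matrix), whence $h(X)<h(Y)$. For the periodic points, injectivity sends the $p_n(X)$ points of least period $n$ to distinct points of the same least period in $Y$, giving $p_n(X)\le p_n(Y)$ for all $n$; this is exactly $P(X)\hookrightarrow P(Y)$.

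The converse is the substantive content, and I would carry it out by the marker-and-encoding method. First I would reduce to the case where $Y$ is mixing, using the cyclic decomposition of the irreducible $Y$ and matching the period structure of $X$ into that of $Y$ via the periodic point hypothesis. The construction then rests on two resources: a \emph{marker lemma}, which selects in each $x\in X$ a sparse, syntactically detectable, shift-equivariant set of coordinates whose gap lengths are controlled, and the entropy gap $h(X)<h(Y)$, which guarantees that for all sufficiently large block length $N$ the number of admissible $Y$-words of length $N$ dominates the number of $X$-words of length $N$, leaving room to inject $X$-blocks into $Y$-blocks between consecutive markers. I would pre-assign images to the periodic points of $X$ (finitely many of each period) using $P(X)\hookrightarrow P(Y)$, and then extend to the aperiodic points by block-encoding, finally verifying that the encoding is consistent across markers and globally injective rather than merely injective on individual blocks.

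The main obstacle is precisely this converse construction: making the marker selection simultaneously shift-commuting, finitely detectable, and compatible with the pre-assigned periodic-point images, while arranging that distinct orbits never collide after encoding. This combinatorial balancing act is the heart of Krieger's theorem, and rather than reproduce its full argument I would invoke it as a known result (Theorem~10.1.1 in \cite{LM-1995}); the only verification then needed when applying it to our spaces is that they are irreducible SFTs, which was established earlier, so that the hypotheses $h(X)<h(Y)$ and $P(X)\hookrightarrow P(Y)$ coincide exactly with Krieger's.
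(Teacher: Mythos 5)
The paper itself offers no proof of this statement---it is the classical Krieger Embedding Theorem, recorded as known background---so your treatment, which proves the elementary necessity direction and defers the marker-and-encoding construction of sufficiency to the standard reference (Theorem 10.1.1 of Lind--Marcus), is correct and takes essentially the same approach. Your remark that ``embedding'' must be read as \emph{proper} embedding (otherwise the identity map with $Y = X$ falsifies the ``only if'' direction) is a legitimate and indeed necessary repair of the statement as written in the paper.
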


A forthcoming question is the existence of a factor map between $X$ and $Y$. Like the embedding periodic point condition, the \emph{factor periodic point condition} indicates that, for every $x \in P_n(X)$, there exists a $y \in P_m(Y)$ such that $m$ is a factor of $n$, and is denoted by $P(X) \searrow P(Y)$.

\begin{theorem}[See {\cite[Theorem 4.4.5]{Kit-1998}}]\label{thm-inf-to-1-SFT}
Suppose $X$ and $Y$ are irreducible SFTs. There exists an infinite-to-one factor code $\phi: X \to Y$ if and only if $h(X) > h(Y)$ and $P(X) \searrow P(Y)$.
\end{theorem}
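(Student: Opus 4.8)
The plan is to establish the two implications separately, treating the forward (necessity) direction as routine and devoting the bulk of the effort to the reverse (sufficiency) direction, which demands an explicit coding construction of Boyle type.

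For necessity, suppose $\phi\colon X \to Y$ is an infinite-to-one factor code between the irreducible SFTs. A factor code never increases entropy and is finite-to-one precisely when it preserves entropy, so an infinite-to-one $\phi$ cannot satisfy $h(X) = h(Y)$; since entropy cannot go up under a factor, we are forced into $h(X) > h(Y)$ (this is the content of the dichotomy recorded in Theorem \ref{thm-diamond-inf2one}). The periodic point condition is then automatic: a factor code commutes with the shift, so for $x \in P_n(X)$ we have $\sigma^n \phi(x) = \phi(\sigma^n x) = \phi(x)$, whence $\phi(x)$ is periodic with least period $m \mid n$. Taking $y = \phi(x) \in P_m(Y)$ yields $P(X) \searrow P(Y)$, so no surjectivity is even needed in this direction.

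For sufficiency, assume $h(X) > h(Y)$ and $P(X) \searrow P(Y)$, and construct an infinite-to-one factor code in stages. First I would reduce to the mixing case by passing to the cyclic decomposition of each irreducible SFT into its period-class components, on which the appropriate power of $\sigma$ is mixing, and then build the code component by component, using the periodic point condition to match the periods across components. Second, after recoding $X$ to a higher-block presentation, I would invoke a marker lemma to produce a marker set whose occurrences are mutually separated yet syntactically frequent, so that every doubly infinite point of $X$ decomposes canonically into marker blocks interleaved with \emph{filler} words. Because $h(X) > h(Y)$, for all sufficiently long filler lengths the number of admissible $X$-fillers strictly exceeds the number of $Y$-words of comparable length; this entropy surplus is exactly the room needed to define a surjective block map that sends fillers onto $Y$-blocks while routing the markers to fixed $Y$-images. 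Finally I would pre-assign the periodic orbits of $X$ (finitely many of each bounded period) to periodic orbits of $Y$ using $P(X) \searrow P(Y)$, then extend this assignment to a globally consistent sliding block code that agrees with the pre-assignment on periodic points and remains onto by the counting above.

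The hard part is this last stage: reconciling the local block map with the rigid requirement that the code commute with the shift on \emph{every} periodic orbit simultaneously, all while staying surjective. The periodic point condition $P(X) \searrow P(Y)$ is precisely what makes a consistent periodic assignment possible, but weaving it together with the marker decomposition — so that no collision arises between the coded fillers and the forced values on periodic points, and so that surjectivity is preserved — is the delicate combinatorial core of the argument. This gluing problem is the substance of Boyle's Lower Entropy Factor Theorem; the entropy gap guarantees feasibility and the periodic point condition guarantees compatibility, but reconciling the two is where any complete proof must do its real work.
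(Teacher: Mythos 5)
First, note what you are being compared against: the paper does not prove this statement at all. It is quoted as a known result --- Kitchens' Theorem 4.4.5, which is (the infinite-to-one form of) Boyle's Lower Entropy Factor Theorem --- and is used purely as a black box in the proof of Theorem \ref{thm-inf-to-one-on-W} and in Examples \ref{eg-ITO-Y1-SFT} and \ref{eg-ITO-sofic}. So there is no in-paper argument to match your proposal against; the only question is whether your blind proof stands on its own.

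It does not, for one concrete reason. Your necessity direction is complete and correct: a factor code cannot increase entropy, finite-to-one is equivalent to entropy-preserving for irreducible SFTs, and the image of a period-$n$ point is periodic with least period dividing $n$, which gives $P(X) \searrow P(Y)$ exactly as the paper defines it. But your sufficiency direction is a plan, not a proof. You name the right machinery --- reduction to the mixing case, a marker lemma, the surplus of fillers forced by $h(X) > h(Y)$, a pre-assignment of periodic orbits via $P(X) \searrow P(Y)$ --- and then close by saying that reconciling these ``is the substance of Boyle's Lower Entropy Factor Theorem'' and is where any complete proof must do its real work. That concedes the issue: the statement you were asked to prove \emph{is} that theorem, so deferring the gluing step to it is circular; the delicate construction (a shift-commuting block code that is surjective, agrees with the periodic pre-assignment, and is well-defined where marker blocks abut) is exactly what is absent. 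Two further points would also need attention in a genuine write-up: surjectivity does not follow merely from ``more fillers than $Y$-words'' (one must arrange that every point of $Y$, including those with rare or absent marker structure, is hit), and the reduction to the mixing case requires verifying that the periodic point condition descends compatibly to the cyclic period classes of $X$ and $Y$, which is not automatic.
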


\begin{proof}[Proof of Theorem \ref{thm-inf-to-one-on-W}]
Without the loss of generality, we may assume that $h(W^{(1)}) < h(W^{(2)})$. It suffices to demonstrate there is an infinite-to-one map from $W^{(2), +}$ to $W^{(1), +}$ due to the observation in Remark \ref{rmk-for-thm-inf-to-one-on-W} (i). For the ease of notation, the spaces in the upcoming proof are referred to as right-sided subspaces.

Suppose that condition \textbf{a)} is satisfied. The existence of factor-like matrix $F$ such that $T^{(2)} F = F T$ implies there is a map $\Phi^{(2)}: W^{(2)} \to \mathbf{Y}$.

Recall that the graph representation $G^{(1)}$ of $W^{(1)}$ is obtained by applying subset construction to $(G, \mathcal{L}^{(1)})$. Without the loss of generality, we assume that $G^{(1)}$ is essential. That is, every vertex in $G^{(1)}$ is treated as an initial state of one edge and as a terminal state of another. Suppose $w = w_1 \cdots w_n$ is a cycle in $G$. If the initial state $i(w_k)$ of $w_k$ is a vertex in $G^{(1)}$ for $k = 1, \ldots, n$, then $w$ is also a cycle in $G^{(1)}$.

Assume $k$ is the only index that either $i(w_k)$ or $t(w_k)$ is not a vertex in $G^{(1)}$, where $t(e)$ denotes the terminal vertex of the edge $e$. First we consider that only one of these two vertices is not in $G^{(1)}$. For the case that $i(w_k)$ is not a vertex in $G^{(1)}$, there is a vertex, say $v_k$, in $G^{(1)}$ so that $v_k$ is a grouping vertex which contains $i(w_k)$.\footnote{If fact, each vertex in $G^{(1)}$ is the grouping of one or more vertices in $G$, and so is $G^{(2)}$. The reader is referred to \cite{BCLL-JDE2009} for more details.} Hence there is an edge $\overline{w}_{k-1}$ in $G^{(1)}$ such that $i(\overline{w}_{k-1}) = i(w_{k-1})$ and $t(\overline{w}_{k-1}) = v_k$. In other words, there is an edge in $G^{(1)}$ that can be related to $w_{k-1}$. Moreover, there is an edge $(v_k, t(w_k))$ in $G^{(1)}$ if $t(w_k)$ is a vertex in $G^{(1)}$. Hence there is a cycle in $G^{(1)}$ that corresponds to $w$. The case that $t(w_k)$ is not a vertex in $G^{(1)}$ can be conducted in an analogous discussion. For the case that both the initial and terminal states of $w_k$ are not in $G^{(1)}$, combining the above demonstration infers there is a new vertex $v_{k+1}$ and two new edges $e_k=(v_k, v_{k+1}), e_{k+1} = (v_{k+1}, t(w_{k+1}))$ in $G^{(1)}$. That is, there is still a cycle in $G^{(1)}$ that corresponds to $w$.

Repeating the above process if necessary, it is seen that, for every cyclic path in $G$ with length $n$, there is an associated cyclic path in $G^{(1)}$ with length $m$ and $m$ divides $n$. Theorem \ref{thm-inf-to-1-SFT} asserts there exists an infinite-to-one factor $\Phi^{(1)}: \mathbf{Y} \to W^{(1)}$. Let $\pi = \Phi^{(1)} \circ \Phi^{(2)}$. Then $\pi$ is an infinite-to-one map from $W^{(2)} \to W^{(1)}$ by Theorem \ref{thm-diamond-inf2one}.

Next, for another case, suppose that condition \textbf{b)} is satisfied. It suffices to demonstrate the existence of an embedding map from $W^{(2)}$ to $\mathbf{Y}$. The elucidation of the existence of a map from $W^{(2)}$ to $\mathbf{Y}$ can be performed via a similar but converse argument as with the discussion of $\Phi^{(1)}$. Hence we omit the details. Since the graph representation $G^{(2)}$ of $W^{(2)}$ comes from applying subset construction to $(G, \mathcal{L}^{(2)})$, it can be verified that every periodic point in $W^{(2)}$ corresponds to a cyclic path in $G^{(2)}$, and, for every cyclic path in $G^{(2)}$, we can illustrate a cyclic path in $G$. The Embedding Theorem demonstrates the existence of an embedding map $\overline{\Phi}^{(2)}: W^{(2)} \to \mathbf{Y}$.

This completes the proof.
\end{proof}

Once we demonstrate the existence of a factor map $\pi: W^{(i)} \to W^{(\overline{i})}$, the proof of Theorem \ref{main-thm-ITO} can be performed via analogous method as the proof of Theorem \ref{main-thm-FSE}. Hence we skip the proof. Instead, it is interesting if there is a criterion to determine whether $\pi$ is uniform.

\begin{theorem}\label{thm-uniform-iff-W}
Suppose $N$, obtained from $\pi$ as defined in the previous subsection, satisfies the Markov condition of order $k$. Define
$$
M = [m_{J(0, k-1), J(1, k)}]_{J \in W^{(1)}_{k+1}},
$$
here $m_{J(0, k-1), J(1, k)}$ is defined by \eqref{eq:coeff-when-N-markov-condition}. Then $\pi$ is uniform if and only if
\begin{equation}\label{eq-uniform-cond-on-W}
\rho_M = \dfrac{\rho^{(2)}}{\rho^{(1)}}
\end{equation}
where $\rho^{(i)}$ is the spatial radius of the transition matrix $T^{(i)}$ of $W^{(i)}$ for $i = 1, 2$.
\end{theorem}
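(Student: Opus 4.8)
The plan is to reduce the uniformity of $\pi$ to a single entropy identity and then read that identity off from the spectral radius $\rho_M$. Since $W^{(2)}$ is an irreducible SFT it carries a \emph{unique} maximal measure $\mu^{(2)}$, so by the definition of uniformity it suffices to test the one push-forward $\nu := \pi\mu^{(2)}$: the map $\pi:W^{(2)}\to W^{(1)}$ is uniform precisely when $\nu$ is the maximal measure of $W^{(1)}$, equivalently (recalling $h(W^{(1)})=\log\rho^{(1)}$) when $h_\nu(W^{(1)})=\log\rho^{(1)}$. Because $\nu$ is a measure carried by $W^{(1)}$ one always has $h_\nu(W^{(1)})\le\log\rho^{(1)}$, so the whole statement comes down to computing $h_\nu(W^{(1)})$ exactly and comparing it with $\log\rho^{(1)}$.

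The structural input is Theorem \ref{thm-sofic-measure-THM4}: the hypothesis that $N$ satisfies the Markov condition of order $k$ guarantees that the a priori merely sofic measure $\nu$ is in fact Markov, presented on the $k$-block recoding of $W^{(1)}$ with transition matrix $M=[m_{J(0,k-1),J(1,k)}]$. This replaces $\nu$ by a genuine Markov measure whose entropy is governed by the Perron--Frobenius data of the single nonnegative matrix $M$, and in particular makes $\rho_M$ the relevant spectral quantity.

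Next I would compute $h_\nu(W^{(1)})$ by combining two exponential scales. On one hand, the Parry measure $\mu^{(2)}$ assigns to every admissible word $w$ of length $n$ a mass $\mu^{(2)}([w])\asymp(\rho^{(2)})^{-n}$, the boundary eigenvector factors being uniformly bounded exactly as in \eqref{eq-mu-equal-mu+mu-}. On the other hand, for $j\in W^{(1)}$ of length $n$ the push-forward mass $\nu([j])=\sum_{w\in\pi^{-1}(j)}\mu^{(2)}([w])$ is the Parry mass of a single preimage times the number of $\pi$-preimages of $j$, and the Markov condition forces this preimage multiplicity to telescope through the matrices $N_J$ into products of the scalars $m_{\cdot,\cdot}$. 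Making the telescoping explicit through the linear representation of $\nu$ in Proposition \ref{prop-LR-pushforward-LR}, identifying the resulting growth rate via the Perron eigenvector of $M$, and invoking the Shannon--McMillan--Breiman theorem, I would obtain
$$
h_\nu(W^{(1)}) = \log\rho^{(2)} - \log\rho_M .
$$

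Combining the two computations yields the claim at once: $\pi$ is uniform if and only if $\log\rho^{(2)}-\log\rho_M=\log\rho^{(1)}$, i.e. $\rho_M=\rho^{(2)}/\rho^{(1)}$, which is \eqref{eq-uniform-cond-on-W}. Moreover the a priori bound $h_\nu(W^{(1)})\le\log\rho^{(1)}$ translates into $\rho_M\ge\rho^{(2)}/\rho^{(1)}$, so $\rho^{(2)}/\rho^{(1)}$ is the minimal admissible value of $\rho_M$ and equality singles out exactly the entropy-preserving case; this settles both directions of the equivalence simultaneously. The hard part will be the displayed entropy computation: proving rigorously that the $\nu$-typical exponential growth rate of the preimage multiplicities is \emph{precisely} $\log\rho_M$, not merely bounded by it. This demands uniform control of the auxiliary vectors $V_J$ in the telescoping, so that the products of the $N_J$ are comparable to products of the $m_{\cdot,\cdot}$ up to a subexponential factor, together with the identification of the resulting Birkhoff average with $\log\rho_M$ via the Perron eigenvector of $M$; the irreducibility of $W^{(2)}$ and the order-$k$ Markov condition are exactly what make this identification go through.
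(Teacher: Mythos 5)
Your opening reduction is sound: since $W^{(2)}$ is irreducible its maximal measure $\mu^{(2)}$ is unique, so uniformity of $\pi$ is equivalent to the single identity $h_{\nu}(W^{(1)}) = \log\rho^{(1)}$ for $\nu = \pi\mu^{(2)}$, and invoking Theorem \ref{thm-sofic-measure-THM4} to make $\nu$ Markov is the right use of the paper's machinery. Note also that the paper contains no proof to compare against: it only asserts that the statement follows from Proposition 6.1 of \cite{BT-TAMS1984} ``with a little modification''. So your argument is necessarily an independent route; unfortunately it fails at exactly the step you flag as the hard part, and not merely for lack of detail.

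The identity $h_{\nu}(W^{(1)}) = \log\rho^{(2)} - \log\rho_M$ is false. Your telescoping yields the two-sided Gibbs estimate $\nu([J]) \asymp (\rho^{(2)})^{-n}\prod_{i=0}^{n-1} m_{j_i j_{i+1}}$ for $J = j_0\cdots j_n$ (this is where the vectors $V_J$ must be controlled, as you say). Shannon--McMillan--Breiman plus Birkhoff then give $h_{\nu} = \log\rho^{(2)} - \int \log m \, d\nu$: the $\nu$-typical growth rate of the preimage multiplicities is the \emph{Birkhoff average} $\int \log m\, d\nu$, not the \emph{pressure} $\log\rho_M$, and these differ by exactly $h_{\nu}$. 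Indeed, the Gibbs estimate says $\nu$ is an equilibrium state of the potential $\log m$ on $W^{(1)}$, so $h_{\nu} + \int\log m\, d\nu = P_{W^{(1)}}(\log m) = \log\rho_M$; substituting this into your formula collapses it to the tautology $\rho_M = \rho^{(2)}$ and yields no information about $h_{\nu}$. Your own estimates force this conclusion independently of uniformity: summing $\nu([J]) \asymp (\rho^{(2)})^{-n}\prod m$ over all $J \in W^{(1)}_{n+1}$ gives $1 \asymp (\rho^{(2)})^{-n}\,\mathbf{1}^{T}M^{n}\mathbf{1} \asymp (\rho_M/\rho^{(2)})^{n}$, hence $\rho_M = \rho^{(2)}$ whenever the telescoping holds, so within your framework $\rho_M$ cannot distinguish uniform from non-uniform $\pi$, and both directions of the claimed equivalence break down (your closing ``minimality of $\rho_M$'' remark inherits the same error). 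The paper's own Example \ref{eg-Y1Y2-SFT} is a concrete counterexample to your identity: there the Markov condition holds for $\psi:\mathbf{Y}\to\mathbf{Y}^{(1)}$ with domain entropy $\log g$ and $\rho_M = g$, yet the push-forward is the Parry measure of the golden-mean shift, of entropy $\log g$, while your formula predicts $\log g - \log g = 0$. What your computation actually proves is that $\pi$ is uniform if and only if $\int \log m\, d\nu = \log\rho^{(2)} - \log\rho^{(1)}$; converting that Birkhoff-average criterion into the spectral condition \eqref{eq-uniform-cond-on-W} (or deciding how the paper's $M$ must be normalized for \eqref{eq-uniform-cond-on-W} to be meaningful) is precisely the missing content, presumably what the omitted adaptation of Boyle--Tuncel's argument is supposed to supply.
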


Theorem \ref{thm-uniform-iff-W} is obtained with a little modification of the proof of Proposition 6.1 in \cite{BT-TAMS1984}, thus we omit it here. The following corollary comes immediately from Theorem \ref{thm-uniform-iff-W}.

\begin{corollary}\label{cor-dim-inf2one}
Let $N$ be defined as above. Suppose $N$ satisfies the Markov condition and \eqref{eq-uniform-cond-on-W} holds. Then $\overline{\pi} \nu^{(i)} \in \mathcal{M}_{\max}(\mathbf{Y}^{(\overline{i})})$ if $\nu^{(i)} \in \mathcal{M}_{\max}(\mathbf{Y}^{(i)})$. Furthermore, if
$$
\dim \mathbf{Y}^{(i)} = \dfrac{h_{\nu^{(i)}}(\mathbf{Y}^{(i)})}{\log n_i},
$$
then
$$
\dim \mathbf{Y}^{(\overline{i})} = \dfrac{h_{\overline{\pi} \nu^{(i)}}(\mathbf{Y}^{(\overline{i})})}{\log n_{\overline{i}}}.
$$
\end{corollary}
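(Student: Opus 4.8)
The plan is to read Corollary \ref{cor-dim-inf2one} as the concrete, checkable instance of Theorem \ref{main-thm-ITO}: the eigenvalue identity \eqref{eq-uniform-cond-on-W} is precisely what upgrades the abstract hypothesis ``$\pi$ is a uniform factor'' in Theorem \ref{main-thm-ITO}~(i) into something verifiable, after which both conclusions fall out of the machinery already assembled in this section. I keep the standing assumptions of Theorem \ref{thm-inf-to-one-on-W} together with the synchronizing-word and $h(\mathbf{Y}^{(i)}) > h(\mathbf{Y}^{(\overline{i})})$ hypotheses of Theorem \ref{main-thm-ITO}, so that the relevant maps all exist. First I would invoke Theorem \ref{thm-uniform-iff-W}: since $N$ satisfies the Markov condition and $\rho_M = \rho^{(2)}/\rho^{(1)}$, the factor map $\pi: W^{(i)} \to W^{(\overline{i})}$ is uniform. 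By the definition of uniformity, the unique maximal measure $\mu^{(i)}$ of the irreducible SFT $W^{(i)}$ satisfies $\pi\mu^{(i)} \in \mathcal{M}_{\max}(W^{(\overline{i})})$; because $W^{(\overline{i})}$ is irreducible its maximal measure $\mu^{(\overline{i})}$ is unique, whence $\pi\mu^{(i)} = \mu^{(\overline{i})}$.

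Next I would transport this identity across the finite-to-one factors $\phi^{(i)}$. Lemma \ref{lem-ai-sync-word} converts the synchronizing-word hypothesis into almost invertibility of each $\phi^{(i)}$, and Theorem \ref{thm-MS-max2max} then supplies entropy-preserving bijections $\phi^{(i)}: \mathcal{M}_{\max}(W^{(i)}) \to \mathcal{M}_{\max}(\mathbf{Y}^{(i)})$, so that $\nu^{(i)} = \phi^{(i)}\mu^{(i)}$ and $\nu^{(\overline{i})} = \phi^{(\overline{i})}\mu^{(\overline{i})}$ are the unique maximal measures of $\mathbf{Y}^{(i)}$ and $\mathbf{Y}^{(\overline{i})}$. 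The map $\overline{\pi}$ furnished by Theorem \ref{main-thm-ITO}~(ii) is characterized on maximal measures by the commutation $\overline{\pi}\circ\phi^{(i)} = \phi^{(\overline{i})}\circ\pi$; combining this with $\pi\mu^{(i)} = \mu^{(\overline{i})}$ yields
$$
\overline{\pi}\nu^{(i)} = \overline{\pi}\phi^{(i)}\mu^{(i)} = \phi^{(\overline{i})}\pi\mu^{(i)} = \phi^{(\overline{i})}\mu^{(\overline{i})} = \nu^{(\overline{i})} \in \mathcal{M}_{\max}(\mathbf{Y}^{(\overline{i})}),
$$
which is the first assertion of the corollary.

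For the dimension identity I would reuse the computation from the proof of Theorem \ref{main-thm-FSE}. Equipping $\mathbf{Y}^{(\overline{i})}$ with its uniform metric (each symbol contributing the factor $1/n_{\overline{i}}$) and taking $\lambda_1 = \cdots = \lambda_{n_{\overline{i}}} = 1/n_{\overline{i}}$, the cylinder-diameter bounds demanded by Theorem \ref{thm-pesin-dim} hold exactly as in the SFT case, since the metric depends only on the alphabet size and on coordinate agreement. Hence $\dim \mathbf{Y}^{(\overline{i})} = h_{\nu^{(\overline{i})}}(\mathbf{Y}^{(\overline{i})})/\log n_{\overline{i}}$, the maximal measure being a measure of full Hausdorff dimension. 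The hypothesis $\dim\mathbf{Y}^{(i)} = h_{\nu^{(i)}}(\mathbf{Y}^{(i)})/\log n_i$ records this property for $\nu^{(i)}$ on $\mathbf{Y}^{(i)}$; since $\overline{\pi}\nu^{(i)} = \nu^{(\overline{i})}$ is maximal on $\mathbf{Y}^{(\overline{i})}$, substitution gives $\dim\mathbf{Y}^{(\overline{i})} = h_{\overline{\pi}\nu^{(i)}}(\mathbf{Y}^{(\overline{i})})/\log n_{\overline{i}}$, as claimed.

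The step I expect to be the genuine obstacle — worth spelling out rather than asserting — is the measure-level commutation $\overline{\pi}\circ\phi^{(i)} = \phi^{(\overline{i})}\circ\pi$ across the square relating $W^{(i)}, W^{(\overline{i})}, \mathbf{Y}^{(i)}, \mathbf{Y}^{(\overline{i})}$. Because the vertical maps $\phi^{(i)}$ are only finite-to-one rather than conjugacies, one must check that $\overline{\pi}$, when restricted to $\mathcal{M}_{\max}(\mathbf{Y}^{(i)})$, is the honest descent of the $W$-level uniform map $\pi$ and is insensitive to the choice of preimage measure. The uniqueness of the maximal measure on each of the four irreducible systems is exactly what forces the square to commute and eliminates that ambiguity; everything remaining is bookkeeping with the uniform metric and the variational principle, which is why the corollary follows immediately once Theorem \ref{thm-uniform-iff-W} is in hand.
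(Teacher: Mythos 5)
Your proposal is correct and follows essentially the same route the paper intends: the paper derives this corollary directly from Theorem \ref{thm-uniform-iff-W} (uniformity of $\pi$ from the Markov condition and \eqref{eq-uniform-cond-on-W}), combined with the synchronizing-word machinery of Lemma \ref{lem-ai-sync-word} and Theorem \ref{thm-MS-max2max} and the dimension computation via Theorem \ref{thm-pesin-dim} already used for Theorems \ref{main-thm-FSE} and \ref{main-thm-ITO}. Your explicit verification of the measure-level commutation, which the paper leaves implicit by defining $\overline{\pi}$ on maximal measures as the descent of $\pi$ through the bijections of Theorem \ref{thm-MS-max2max}, is a faithful filling-in of the paper's ``comes immediately'' claim rather than a different argument.
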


% -------------------------------------------------------------
\section{Examples}

\begin{example}\label{eg-Y1Y2-SFT}
Suppose the templates of a SMCNN are given by the following:
\begin{align*}
[a^{(1)}, a_r^{(1)}, z^{(1)}] &= [2.9, 1.7, 0.1] \\
[a^{(2)}, a_r^{(2)}, b^{(2)}, b_r^{(2)}, z^{(2)}] &= [-0.3, -1.2, 0.7, 2.3, 0.9]
\end{align*}
Then the basic set of admissible local patterns is
$$
\mathcal{B} = \left\{
\boxed{-+ \atop \displaystyle --}\,, \boxed{-+ \atop \displaystyle +-}\,, \boxed{+- \atop \displaystyle -+}\,, \boxed{+- \atop \displaystyle ++}\,, \boxed{++ \atop \displaystyle -+}\,, \boxed{++ \atop \displaystyle ++}\right\}.
$$
The transition matrix $T$ of the solution space $\mathbf{Y}$ is
$$
T = \begin{pmatrix}
        0 & 0 & 1 & 0 \\
        0 & 0 & 1 & 0 \\
        0 & 1 & 0 & 1 \\
        0 & 1 & 0 & 1 \\
      \end{pmatrix},
$$
and the symbolic transition matrices of the hidden and output spaces are
$$
S^{(1)} = \begin{pmatrix}
        \varnothing & \alpha_1 \\
        \alpha_2 & \alpha_3 \\
      \end{pmatrix}
\quad and \quad
S^{(2)} = \begin{pmatrix}
        \varnothing & \alpha_1 & \varnothing \\
        \alpha_2 & \varnothing & \alpha_3 \\
        \alpha_2 & \varnothing & \alpha_3 \\
      \end{pmatrix}
$$
respectively. Figure \ref{fig-551429} shows that $\mathbf{Y}^{(1)}$ and $\mathbf{Y}^{(2)}$ are two different spaces. The topological entropy of $\mathbf{Y}^{(i)}$ is related to the spectral radius of the incidence of $S^{(i)}$. An easy computation infers $h(\mathbf{Y}^{(1)}) = h(\mathbf{Y}^{(2)}) = \log g$, where $g = (1+\sqrt{5})/2$ is the golden mean.

Let
$$
E = \begin{pmatrix}
        1 & 0 \\
        0 & 1 \\
        0 & 1 \\
      \end{pmatrix}.
$$
Then $S^{(2)} E = E S^{(1)}$. Proposition \ref{prop-factor-like-Y-W} indicates that there exist factor maps $\pi: W^{(2)} \to W^{(1)}$ and $\overline{\pi}: \mathbf{Y}^{(2)} \to \mathbf{Y}^{(1)}$. More precisely, let
\begin{align*}
&\mathcal{A}(W^{(1)}) = \{x_1, x_2\}, \quad \mathcal{A}(W^{(2)}) = \{x'_1, x'_2, x'_3\}; \\
&\mathcal{A}(\mathbf{Y}^{(1)}) = \{y_1, y_2\}, \quad \mathcal{A}(\mathbf{Y}^{(2)}) = \{y'_1, y'_2, y'_3\}.
\end{align*}
Then
$$
\pi(x'_1) = x_1, \quad \pi(x'_2) = \pi(x'_3) = x_2, \quad \overline{\pi}(y'_1) = y_1, \quad \overline{\pi}(y'_2) = \overline{\pi}(y'_3) = y_2.
$$
See Figure \ref{fig-eg-Y1Y2-SFT}.

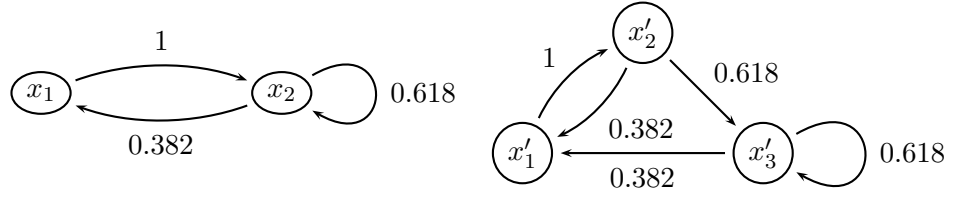
\begin{figure}
\begin{center}
\psset{unit=0.8cm}
\begin{pspicture}(13,3)
\psset{nodesep=0.1cm}
\rput(1,2){\ovalnode{A}{$x_1$}}  \rput(5,2){\ovalnode{B}{$x_2$}}
\rput(9,1){\ovalnode{C}{$x_1'$}}  \rput(13,1){\ovalnode{D}{$x_3'$}}  \rput(11,3){\ovalnode{E}{$x_2'$}}

\ncarc[arcangle=20]{->}{A}{B}\Aput{$1$}  \ncarc[arcangle=20]{->}{B}{A}\Aput{$0.382$}  \nccurve[angleA=30,angleB=-30,ncurv=5.5]{->}{B}{B}\Aput{$0.618$}
\ncarc[arcangle=20]{->}{C}{E}\Aput{$1$}  \ncarc[arcangle=20]{->}{E}{C}\Aput{$0.382$}
\ncline{->}{E}{D}\Aput{$0.618$}
\nccurve[angleA=30,angleB=-30,ncurv=5.5]{->}{D}{D}\Aput{$0.618$}
\ncline{->}{D}{C}\Aput{$0.382$}
\end{pspicture}
\end{center}
\caption{The graph representation of the hidden and output spaces of Example \ref{eg-Y1Y2-SFT}. The number on the edge is the transition probability. The left one represents $\mathbf{Y}^{(1)}$ and the right one represents $\mathbf{Y}^{(2)}$.} \label{fig-eg-Y1Y2-SFT}
\end{figure}

Suppose $\mathcal{A}(\mathbf{Y}) = \{z_1, z_2, z_3, z_4\}$, the factor map $\psi: \mathbf{Y} \to \mathbf{Y}^{(1)}$ is given by
$$
\psi(z_1) = \psi(z_3) = y_1, \quad \psi(z_2) = \psi(z_4) = y_2
$$
Set $N = (N_{ij})_{i \leq i, j \leq 2}$ and $L_1, L_2$ by
$$
N_{11} = N_{21} = \begin{pmatrix}
        0 & 1 \\
        0 & 0 \\
      \end{pmatrix},
\quad
N_{12} = N_{22} = \begin{pmatrix}
        0 & 0 \\
        1 & 1 \\
      \end{pmatrix}
$$
and $L_1 = (0\ 1), L_2 = (g\ g)$, respectively. A straightforward calculation demonstrates that
$$
L_1 N_{11} = 0 \cdot L_1, L_1 N_{12} = g^{-1} \cdot L_2, L_2 N_{11} = g \cdot L_1, L_2 N_{22} = 1 \cdot L_2.
$$
That is, $N$ satisfies the Markov condition of order $1$. Theorem \ref{thm-sofic-measure-THM4} indicates that $\mathbf{Y}^{(1)}$ is a SFT with the unique maximal measure of entropy $\nu^{(1)}$, and $\nu^{(1),+} = (p_{\mathbf{Y}^{(1)}}, P_{\mathbf{Y}^{(1)}})$, where $p_{\mathbf{Y}^{(1)}} = (\dfrac{2-g}{3-g}, \dfrac{1}{3-g})$ and
$$
P_{\mathbf{Y}^{(1)}} = \begin{pmatrix}
        0 & 1 \\
        2-g & g-1 \\
      \end{pmatrix}
    = stoch(M),
\quad
M = \begin{pmatrix}
        0 & 1/g \\
        g & 1 \\
      \end{pmatrix}.
$$
Applying Theorem \ref{main-thm-FSE}, we have
\begin{align*}
\dim W^{(1)} &= \dim \mathbf{Y}^{(1)}= 2 \frac{h_{\nu^{(1),+}}(\mathbf{Y}^{(1)})}{\log 2} \\
 &= \frac{2}{(g-3)\log 2} ((2-g) \log (2-g) + (g-1) \log (g-1)) = 2 \frac{\log g}{\log 2}.
\end{align*}

On the other hand,
$$
S^{(2)}S^{(2)} = \begin{pmatrix}
        \alpha_1 \alpha_2 & \varnothing & \alpha_1 \alpha_3 \\
        \alpha_3 \alpha_2 & \alpha_2 \alpha_1 & \alpha_3 \alpha_3 \\
        \alpha_3 \alpha_2 & \alpha_2 \alpha_1 & \alpha_3 \alpha_3 \\
      \end{pmatrix}
$$
infers that every word of length $3$ in $\mathbf{Y}^{(2)}$ is a synchronizing word. That is, $\mathbf{Y}^{(2)}$ is topological conjugate to $W^{(2)}$. Since the unique maximal measure of $W^{(2)}$ is $\mu^{(2)}$ with $\mu^{(2),+} = (p_{W^{(2)}}, P_{W^{(2)}})$, where $p_{W^{(2)}} = (\dfrac{2-g}{3-g}, \dfrac{2-g}{3-g}, \dfrac{g-1}{3-g})$,
$$
P_{W^{(2)}} = \begin{pmatrix}
        0 & 1 & 0 \\
        2-g & 0 & g-1 \\
        2-g & 0 & g-1 \\
      \end{pmatrix}.
$$
Theorem \ref{main-thm-FSE} suggests that
\begin{align*}
\dim W^{(2)} &= 2 \frac{h_{\mu^{(2),+}}(W^{(2)})}{\log 3} = 2 \frac{\log g}{\log 3}, \\
\intertext{and}
\dim \mathbf{Y}^{(2)} &= 2 \frac{h_{\phi^{(2)}\mu^{(2),+}}(\mathbf{Y}^{(2)})}{\log 2} = 2 \frac{h_{\mu^{(2),+}}(W^{(2)})}{\log 2} = 2 \frac{\log g}{\log 2}.
\end{align*}
$\pi\mu^{(2)} = \mu^{(1)}$ can be verified without difficulty, thus we omit the details.
\end{example}

\begin{figure}
\begin{center}
\includegraphics[scale=0.7]{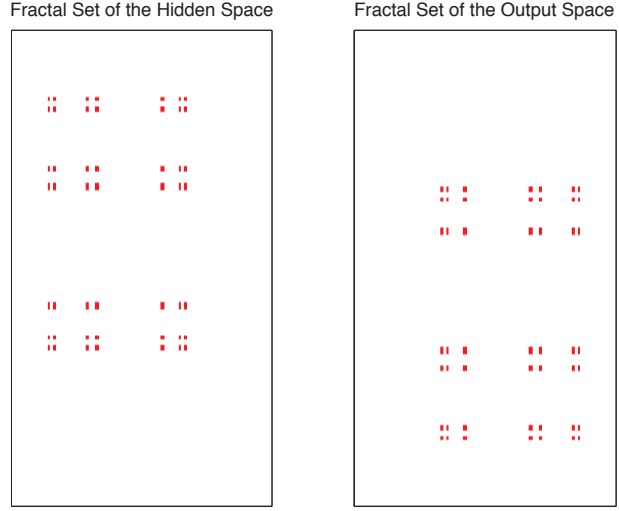}
\caption{The fractal sets of  the hidden and output spaces of Example \ref{eg-FSE-AI}. The templates are given by $[a^{(1)}, a_r^{(1)}, z^{(1)}] = [2.9, 1.7, 0.1]$ and $[a^{(2)}, a_r^{(2)}, b^{(2)}, b_r^{(2)}, z^{(2)}] = [-0.1, -1.1, 2.1, -1.4, 0.9]$. The output space $\mathbf{Y}^{(2)}$ is a strict sofic shift rather than a SFT. Meanwhile, the hidden space $\mathbf{Y}^{(1)}$ is a SFT.}
\label{fig-642428}
\end{center}
\end{figure}

\begin{example}\label{eg-FSE-AI}
Suppose the template of the first layer is the same as in Example \ref{eg-Y1Y2-SFT}, and
$$
[a^{(2)}, a_r^{(2)}, b^{(2)}, b_r^{(2)}, z^{(2)}] = [-0.1, -1.1, 2.1, -1.4, 0.9].
$$
The basic set of admissible local patterns of the solution space $\mathbf{Y}$ is
$$
\mathcal{B} = \left\{
\boxed{-+ \atop \displaystyle -+}\,, \boxed{-- \atop \displaystyle -+}\,, \boxed{+- \atop \displaystyle +-}\,, \boxed{++ \atop \displaystyle +-}\,, \boxed{+- \atop \displaystyle ++}\,, \boxed{+- \atop \displaystyle --}\right\}.
$$
The transition matrix $T$ of the solution space $\mathbf{Y}$ is
$$
T = \begin{pmatrix}
        0 & 1 & 0 & 1 \\
        0 & 0 & 0 & 0 \\
        1 & 0 & 0 & 0 \\
        1 & 1 & 1 & 0 \\
      \end{pmatrix}.
$$
After careful examination, the hidden and output spaces are both mixing with symbolic transition matrices
$$
S^{(1)} = \begin{pmatrix}
            \varnothing & \varnothing & \alpha_1 \\
            \alpha_0 & \varnothing & \alpha_1 \\
            \varnothing & \alpha_2 & \varnothing \\
          \end{pmatrix}, \quad
S^{(2)} = \begin{pmatrix}
            \varnothing & \varnothing & \alpha_1 & \varnothing \\
            \alpha_2 & \varnothing & \varnothing & \varnothing \\
            \varnothing & \alpha_3 & \varnothing & \alpha_2 \\
            \varnothing & \varnothing & \alpha_1 & \varnothing \\
          \end{pmatrix}.
$$
See Figure \ref{fig-642428}. $\mathbf{Y}^{(1)}$ and $\mathbf{Y}^{(2)}$ are FSE since $h(\mathbf{Y}^{(1)}) = h(\mathbf{Y}^{(2)}) = \log \rho$, where $\rho \approx 1.3247$ satisfies $\rho^3 - \rho - 1 = 0$. Let
$$
E = \begin{pmatrix}
      0 & 0 & 1 \\
      1 & 0 & 0 \\
      0 & 1 & 0 \\
      0 & 0 & 1 \\
    \end{pmatrix}.
$$
Notably, $T^{(2)} E = E T^{(1)}$ and there exists no factor-like matrix $F$ such that $S^{(2)} F = F S^{(1)}$ or $S^{(1)} F = F S^{(2)}$.
It follows from $S^{(1)}$ that every word of length $2$ in $\mathbf{Y}^{(1)}$ is a synchronizing word. Hence $\mathbf{Y}^{(1)} \cong W^{(1)}$. The unique maximal measure of entropy for $W^{(1),+}$ is $\mu^{(1),+} = (p_{W^{(1)}}, P_{W^{(1)}})$, where $p_{W^{(1)}} = (0.1770, 0.4115, 0.4115)$ and
$$
P_{W^{(1)}} = \begin{pmatrix}
        0 & 0 & 1 \\
        0.4302 & 0 & 0.5698 \\
        0 & 1 & 0 \\
      \end{pmatrix}.
$$
Hence
\begin{align*}
\dim W^{(1)} &= 2 \frac{h_{\mu^{(1),+}}(W^{(1)})}{\log 3} \approx 0.5119, \\
\intertext{and}
\dim \mathbf{Y}^{(1)} &= 2 \frac{h_{\phi^{(1)}\mu^{(1),+}}(\mathbf{Y}^{(1)})}{\log 2} = 2 \frac{h_{\mu^{(1),+}}(W^{(1)})}{\log 2} \approx 0.8114.
\end{align*}

Unlike Example \ref{eg-Y1Y2-SFT}, it can be checked (with or without computer assistance) that $\mathbf{Y}^{(2)}$, rather than a SFT, is a strict sofic shift since there exists no $k \in \mathbb{N}$ such that every word of length $k$ is a synchronizing word in $\mathbf{Y}^{(2)}$. Nevertheless, there is a synchronizing word of length $2$ (that is, $\alpha_3 = ++$). Theorem \ref{main-thm-FSE} (i) indicates that there is a one-to-one correspondence between $\mathcal{M}_{\max}(W^{(2)})$ and $\mathcal{M}_{\max}(\mathbf{Y}^{(2)})$. Since the unique maximal measure of $W^{(2),+}$ is $\mu^{(2),+} = (p_{W^{(2)}}, P_{W^{(2)}})$, where $p_{W^{(2)}} = (0.1770, 0.1770, 0.4115, 0.2345)$ and
$$
P_{W^{(2)}} = \begin{pmatrix}
        0 & 0 & 1 & 0 \\
        1 & 0 & 0 & 0 \\
        0 & 0.4302 & 0 & 0.5698 \\
        0 & 0 & 1 & 0 \\
      \end{pmatrix},
$$
we have
\begin{align*}
\dim W^{(2)} &= 2 \frac{h_{\mu^{(1),+}}(W^{(1)})}{\log 4} \approx 0.4057, \\
\intertext{and}
\dim \mathbf{Y}^{(2)} &= 2 \frac{h_{\phi^{(2)}\mu^{(2),+}}(\mathbf{Y}^{(2)})}{\log 2} = 2 \frac{h_{\mu^{(2),+}}(W^{(2)})}{\log 2} \approx 0.8114.
\end{align*}
\end{example}

\begin{figure}
\begin{center}
\includegraphics[scale=0.7]{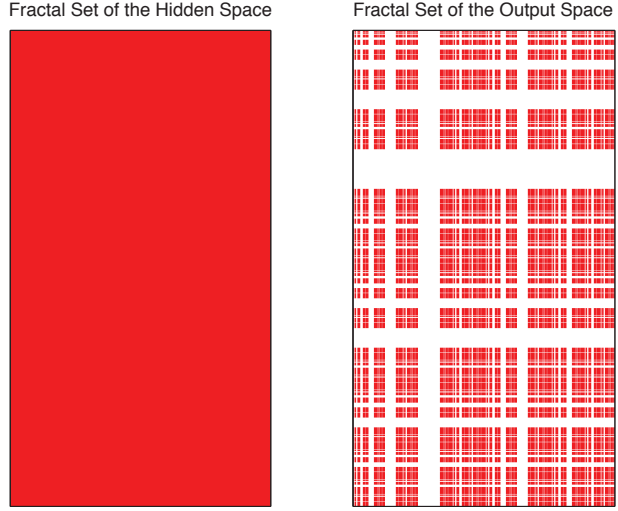}
\caption{The fractal sets of  the hidden and output spaces of Example \ref{eg-ITO-Y1-SFT}. The templates are given by $[a^{(1)}, a_r^{(1)}, z^{(1)}] = [2.9, 1.7, 0.1]$ and $[a^{(2)}, a_r^{(2)}, b^{(2)}, b_r^{(2)}, z^{(2)}] = [1.3, -1.2, 0.7, 2.3, 0.8]$. It is seen that the hidden space $\mathbf{Y}^{(1)}$ is the unit square $[0, 1] \times [0,1 ]$, and so is $W^{(1)}$. Moreover, there are infinite-to-one factor maps $\pi: W^{(1)} \to W^{(2)}$ and $\pi: \mathbf{Y}^{(1)} \to \mathbf{Y}^{(2)}$.}
\label{fig-551539}
\end{center}
\end{figure}

\begin{example}\label{eg-ITO-Y1-SFT}
Suppose the template of the first layer is the same as in Example \ref{eg-Y1Y2-SFT}, and
\begin{align*}
[a^{(2)}, a_r^{(2)}, b^{(2)}, b_r^{(2)}, z^{(2)}] = [1.3, -1.2, 0.7, 2.3, 0.8].
\end{align*}
Then the basic set of admissible local patterns is
$$
\mathcal{B} = \left\{
\boxed{-- \atop \displaystyle --}\,, \boxed{-+ \atop \displaystyle --}\,, \boxed{-+ \atop \displaystyle +-}\,, \boxed{+- \atop \displaystyle -+}\,, \boxed{+- \atop \displaystyle ++}\,, \boxed{++ \atop \displaystyle --}\,, \boxed{++ \atop \displaystyle -+}\,, \boxed{++ \atop \displaystyle ++}\right\}.
$$
The transition matrix $T$ of the solution space $\mathbf{Y}$,
$$
T = \begin{pmatrix}
        1 & 0 & 1 & 0 \\
        0 & 0 & 1 & 0 \\
        0 & 1 & 0 & 1 \\
        1 & 1 & 0 & 1 \\
      \end{pmatrix}
$$
suggests that $\mathbf{Y}$ is mixing. It is not difficult to see that the symbolic transition matrices of the hidden and output spaces are
$$
S^{(1)} = \begin{pmatrix}
        \alpha_0 & \alpha_1 \\
        \alpha_2 & \alpha_3 \\
      \end{pmatrix}
\quad and \quad
S^{(2)} = \begin{pmatrix}
        \alpha_0 & \varnothing & \alpha_1 & \varnothing & \varnothing \\
        \varnothing & \varnothing & \alpha_1 & \varnothing & \varnothing \\
        \varnothing & \alpha_2 & \varnothing & \alpha_3 & \varnothing \\
        \varnothing & \varnothing & \varnothing & \alpha_3 & \alpha_2 \\
        \alpha_0 & \varnothing & \alpha_1 & \varnothing & \varnothing \\
      \end{pmatrix}
$$
respectively. See Figure \ref{fig-551539} for the fractal sets of $\mathbf{Y}^{(1)}$ and $\mathbf{Y}^{(2)}$.

Obviously $\mathbf{Y}^{(1)}$ is a full $2$-shift. It is remarkable that $\phi^{(1)} \mu^{(1)}$ is not a Markov measure. The unique maximal measure for $W^{(1),+}$ (also for $\mathbf{Y}^{(1),+}$) is the uniform Bernoulli measure $\mu^{(1),+} = (1/2, 1/2)$. Therefore,
$$
\dim W^{(1)} = \dim \mathbf{Y}^{(1)} = 2 \frac{h_{\mu^{(1),+}}(W^{(1)})}{\log 2} = 2.
$$
Since $h(W^{(2)}) = \log \rho$, where $\rho \approx 1.8668$ satisfies $\rho^4 - 2 \rho^3 + \rho - 1 = 0$, the factor map $\pi: W^{(1)} \to W^{(2)}$ must be infinite-to-one if it exists. The fact $W^{(2)}$ has two fixed points, which can be seen from $T^{(2)}$, asserts that there exists an infinite-to-one factor map $\pi: W^{(1)} \to W^{(2)}$ by Theorem \ref{thm-inf-to-1-SFT}. However, it is difficult to find the explicit form of $\pi$.

Since the unique maximal measure of $W^{(2),+}$ is $\mu^{(2),+} = (p_{W^{(2)}}, P_{W^{(2)}})$ with $p_{W^{(2)}} = (0.1888, 0.0658, 0.2294, 0.3524, 0.1636)$ and
$$
P_{W^{(2)}} = \begin{pmatrix}
                0.5357 & 0 & 0.4643 & 0 & 0 \\
                0 & 0 & 1 & 0 & 0 \\
                0 & 0.2870 & 0 & 0.7130 & 0 \\
                0 & 0 & 0 & 0.5357 & 0.4643 \\
                0.5357 & 0 & 0.4643 & 0 & 0 \\
              \end{pmatrix},
$$
the Hausdorff dimension of $W^{(2)}$ is
$$
\dim W^{(2)} = 2 \frac{h_{\mu^{(2),+}}(W^{(2)})}{\log 5} \approx 0.7758.
$$
Since $W^{(2)}$ is mixing, we have
$$
\dim \mathbf{Y}^{(2)} = 2 \frac{h_{\nu^{(2),+}}(\mathbf{Y}^{(2)})}{\log 2} = 2 \frac{h_{\phi^{(2)}\mu^{(2),+}}(\mathbf{Y}^{(2)})}{\log 2} = 2 \frac{h_{\mu^{(2),+}}(W^{(2)})}{\log 2} \approx 1.8012.
$$
As a conclusion, in the present example, an infinite-to-one factor map is associated with a different Hausdorff dimension.
\end{example}

\begin{figure}
\begin{center}
\includegraphics{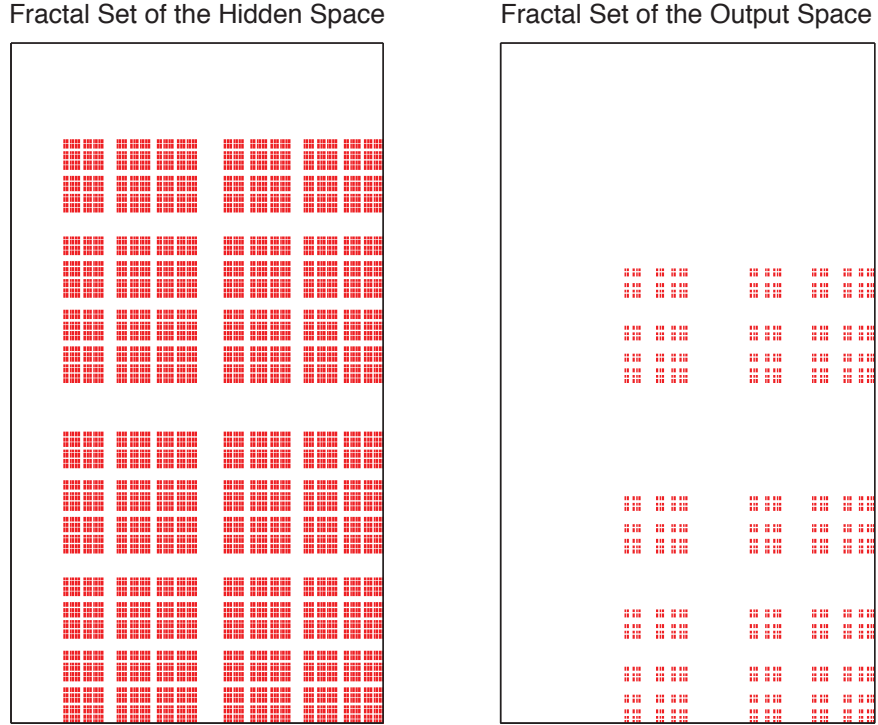}
\caption{The fractal sets of  the hidden and output spaces of Example \ref{eg-ITO-sofic}. The templates are given by $[a^{(1)}, a_r^{(1)}, z^{(1)}] = [2.9, 1.7, 0.1]$ and $[a^{(2)}, a_r^{(2)}, b^{(2)}, b_r^{(2)}, z^{(2)}] = [0.7, -1.1, 2.1, -1.4, 1.7]$. It is demonstrated that there is an infinite-to-one factor map $\pi: W^{(1)} \to W^{(2)}$, and $\mathbf{Y}^{(1)}, \mathbf{Y}^{(2)}$ are strictly sofic.}
\label{fig-642529}
\end{center}
\end{figure}

\begin{example}\label{eg-ITO-sofic}
Suppose the template of the first layer is the same as in Example \ref{eg-Y1Y2-SFT}, and
$$
[a^{(2)}, a_r^{(2)}, b^{(2)}, b_r^{(2)}, z^{(2)}] = [0.7, -1.1, 2.1, -1.4, 1.7].
$$
The basic set of admissible local patterns of the solution space $\mathbf{Y}$ is
$$
\mathcal{B} = \left\{
\boxed{-+ \atop \displaystyle -+}\,, \boxed{-- \atop \displaystyle -+}\,, \boxed{+- \atop \displaystyle +-}\,, \boxed{++ \atop \displaystyle +-}\,, \boxed{+- \atop \displaystyle ++}\,, \boxed{+- \atop \displaystyle --}\,, \boxed{++ \atop \displaystyle ++}\right\}.
$$
The transition matrix $T$ of the solution space $\mathbf{Y}$ is
$$
T = \begin{pmatrix}
        0 & 1 & 0 & 1 \\
        0 & 0 & 0 & 0 \\
        1 & 0 & 0 & 0 \\
        1 & 1 & 1 & 1 \\
      \end{pmatrix}.
$$
A straightforward examination shows that the hidden and output spaces are both mixing with symbolic transition matrices
$$
S^{(1)} = \begin{pmatrix}
            \varnothing & \varnothing & \alpha_1 \\
            \alpha_0 & \varnothing & \alpha_1 \\
            \varnothing & \alpha_2 & \alpha_3 \\
          \end{pmatrix}, \quad
S^{(2)} = \begin{pmatrix}
            \varnothing & \alpha_2 & \alpha_3 \\
            \alpha_1 & \varnothing & \varnothing \\
            \varnothing & \alpha_0 & \alpha_3 \\
          \end{pmatrix}.
$$
$h(\mathbf{Y}^{(1)}) = \log \rho$ and $h(\mathbf{Y}^{(2)}) = \log g$, where $\rho \approx 1.8393$ satisfies $\rho^3 - \rho^2 - \rho - 1 = 0$. See Figure \ref{fig-642529}.

Since $W^{(2)}$ has a fixed point, Theorem \ref{thm-inf-to-1-SFT} infers there is an infinite-to-one factor map $\pi: W{(1)} \to W^{(2)}$. The unique maximal measure of $W^{(1),+}$ is $\mu^{(1),+} = (p_{W^{(1)}}, P_{W^{(1)}})$ with $p_{W^{(1)}} = (0.0994, 0.2822, 0.6184)$ and
$$
P_{W^{(1)}} = \begin{pmatrix}
                0 & 0 & 1 \\
                0.3522 & 0 & 0.6478 \\
                0 & 0.4563 & 0.5437 \\
              \end{pmatrix}.
$$
This suggests
$$
\dim W^{(1)} = 2 \frac{h_{\mu^{(1),+}}(W^{(1)})}{\log 3} \approx 1.1094.
$$
The symbolic transition matrix $S^{(1)}$ asserts that every word of length $2$ in $\mathbf{Y}^{(1)}$ is a synchronizing word, hence $\mathbf{Y}^{(1)}$ is topologically conjugated to $W^{(1)}$ and
$$
\dim \mathbf{Y}^{(1)} = 2 \frac{h_{\nu^{(1),+}}(\mathbf{Y}^{(1)})}{\log 2} = 2 \frac{h_{\phi^{(1)} \mu^{(1),+}}(\mathbf{Y}^{(1)})}{\log 2} \approx 1.7582.
$$

On the other hand, it is verified that the unique maximal measure of $W^{(2),+}$ is $\mu^{(2),+} = (p_{W^{(2)}}, P_{W^{(2)}})$ with $p_{W^{(2)}} = (\dfrac{2-g}{3-g}, \dfrac{2-g}{3-g}, \dfrac{g-1}{3-g})$ and
$$
P_{W^{(2)}} = \begin{pmatrix}
                0 & 2-g & g-1 \\
                1 & 0 & 0 \\
                0 & 2-g & g-1 \\
              \end{pmatrix}.
$$
Since every word of length $2$ in $\mathbf{Y}^{(2)}$ is a synchronizing word, we have
$$
\dim W^{(2)} = 2 \frac{h_{\mu^{(2),+}}(W^{(2)})}{\log 3} = 2 \frac{\log g}{\log 3} \approx 0.8760,
$$
and
$$
\dim \mathbf{Y}^{(2)} = 2 \frac{h_{\nu^{(2),+}}(\mathbf{Y}^{(2)})}{\log 2} = 2 \frac{h_{\phi^{(2)} \mu^{(2),+}}(\mathbf{Y}^{(2)})}{\log 2} \approx 1.3884.
$$
\end{example}

% -------------------------------------------------------------
\section{Relation Between the Hausdorff Dimension of Two Hidden Spaces}

Theorems \ref{main-thm-FSE} and \ref{main-thm-ITO} can be extended to two spaces that are induced from a general $n$-layer cellular neural network \eqref{eq-general-system} via analogous discussion as in previous sections. Hence we illustrate the results without providing a detailed argument.  The solution space $\mathbf{Y}$ of \eqref{eq-general-system} is determined by
\begin{align*}
\mathcal{B} &\equiv \mathcal{B}(A^{(1)}, \ldots, A^{(n)}, B^{(1)}, \ldots, B^{(n)}, z^{(1)}, \ldots, z^{(n)}) \\
&= \left\{
\boxed{y_{-d}^{(n)} \cdots y_{-1}^{(n)} y_0^{(n)} y_1^{(n)} \cdots y_d^{(n)} \atop {\displaystyle \vdots \atop {\displaystyle y_{-d}^{(2)} \cdots y_{-1}^{(2)} y_0^{(2)} y_1^{(2)} \cdots y_d^{(2)} \atop {\displaystyle y_{-d}^{(1)} \cdots y_{-1}^{(1)} y_0^{(1)} y_1^{(1)} \cdots y_d^{(1)}}}}}\right\} \subseteq \{-1, 1\}^{\mathbb{Z}_{(2d+1) \times n}}.
\end{align*}
For $1 \leq \ell \leq n$, set
$$
\mathcal{L}^{(\ell)}(y_{-d}^{(n)} \cdots y_d^{(n)} \diamond \cdots \diamond y_{-d}^{(1)} \cdots y_d^{(1)}) = y_{-d}^{(\ell)} \cdots y_d^{(\ell)}.
$$
The hidden space $\mathbf{Y}^{(\ell)}$ is then defined by $\mathcal{L}^{(\ell)}$ as before. (For simplicity, we also call $\mathbf{Y}^{(n)}$ a hidden space instead of the output space.) Similarly, $\mathbf{Y}^{(\ell)}$ is a sofic shift with respect to a right-resolving finite-to-one factor map $\phi^{(\ell)}: W^{(\ell)} \to \mathbf{Y}^{(\ell)}$ and a SFT $W^{(\ell)}$. Furthermore, $W^{(\ell)}$ can be described by the transition matrix $T^{(\ell)}$ while $\mathbf{Y}^{(\ell)}$ can be completely described by the symbolic transition matrix $S^{(\ell)}$.

For $1 \leq i, j \leq n$, without the loss of generality, we assume that $h(\mathbf{Y}^{(i)}) \geq h(\mathbf{Y}^{(j)})$ and $\mathcal{A}(\mathbf{Y}^{(i)}) \geq \mathcal{A}(\mathbf{Y}^{(j)})$.

\begin{proposition}
Suppose $h(\mathbf{Y}^{(i)}) = h(\mathbf{Y}^{(j)})$. If there exists a factor-like matrix $E$ such that $S^{(i)} E = E S^{(j)}$, then there are finite-to-one factor maps $\pi_{ij}: W^{(i)} \to W^{(j)}$ and $\overline{\pi}_{ij}: \mathbf{Y}^{(i)} \to \mathbf{Y}^{(j)}$. For the case where $\mathbf{Y}^{(i)}$ and $\mathbf{Y}^{(j)}$ attain distinct topological entropies, there is an infinite-to-one factor map $\pi_{ij}: W^{(i)} \to W^{(j)}$ if $|\mathcal{A}(W^{(i)})| > |\mathcal{A}(\mathbf{Y})|$ and there exists a factor-like matrix $F$ such that $T^{(i)} F = F T$.
\end{proposition}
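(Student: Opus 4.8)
The plan is to reduce both assertions to the two-layer results of Section 3, exploiting the fact that the symbolic-matrix machinery transfers to $n$ layers without modification. First I would record that the construction of Subsection 3.1 generalizes verbatim: each label map $\mathcal{L}^{(\ell)}$ produces a right-resolving symbolic transition matrix $S^{(\ell)}$, an incidence matrix $T^{(\ell)}$, a cover SFT $W^{(\ell)}$, and a right-resolving finite-to-one factor $\phi^{(\ell)}: W^{(\ell)} \to \mathbf{Y}^{(\ell)}$, exactly as when $n=2$. The $n$-layer analogue of Proposition \ref{prop-factor-like-Y-W} then follows by the same intertwining argument (the modification of Proposition 3.15 of \cite{BCL-JDE2012}): a factor-like $E$ with $S^{(i)} E = E S^{(j)}$ yields maps $\pi_{ij}: W^{(i)} \to W^{(j)}$ and $\overline{\pi}_{ij}: \mathbf{Y}^{(i)} \to \mathbf{Y}^{(j)}$ that preserve topological entropy.

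For the first assertion I would upgrade ``entropy-preserving'' to ``finite-to-one'' using the hypothesis $h(\mathbf{Y}^{(i)}) = h(\mathbf{Y}^{(j)})$. Passing to the irreducible (mixing) components of the covers and applying Theorem \ref{thm-diamond-inf2one}, an entropy-preserving factor cannot lie in the infinite-to-one branch of the dichotomy, since that branch forces a strict entropy drop; hence $\pi_{ij}$ is finite-to-one. The map $\overline{\pi}_{ij}$ is then finite-to-one as well, either by the same entropy argument applied to its minimal SFT cover or because it sits in a commuting square with the finite-to-one covers $\phi^{(i)}$ and $\phi^{(j)}$.

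For the second assertion, the standing convention $h(\mathbf{Y}^{(i)}) \geq h(\mathbf{Y}^{(j)})$ together with distinctness gives $h(W^{(i)}) = h(\mathbf{Y}^{(i)}) > h(\mathbf{Y}^{(j)}) = h(W^{(j)})$. The factor-like $F$ with $T^{(i)} F = F T$ induces an entropy-preserving map $\Phi^{(i)}: W^{(i)} \to \mathbf{Y}$, forcing $h(W^{(i)}) = h(\mathbf{Y})$; this is precisely case \textbf{a)} of the $n$-layer analogue of Theorem \ref{thm-inf-to-one-on-W}. I would then construct an infinite-to-one factor $\Phi^{(j)}: \mathbf{Y} \to W^{(j)}$ by verifying the factor periodic point condition $P(\mathbf{Y}) \searrow P(W^{(j)})$ and invoking Theorem \ref{thm-inf-to-1-SFT} (legitimate since $h(\mathbf{Y}) > h(W^{(j)})$). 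The composite $\pi_{ij} = \Phi^{(j)} \circ \Phi^{(i)}$ is infinite-to-one by Theorem \ref{thm-diamond-inf2one}, and the hypothesis $|\mathcal{A}(W^{(i)})| > |\mathcal{A}(\mathbf{Y})|$ supplies the surjectivity of $\Phi^{(i)}$, hence of $\pi_{ij}$, so that $\pi_{ij}$ is a genuine factor map rather than merely an embedding.

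The main obstacle is the verification of $P(\mathbf{Y}) \searrow P(W^{(j)})$ in the $n$-layer setting, that is, that the cycle-lifting argument from the proof of Theorem \ref{thm-inf-to-one-on-W} survives when $G^{(j)}$ is obtained from the $n$-symbol labeled graph $(G, \mathcal{L}^{(j)})$ by subset construction. Since each vertex of $G^{(j)}$ is a grouping of vertices of $G$ irrespective of the number of layers, every cyclic path of length $p$ in $G$ still descends to a cyclic path in $G^{(j)}$ of length dividing $p$, and the bookkeeping with grouping vertices (handling the coordinates where an initial or terminal state is absent from $G^{(j)}$) is identical to the two-layer proof. The only genuinely new point is the surjectivity of $\pi_{ij}$, which is exactly what the strict alphabet inequality is arranged to supply.
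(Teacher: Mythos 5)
Your proposal is correct and takes essentially the same approach as the paper, which states this proposition without a detailed proof and defers to the ``analogous discussion'' of Section 3; your argument is exactly that reduction — the $n$-layer version of Proposition \ref{prop-factor-like-Y-W} combined with the dichotomy of Theorem \ref{thm-diamond-inf2one} for the equal-entropy case, and the cycle-lifting/factor-periodic-point argument from the proof of Theorem \ref{thm-inf-to-one-on-W} together with its Corollary (via Theorem \ref{thm-inf-to-1-SFT}) for the distinct-entropy case. Your one soft step — claiming the factor-like matrix $F$ \emph{forces} $h(W^{(i)}) = h(\mathbf{Y})$ (the paper instead assumes this equality as hypothesis \textbf{a)} of Theorem \ref{thm-inf-to-one-on-W}) — is harmless here, since invoking Theorem \ref{thm-inf-to-1-SFT} only requires $h(\mathbf{Y}) \geq h(W^{(i)}) > h(W^{(j)})$, which always holds.
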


The relation of the Hausdorff dimension of $\mathbf{Y}^{(i)}$ and $\mathbf{Y}^{(j)}$, if it exists, is organized as follows.

\begin{theorem}\label{main-thm-general}
Suppose $W^{(i)}$ and $W^{(j)}$ are irreducible SFTs, and there exists a factor map $\pi_{ij}: W^{(i)} \to W^{(j)}$.
\begin{enumerate}[\bf {Case} I.]
\item $\mathbf{Y}^{(i)}, \mathbf{Y}^{(j)}$ share the same topological entropy.
\begin{enumerate}[\bf a)]
  \item There is a one-to-one correspondence between $\mathcal{M}_{\max}(W^{(\ell)})$ and $\mathcal{M}_{\max}(\mathbf{Y}^{(\ell)})$, where $\ell = i, j$.

  \item Let $m_{\ell} = |\mathcal{A}(W^{(\ell)})|, n_{\ell} = |\mathcal{A}(\mathbf{Y}^{(\ell)})|$, and $\mu^{(\ell)}$ be a maximal measure of $W^{(\ell)}$. If $\phi^{(i)}$ has a synchronizing word, then
  $$
  \dim W^{(\ell)} = \frac{h_{\mu^{(\ell)}}(W^{(\ell)})}{\log m_{\ell}} \quad \text{and} \quad \dim \mathbf{Y}^{(\ell)} = \frac{h_{\mu^{(\ell)}}(W^{(\ell)})}{\log n_{\ell}}.
  $$

  \item Suppose $\nu^{(\ell)} = \phi^{(\ell)} \mu^{(\ell)}$. If
  $$
  \dim \mathbf{Y}^{(i)} = \frac{h_{\nu^{(i)}}(\mathbf{Y}^{(i)})}{\log n_i},
  $$
  then
  $$
  \dim \mathbf{Y}^{(j)} = \frac{h_{\overline{\pi}\nu^{(i)}}(\mathbf{Y}^{(j)})}{\log n_j} = \frac{h_{\nu^{(j)}}(\mathbf{Y}^{(j)})}{\log n_j}
  $$
  for some $\overline{\pi}$.
\end{enumerate}

\item $\mathbf{Y}^{(i)}, \mathbf{Y}^{(j)}$ are associated with distinct topological entropies.
\begin{enumerate}[\bf a)]
  \item Suppose $\pi_{ij}: W^{(i)} \to W^{(j)}$ is a uniform factor. If
  $$
  \dim W^{(i)} = \frac{h_{\mu^{(i)}}(W^{(i)})}{\log m_i},
  $$
  then
  $$
  \dim W^{(j)} = \frac{h_{\mu^{(j)}}(W^{(j)})}{\log m_{j}} = \frac{h_{\pi\mu^{(i)}}(W^{(j)})}{\log m_j}.
  $$
  \item If $\phi^{(i)}$ has a synchronizing word, then there exists a factor map $\overline{\pi}: \mathcal{M}_{\max}(\mathbf{Y}^{(i)}) \to \mathcal{M}_{\max}(\mathbf{Y}^{(\overline{i})})$.

  \item If
  $$
  \dim \mathbf{Y}^{(i)} = \dfrac{h_{\nu^{(i)}}(\mathbf{Y}^{(i)})}{\log n_i},
  $$
  then
  $$
  \dim \mathbf{Y}^{(j)} = \dfrac{h_{\overline{\pi} \nu^{(i)}}(\mathbf{Y}^{(j)})}{\log n_j}.
  $$
\end{enumerate}
\end{enumerate}
\end{theorem}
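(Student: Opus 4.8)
The plan is to exploit the fact that the $n$-layer construction of Section 5 delivers exactly the same category of objects that drove the two-layer proofs, so that Theorems \ref{main-thm-FSE} and \ref{main-thm-ITO} carry over after relabelling the indices $1,2$ by $i,j$. Concretely, each $\mathbf{Y}^{(\ell)}$ is an irreducible sofic shift covered by an irreducible SFT $W^{(\ell)}$ through a right-resolving, finite-to-one factor $\phi^{(\ell)}$, with $W^{(\ell)}$ encoded by $T^{(\ell)}$ and $\mathbf{Y}^{(\ell)}$ by $S^{(\ell)}$; the factor $\pi_{ij}\colon W^{(i)}\to W^{(j)}$ and the irreducibility of the two covers are hypotheses. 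Because $\mathbf{Y}^{(i)}$ and $\mathbf{Y}^{(j)}$ are two sofic factors of the common SFT solution space $\mathbf{Y}$, exactly as $\mathbf{Y}^{(1)},\mathbf{Y}^{(2)}$ were, every structural ingredient---right-resolvingness, the synchronizing-word criterion of Lemma \ref{lem-ai-sync-word}, and the uniform contraction rate---survives the passage to general $n$.

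First I would settle Case I. Part (a) follows from Lemma \ref{lem-ai-sync-word}: a synchronizing word for $\phi^{(\ell)}$ makes it almost invertible, whence Theorem \ref{thm-MS-max2max} provides the bijection $\mathcal{M}_{\max}(W^{(\ell)})\leftrightarrow\mathcal{M}_{\max}(\mathbf{Y}^{(\ell)})$ together with $h_{\mu^{(\ell)}}(W^{(\ell)})=h_{\phi^{(\ell)}\mu^{(\ell)}}(\mathbf{Y}^{(\ell)})$. For part (b) I would apply Pesin's formula, Theorem \ref{thm-pesin-dim}: since the metric on $W^{(\ell)}$ uses the base $m_\ell=|\mathcal{A}(W^{(\ell)})|$, each one-sided cylinder of length $k+1$ has diameter exactly $m_\ell^{-(k+1)}$, so taking $\lambda_1=\cdots=\lambda_{m_\ell}=m_\ell^{-1}$ and $K_1=1,K_2=3$ verifies the hypotheses and yields $\dim W^{(\ell)}=h_{\mu^{(\ell)}}(W^{(\ell)})/\log m_\ell$; the formula for $\mathbf{Y}^{(\ell)}$ comes by replacing $\log m_\ell$ with $\log n_\ell$ and invoking the entropy identity of part (a). Part (c) is then immediate: irreducibility of $W^{(i)}$ forces $\mu^{(i)}$ to be unique, so the two bijections of part (a) compose into $\overline{\pi}\colon\mathcal{M}_{\max}(\mathbf{Y}^{(i)})\to\mathcal{M}_{\max}(\mathbf{Y}^{(j)})$ sending $\nu^{(i)}$ to $\nu^{(j)}$, giving the stated dimension identity.

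Case II mirrors Theorem \ref{main-thm-ITO}. For part (a) the uniformity of $\pi_{ij}$ sends the unique maximal $\mu^{(i)}$ to a maximal measure on $W^{(j)}$, which by irreducibility must be $\mu^{(j)}$; feeding $\pi_{ij}\mu^{(i)}=\mu^{(j)}$ into the Pesin formula for $W^{(j)}$ produces the relation between $\dim W^{(i)}$ and $\dim W^{(j)}$. For part (b) the synchronizing word on $\phi^{(i)}$ invertibly lifts maximal measures of $\mathbf{Y}^{(i)}$ to $W^{(i)}$, which are then transported by $\pi_{ij}$ and pushed down by the finite-to-one $\phi^{(j)}$ (entropy-preserving, hence maximality-preserving) to $\mathbf{Y}^{(j)}$; this composite is the advertised $\overline{\pi}$. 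Part (c) combines (a) and (b) with the same entropy bookkeeping used in Corollary \ref{cor-dim-inf2one}.

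The hard part will be the verification---omitted in Section 5 and deferred to \cite{BCLL-JDE2009}---that the general-layer reduction really does preserve the three features on which everything above hinges: right-resolvingness of each $\phi^{(\ell)}$ (so a magic word is automatically a synchronizing word), irreducibility of the covers, and the uniform cylinder diameter $m_\ell^{-(k+1)}$ underlying Theorem \ref{thm-pesin-dim}. Once these are in place the dimension bookkeeping is routine; the one genuinely delicate point is that in Case II the push-forward $\overline{\pi}\nu^{(i)}$ need not remain maximal unless $\pi_{ij}$ is uniform, so the whole of part (c) ultimately rests on the spectral-radius criterion $\rho_M=\rho^{(j)}/\rho^{(i)}$ of Theorem \ref{thm-uniform-iff-W} being checkable for the matrices $S^{(i)},S^{(j)}$ at hand.
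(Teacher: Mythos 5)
Your proposal is correct and follows essentially the same route as the paper: Section 5's proof consists precisely of observing that the $n$-layer construction reproduces the two-layer setup (irreducible SFT covers $W^{(\ell)}$, right-resolving finite-to-one $\phi^{(\ell)}$, matrices $T^{(\ell)}, S^{(\ell)}$), so the proofs of Theorems \ref{main-thm-FSE} and \ref{main-thm-ITO} — Lemma \ref{lem-ai-sync-word} plus Theorem \ref{thm-MS-max2max} for the measure correspondence, Theorem \ref{thm-pesin-dim} with $\lambda_1=\cdots=\lambda_{m_\ell}=1/m_\ell$ for the dimension formulas, and uniqueness of the maximal measure plus uniformity of $\pi_{ij}$ in the unequal-entropy case — carry over with the indices $1,2$ replaced by $i,j$. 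Your closing remarks about right-resolvingness, irreducibility, and the role of Theorem \ref{thm-uniform-iff-W} in Case II correctly identify the points the paper leaves implicit.
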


We conclude this section via the flow chart (cf.~Figure \ref{fig-flow-chart}), which explains Theorem \ref{main-thm-general} more clearly.

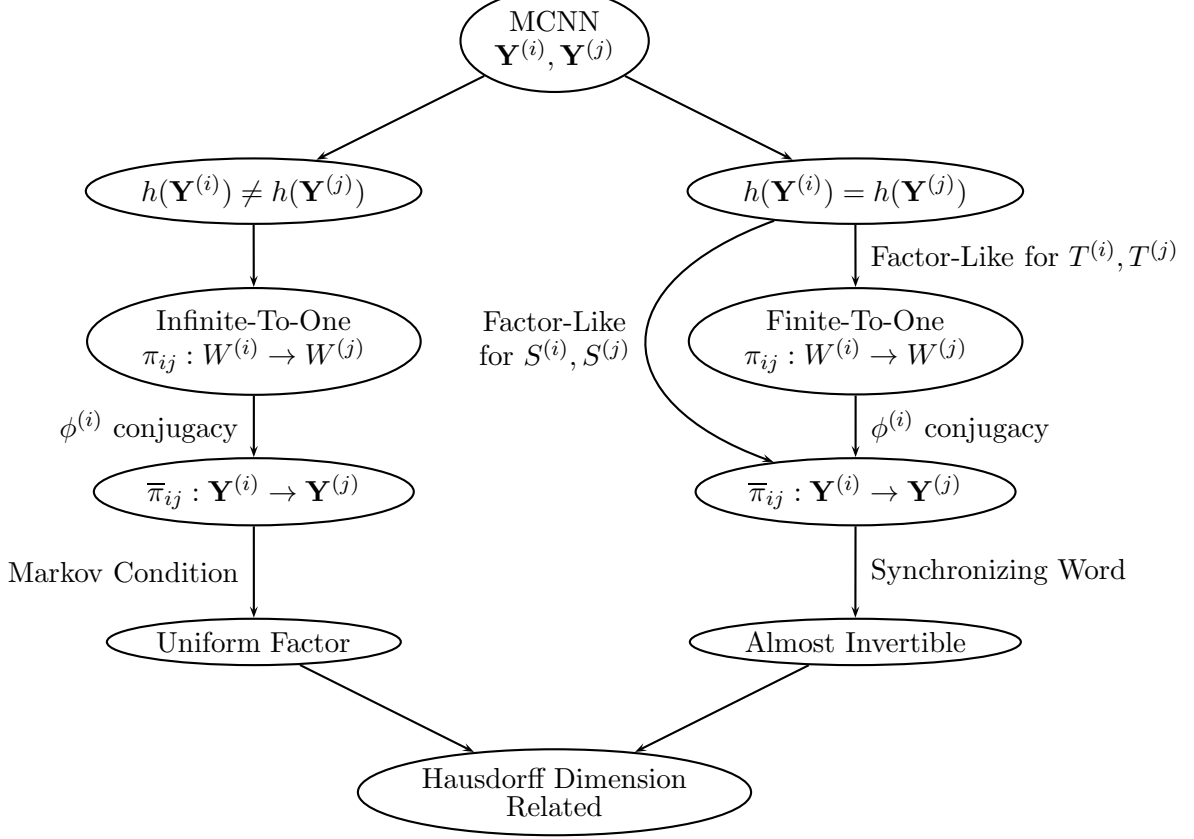
\begin{figure}
\begin{center}
\begin{pspicture}(9,12)
%\psset{nodesep=0.1cm}
\rput(5,12){\ovalnode{A}{\shortstack{MCNN \\ $\mathbf{Y}^{(i)}, \mathbf{Y}^{(j)}$}}}
\rput(1,10){\ovalnode{B}{$h(\mathbf{Y}^{(i)}) \neq h(\mathbf{Y}^{(j)})$}} \rput(9,10){\ovalnode{C}{$h(\mathbf{Y}^{(i)}) = h(\mathbf{Y}^{(j)})$}}
\rput(1,8){\ovalnode{D}{\shortstack{Infinite-To-One \\ $\pi_{ij}: W^{(i)} \to W^{(j)}$}}} \rput(9,8){\ovalnode{E}{\shortstack{Finite-To-One \\ $\pi_{ij}: W^{(i)} \to W^{(j)}$}}}
\rput(1,6){\ovalnode{F}{$\overline{\pi}_{ij}: \mathbf{Y}^{(i)} \to \mathbf{Y}^{(j)}$}} \rput(9,6){\ovalnode{G}{$\overline{\pi}_{ij}: \mathbf{Y}^{(i)} \to \mathbf{Y}^{(j)}$}}
\rput(1,4){\ovalnode{H}{Uniform Factor}} \rput(9,4){\ovalnode{I}{Almost Invertible}}
\rput(5,2){\ovalnode{J}{\shortstack{Hausdorff Dimension \\ Related}}}

\ncline{->}{A}{B}  \ncline{->}{A}{C}
\ncline{->}{B}{D}  \ncline{->}{C}{E}\Aput{Factor-Like for $T^{(i)}, T^{(j)}$}
\ncline{->}{D}{F}\Bput{$\phi^{(i)}$ conjugacy}  \ncline{->}{E}{G}\Aput{$\phi^{(i)}$ conjugacy}  \ncarc[arcangle=290,ncurv=1.5]{->}{C}{G}\Bput{\shortstack{Factor-Like \\ for $S^{(i)}, S^{(j)}$}}
\ncline{->}{F}{H}\Bput{Markov Condition}  \ncline{->}{G}{I}\Aput{Synchronizing Word}
\ncline{->}{H}{J}  \ncline{->}{I}{J}
%
%\rput(0,4){\ovalnode{A}{\Tn{8},\framebox{$\overset{\displaystyle{1,3,}}{5,7}$}}}
%\rput(4,8){\ovalnode{B}{\Tn{3},\framebox{$\overset{\displaystyle{1,3,}}{5,7}$}}}
%\rput(4,4){\ovalnode{C}{\Tn{\framebox{7,8}},\framebox{$\overset{\displaystyle{1,3,}}{5,7}$}}}
%\rput(4,1){\ovalnode{D}{\Tn{\framebox{5,6}},\framebox{$\overset{\displaystyle{2,4,}}{6,8}$}}}
%\rput(8,8){\ovalnode{E}{\Tn{6},\framebox{$\overset{\displaystyle{2,4,}}{6,8}$}}}
%\rput(8,4){\ovalnode{F}{\Tn{\framebox{3,4}},\framebox{$\overset{\displaystyle{1,3,}}{5,7}$}}}
%\rput(8,1){\ovalnode{G}{\Tn{\framebox{1,2}},\framebox{$\overset{\displaystyle{2,4,}}{6,8}$}}}
%\rput(12,4){\ovalnode{H}{\Tn{1},\framebox{$\overset{\displaystyle{2,4,}}{6,8}$}}}
%\ncline{->}{A}{C}\ncline{->}{A}{D}\ncline{->}{B}{A}\ncline{->}{C}{D}
%\nccurve[angleA=60,angleB=120,ncurv=5]{->}{C}{C}\nccurve[angleA=-30,angleB=30,ncurv=5.5]{->}{G}{G}
%\ncline{->}{F}{C}\ncline{->}{D}{F}\ncline{->}{D}{G}\ncline{->}{D}{F}
%\ncline{->}{G}{F}\ncline{->}{H}{F}\ncline{->}{E}{H}\ncline{->}{H}{F}\ncline{->}{F}{E}
%\ncarc[arcangle=20]{->}{B}{E} \ncarc[arcangle=20]{->}{E}{B}
\end{pspicture}
\end{center}
\caption{The flow chart of the existence of factor maps for arbitrary two hidden spaces.}
\label{fig-flow-chart}
\end{figure}

% -------------------------------------------------------------
\section{Conclusion and Further Discussion}

This investigation elucidates whether there is a factor map $\pi$ (respectively $\overline{\pi}$) connecting $W^{(i)}$ and $W^{(j)}$ (respectively $\mathbf{Y}^{(i)}$ and $\mathbf{Y}^{(j)}$). If a factor map does exist, the push-forward measure of a maximal measure is also a maximal measure provided the factor map is either finite-to-one or uniform. Moreover, the Hausdorff dimension of two spaces is thus related. Topological entropy provides a media to make the discussion more clear. %A brief flow chart follows as the table below.

%\begin{center}
%\begin{tabular}{c|cc}
%  \hline
%  % after \\: \hline or \cline{col1-col2} \cline{col3-col4} ...
%    & $h(\mathbf{Y}^{(1)}) = h(\mathbf{Y}^{(2)})$ & $h(\mathbf{Y}^{(1)}) \neq h(\mathbf{Y}^{(2)})$ \\
%  \hline
%  Existence of Factor & Factor-Like Matrix & Factor Periodic Point Condition \\
%  $\mathcal{M}_{\max}\leftrightarrow\mathcal{M}_{\max}$ & Synchronizing Word & Uniform Factor \\
%  Dimension Related & Yes & Yes \\
%  \hline
%\end{tabular}
%\end{center}

When $\mathbf{Y}^{(i)}$ and $\mathbf{Y}^{(j)}$ are FSE, the existence of a factor-like matrix asserts the existence of factor map $\overline{\pi}$. With the assistance of computer programs we can rapidly determine if there exists a factor-like matrix for a given MCNN. Moreover, the factor map $\overline{\pi}$ can be expressed in an explicit form. For most of the cases, there is no factor-like matrix for $\mathbf{Y}^{(i)}$ and $\mathbf{Y}^{(j)}$.

\begin{problem}
Suppose there is a factor map between $\mathbf{Y}^{(i)}$ and $\mathbf{Y}^{(j)}$. Is $\dim \mathbf{Y}^{(i)}$ related to $\dim \mathbf{Y}^{(j)}$? Or, equivalently, is there a one-to-one correspondence between $\mathcal{M}_{\max}(\mathbf{Y}^{(i)})$ and $\mathcal{M}_{\max}(\mathbf{Y}^{(j)})$?
\end{problem}

A partial result of the above problem is the existence of synchronizing words. Lemma \ref{lem-ai-sync-word} demonstrates that, if $\phi^{(i)}$/$\phi^{(j)}$ has a synchronizing word, then $\phi^{(i)}$/$\phi^{(j)}$ is almost invertible. This infers a one-to-one correspondence between $\mathcal{M}_{\max}(\mathbf{Y}^{(i)})$ and $\mathcal{M}_{\max}(\mathbf{Y}^{(j)})$.

\begin{problem}
How large is the portion of almost invertible maps in the collection of factor maps?
\end{problem}

If $h(\mathbf{Y}^{(i)}) \neq h(\mathbf{Y}^{(j)})$, on the other hand, we propose a criterion for the existence of factor maps. We will not find the explicit form of the factor map.

\begin{problem}
Can we find some methodology so that we can write down the explicit form of a factor map if it exists?
\end{problem}

For the case where $h(\mathbf{Y}^{(i)}) \neq h(\mathbf{Y}^{(j)})$, a uniform factor provides the one-to-one correspondence between the maximal measures of two spaces. When the Markov condition is satisfied, Theorem \ref{thm-uniform-iff-W} indicates an if-and-only-if criterion. Notably we can use Theorem \ref{thm-uniform-iff-W} only if the explicit form of the factor map is found. Therefore, the most difficult part is the determination of a uniform factor.

\begin{problem}
How to find, in general, a uniform factor?
\end{problem}

% bibliography ---------------------------------------------------
\bibliographystyle{amsplain}%amsplain, amsalpha, abbrvnat, alpha, ieeetr, abbrv, unsrt, acm, plainnat, plain, siam
\bibliography{../../../grece}

% -------------------------------------------------------------
\end{document}